\documentclass[11pt,reqno]{amsart}
\allowdisplaybreaks

\usepackage[english]{babel}
\usepackage[latin1]{inputenc}
\usepackage[T1]{fontenc}
\usepackage{amssymb,amsmath,amstext,amsfonts,amsthm}
\usepackage{mathtools}
\usepackage{verbatim}
\usepackage{relsize}
\usepackage{graphicx,color}
\usepackage[dvipsnames]{xcolor}
\usepackage[all]{xy}
\usepackage[colorlinks=true, urlcolor=blue, linkcolor=blue, citecolor=blue]{hyperref}
\usepackage{fancyvrb}
\usepackage{tikz-cd}
\usepackage{comment}
\usepackage{hhline}
\usepackage{diagbox}
\usepackage{epigraph}
\usepackage[left=25mm, right=25mm, top=30mm, bottom=30mm]{geometry}
\usepackage{ytableau}
\usepackage{cleveref}
\usepackage{thm-restate}
\usepackage{setspace}
\usepackage{indentfirst}
\numberwithin{equation}{section}

\theoremstyle{plain}
\newtheorem{Theorem}{Theorem}[section]
\newtheorem*{Theorem*}{Theorem}
\newtheorem{Corollary}[Theorem]{Corollary}
\newtheorem{Lemma}[Theorem]{Lemma}
\newtheorem{Proposition}[Theorem]{Proposition}
\newtheorem{Conjecture}[Theorem]{Conjecture}
\newtheorem{Algorithm}[Theorem]{Bott's Algorithm}

\theoremstyle{definition}
\newtheorem{Definition}[Theorem]{Definition}

\newtheorem*{Notation}{Notation}

\theoremstyle{remark}
\newtheorem{Remark}[Theorem]{Remark}

\newcommand{\C}{\mathbb{C}}
\newcommand{\NN}{\mathbb{N}}

\newcommand{\Z}{\mathbb{Z}}
\newcommand{\PP}{\mathbb{P}}
\newcommand{\F}{\mathbb{F}}
\newcommand{\R}{\mathbb{R}}
\newcommand{\mR}{\mathsmaller{\mathbb{R}}}

\newcommand{\xx}{\textbf{x}}
\newcommand{\yy}{\textbf{y}}

\newcommand{\nn}{\textbf{n}}

\newcommand{\sE}{\mathcal{E}}

\newcommand{\sI}{\mathcal{I}}
\newcommand{\sS}{\mathcal{S}}
\newcommand{\sF}{\mathcal{F}}

\newcommand{\sQ}{\mathcal{Q}}

\newcommand{\sO}{\mathcal{O}}
\newcommand{\SL}{\mathrm{SL}}

\newcommand{\ser}{\mathcal E^{(r)}}

\makeatletter
\@namedef{subjclassname@2020}{%
	\textup{2020} Mathematics Subject Classification}
\makeatother

\usepackage{tcolorbox}
\newtcolorbox{mybox}{colback=blue!5!white,colframe=blue!75!black}

\subjclass[2020]{14N07; 15A18; 15A69.}
\keywords{Tensors, eigenscheme, singular vector tuples, critical space, Koszul cohomology}

\title{On defective spans of singular vector tuples beyond the boundary format}

\author[E.T. Turatti]{Ettore Teixeira Turatti}
\address[E.T. Turatti]{Faculty of Mathematics, Informatics, and Mechanics, University of Warsaw, Banacha 2, 02-097 Warsaw, Poland}
\email{e.teixeira-turatti@uw.edu.pl}

\author[E. Ventura]{Emanuele Ventura}
\address[E. Ventura]{Politecnico di Torino, Dipartimento di Scienze Matematiche ``G. L. Lagrange'', Corso Duca degli Abruzzi 24, 10129 Torino, Italy}
\email{emanuele.ventura@polito.it}

\begin{document}

\maketitle

\begin{abstract}
In this paper, we study tensor spaces beyond the boundary format and analyze whether the general critical space coincides with the general span of singular vector tuples. For all tensor spaces exceeding the boundary format by one in an arbitrary number of factors, we relate the codimension of this span within the critical space to the dimension of the kernel of a map in cohomology. Furthermore, we exhibit an infinite family of order-three tensors with a {\it defective} behavior: the general span of singular vector tuples achieves the maximum possible codimension rather than the expected minimum. Finally, we conjecture a classification of the behavior of critical spaces in this regime and draw a connection to Koszul cohomology. 
\end{abstract}

\section{Introduction}\label{sec: intro}

The spectral theory of tensors is a deep subject both in its applied and algebraic geometry aspects. A fundamental notion in this setting is that of 
a {\em singular vector $k$-tuple} of an order-$k$ tensor; this is the generalization of the notion of singular pair of a rectangular matrix. Singular vector $k$-tuples retain important properties of singular pairs. For instance, in the problem of minimizing the distance between a target rank-one tensor and a given tensor $T$, the singular vector $k$-tuples of $T$ correspond to the constrained critical points of the distance function. Therefore, they essentially provide an answer to the so-called {\em best rank-one approximation problem} for $T$ \cite{L}.

Beyond approximation problems, singular vector $k$-tuples have been studied from several geometric perspectives. In particular, their relation to tensor decompositions: in the symmetric setting, \cite{HT25} investigates when a form admits a minimal decomposition via its singular vector $k$-tuples, while \cite{RHST25} extends this question to the general setting, including non-minimal decompositions. Another direction of research is exploring structural properties such as orthogonality \cite{RSZ}, and power methods algorithms for decomposing tensors \cite{WPKS}.

One of the fundamental properties of singular pairs of matrices is that they yield a minimal decomposition as a sum of rank-one matrices: the singular value decomposition (SVD). However, this property does not generalize to singular vector $k$-tuples of higher-order tensors. In view of this fundamental difference, Ottaviani and Paoletti \cite[Section 5.2]{OP} introduced the \emph{singular space} $H_T$ of a tensor $T$ -- later called the \emph{critical space} \cite[Definition 2.8]{DOT} --  as the vanishing locus of the equations defining singular vector $k$-tuples, however not restricted to rank-one tensors. An important feature of this space is that $T\in H_T$ and $\langle Z_T\rangle\subset \PP(H_T)$, where $Z_T$ is the variety of singular vector $k$-tuples of $T$; see Definition \ref{def:Z_T}. Draisma, Ottaviani and Tocino showed that, when the tensor space has boundary format (see Definition \ref{def: boundary format}), the equality $\langle Z_T\rangle= \PP(H_T)$ holds \cite[Theorem 1.1]{DOT}. As a consequence, $T$ may be decomposed (perhaps in a non-minimal way) as a sum of its singular vector $k$-tuples. This is a generalization of the celebrated Eckart-Young theorem for matrices. 

The boundary format has an important geometric meaning: it corresponds to the cases where the Segre variety is dual defective, i.e., the dual variety is not a hyperplane, see \cite[Corollary 5.11]{GKZ}. Moreover, in the context of \cite{DOT}, this format is a technical condition that guarantees the vanishing of cohomologies of the bundle $\bigwedge^r \mathcal E^\ast\otimes \mathcal O(d_1,\dots,d_k)$, where $\mathcal E$ is the \emph{Friedland-Ottaviani bundle}; see \Cref{def:FO-bundle}. In \cite{ST22}, the authors explore what happens when the boundary format is disregarded for almost binary tensors. Although the vanishing results do not hold, the authors show that  for almost binary tensors, up to a finite list of cases, one still has the equality $\langle Z_T\rangle=\PP(H_T)$ \cite[Theorem 5.7]{ST22}. Moreover, \cite[Theorem 6.6]{ST22} shows that it is sufficient that $T\in \langle Z_T\rangle$ for a generic tensor $T$ to be determined by its singular vector $k$-tuples, thus extending \cite[Theorem 1.2]{turatti2021tensors} which also relied on the assumption of boundary format.

In this work, motivated by the quest of getting rid of the boundary format hypothesis, we focus on the case when $V=\C^{n_1+1}\otimes \dots \otimes \C^{n_k+1}$, and $n_k=1+\sum_{i=1}^{k-1} n_i$, i.e., the minimal case beyond boundary format. We perform a thorough analysis of the cohomological vanishings required via Bott-Borel-Weil theorem, and show that the picture is peculiar for the case of three-factor tensors ($k=3$), where there are fewer vanishings, while for higher-order ($k\geq 4$), the vanishings are uniform, see Propositions \ref{lemma: gen coh 1}, \ref{lemma: gen coh 2} and \ref{lemma: gen coh 3}. 

In \Cref{prop:dim k+1-factors} and \Cref{prop: dim k=3 alpha} we show that the codimension of $\langle Z_T\rangle$ in $\PP(H_T)$ amounts to the dimension of the kernel of a map of cohomologies $\alpha_T$. This map is later spelled out in \Cref{cor:spelloutalpha}, formalizing some steps in \cite{ST22} which were not explicit. In \Cref{sec:inequality} we show that in only few cases this map cannot be injective due to dimensional count, these include the cases where $\langle Z_T\rangle\subsetneq \PP(H_T)$ in \cite{ST22}. The map $\alpha_T$ is full-rank in all experiments we performed, besides one specific family. Our main result actually identifies a family where the map $\alpha_T$ vanishes identically, and $\langle Z_T\rangle$ has the maximal codimension possible. Namely:
 \begin{restatable*}{Theorem}{mainthm}\label{Thm: (2,n,n+2)}
    Let $T\in V=\C^2\otimes \C^n\otimes \C^{n+2}$ be a general tensor.  Then $\langle Z_T\rangle\subset \PP(H_T)$ has codimension $n-1$.
\end{restatable*}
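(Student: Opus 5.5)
We will read off the codimension as the dimension of the kernel of the cohomology map $\alpha_T$. In the notation of the introduction, the format is $n_1=1$, $n_2=n-1$, $n_3=n+1=1+n_1+n_2$, so $V=\C^2\otimes\C^n\otimes\C^{n+2}$ is exactly the minimal case beyond boundary format with $k=3$. By \Cref{prop: dim k=3 alpha}, the codimension of $\langle Z_T\rangle$ in $\PP(H_T)$ equals $\dim\ker\alpha_T$. The proof therefore has two parts. First, we identify the source of $\alpha_T$ and check that it has dimension $n-1$. Second, we show that $\alpha_T$ vanishes identically for general $T$. This gives codimension $n-1$, which is also the maximal possible value announced in \Cref{sec:inequality}.

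\textbf{Step 1 (the source).} We go through the Koszul resolution of $Z_T$ as the zero locus of a section of the Friedland--Ottaviani bundle $\mathcal E$ on $X=\PP^1\times\PP^{n-1}\times\PP^{n+1}$. Using Propositions \ref{lemma: gen coh 1}--\ref{lemma: gen coh 3}, we isolate the unique nonvanishing term obstructing surjectivity of $H^0(\mathcal O_X(1,1,1))\to H^0(\mathcal O_{Z_T}(1,1,1))$. Bott's algorithm should give a non-vanishing term for $k=3$. The natural candidate is an $H^{\bullet}$ of $\bigwedge^r\mathcal E^\ast(1,1,1)$ that, via Bott, becomes a representation built from the $\C^2$ factor and the $(n+2)$-dimensional factor, with dimension $n-1$. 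The dimension of the target is handled in the same way. The exact computation is routine once we read off the non-dominant weights from the lemmas.

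\textbf{Step 2 ($\alpha_T\equiv 0$).} This is the main obstacle. Using the explicit description in \Cref{cor:spelloutalpha}, $\alpha_T$ is a contraction with $T$ composed with the connecting maps of the Koszul complex, so it is linear (or multilinear) in $T$ and $\SL_2\times\SL_n\times\SL_{n+2}$-equivariant. We first try an equivariance argument. Writing $\alpha_T=\Phi(T)$ for an equivariant map $\Phi\colon V\to\mathrm{Hom}(\text{source},\text{target})$, we decompose both sides by Bott--Borel--Weil into irreducibles. If $V$ (or the relevant $\mathrm{Sym}^d V$, if $T$ enters with degree $d$) has no common irreducible constituent with $\mathrm{Hom}(\text{source},\text{target})$, then Schur's lemma forces $\Phi=0$. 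We expect a mismatch of this kind, for example in the $\SL_2$-weight or the $\SL_{n}$-weight, to be precisely what distinguishes the $(2,n,n+2)$ family.

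\textbf{Fallback for Step 2.} If the representations do overlap, we instead compute $\alpha_T$ on a convenient dense set of tensors. We view $T$ as a pencil of $n\times(n+2)$ matrices, which by Kronecker theory is generically equivalent to a sum of two $1\times 2$ (or similar) minimal-index blocks. On this normal form we verify directly from \Cref{cor:spelloutalpha} that the composite is zero. Vanishing on a dense orbit then gives vanishing for general $T$.

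Combining the two steps, $\dim\ker\alpha_T=n-1$ for general $T$, and \Cref{prop: dim k=3 alpha} yields the theorem.
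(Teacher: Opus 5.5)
There is a genuine gap in your very first reduction. \Cref{prop: dim k=3 alpha} is proved only for $\C^{a+1}\otimes\C^{b+1}\otimes\C^{a+b+2}$ with $a,b\ge 2$. The paper treats $n_1=1$ separately precisely because the cohomology changes there. When one factor is $\PP^1$, Propositions \ref{lemma: gen coh 2}(ii) and \ref{lemma: gen coh 3}(iii) give a nonzero group for \emph{every} odd $r$ with $3\le r\le n$:
\[
H^r(\sE^{(r)})\cong H^1(\Omega^1_{\PP^1}(3-r))\otimes H^{\frac{r-1}{2}}(\Omega^{\frac{r-1}{2}}_{\PP^{n-1}})\otimes H^{\frac{r-1}{2}}(\Omega^{\frac{r-1}{2}}_{\PP^{n+1}})\cong \sS^{r-3}\C^2 .
\]
Consider the chain (\Cref{cor:chains k=3}) running from $H^1(\sF_2({\bf 1}))$, whose dimension is the codimension you want, up to $H^n(\sF_{n+1}({\bf 1}))\cong H^n(\sE^{(n+1)})\cong\sS^{n-2}\C^2$. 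At each such odd $r$ it contains a possibly proper inclusion $H^{r-1}(\sF_r({\bf 1}))\subset H^r(\sF_{r+1}({\bf 1}))$. Its cokernel is the image of $H^r(\sF_{r+1}({\bf 1}))\to H^r(\sE^{(r)})$. For $a,b\ge 2$ there is only one such level ($r=3$, target $\C$); here there are about $n/2$. Controlling a single map out of $H^n(\sE^{(n+1)})$ handles only the top step, so by itself it gives just the upper bound $n-1$.

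Worse, the map you plan to show vanishes is zero for trivial reasons. For $(n_1,n_2)=(1,n-1)$ the formula of \Cref{cor:spelloutalpha} has target $\sS^{n-3}\C^2\otimes\sS^{-1}\C^n\otimes\C^{n+2}=0$. Consistently, the summand in \Cref{lemma: gen coh 3}(ii) vanishes for $r=n$ when $n_1=1$. So ``$\alpha_T\equiv 0$'' carries none of the content. For the same reason, the ``unique nonvanishing term'' of your Step 1 does not exist. Also, the relevant obstruction is the cokernel $H^1(\sF_2({\bf 1}))$ of $H^0(\sE^{(1)})\to H^0(\sI_{Z_T}({\bf 1}))$, not the failure of $H^0(\sO({\bf 1}))\to H^0(\sO_{Z_T}({\bf 1}))$ to be surjective.

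What is missing is a proof that \emph{all} these intermediate maps vanish. The paper does this by descending induction over odd $r$. It starts at $r=n$ for $n$ odd, and at $r=n-1$ for $n$ even, where the top step is already an isomorphism; the cases $n=2,3$ are quoted from \cite{ST22}. Once the maps above level $r$ are zero, $H^r(\sF_{r+1}({\bf 1}))\cong\sS^{n-2}\C^2$. Letting $T$ vary turns the map $H^r(\sF_{r+1}({\bf 1}))\to H^r(\sE^{(r)})$ into an $\SL_2\times\SL_n\times\SL_{n+2}$-equivariant map. Its source carries the nontrivial factor $(\C^n)^*\otimes(\C^{n+2})^*$ coming from $V^*$, while the target $\sS^{r-3}\C^2\otimes\C\otimes\C$ is trivial for $\SL_n\times\SL_{n+2}$, so Schur's lemma kills it. Then every inclusion is an equality, $h^1(\sF_2({\bf 1}))=n-1$, and the sequence $0\to H^0(\sE^{(1)})\to H^0(\sI_{Z_T}({\bf 1}))\to H^1(\sF_2({\bf 1}))\to 0$ gives the codimension.

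So your Schur-lemma idea is the right tool, but it must be applied at each odd level. The decisive mismatch is in the $\SL_n\times\SL_{n+2}$-factor, not in $\SL_2$: at $r=n$ the constituent $\sS^{n-3}\C^2$ does occur in the source. Your allowance for $T$ entering with degree $d$ is harmless here, since $\sS^dV^*$ has no $\SL_n$-invariants for $0<d<n$.

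The Kronecker fallback does not repair the gap either. It evaluates the same vacuous formula. Moreover, a general pencil of $n\times(n+2)$ matrices is $L_{\epsilon_1}\oplus L_{\epsilon_2}$ with $\epsilon_1+\epsilon_2=n$ and $|\epsilon_1-\epsilon_2|\le 1$, not a sum of $1\times 2$ blocks unless $n=2$. Step 1 is only sketched, though its conclusion
\[
H^n(\sE^{(n+1)})\cong H^1(\sO_{\PP^1}(-n))\otimes H^{n-1}(\sO_{\PP^{n-1}}(-n))\otimes H^0(\Omega^{n+1}_{\PP^{n+1}}(n+2))\cong\sS^{n-2}\C^2
\]
is correct.
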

\noindent This gives a {\it defective} case for $\langle Z_T\rangle$, i.e., its dimension is lower than the maximal possible from parameter counting. Apart from this case, we conjecture that $\alpha_T$ has maximal rank. Equivalently, we conjecture that $\langle Z_T\rangle= \PP(H_T)$, up to finitely many cases; see Conjectures \ref{conj:2} and \ref{conj:3}. In Section \ref{sec:Koszul coh approach}, we draw a connection between these conjectures and the vanishing of certain Koszul cohomology modules.

\medskip

\noindent {\bf Acknowledgements.} 
This project started during the semester program Algebraic Geometry with Applications to Tensors and Secants (AGATES), held in Warsaw in 2022. We warmly thank Claudiu Raicu for explanations and discussions that led to write \Cref{sec:Koszul coh approach}. We warmly thank Joachim Jelisiejew, Luca Sodomaco and Jerzy Weyman for useful discussions at various stages of this project. E.T.T. acknowledges support by the National Science Center, Poland, under the project ``Tensor rank and its applications to signature tensors of path'', 2023/51/D/ST1/02363. E.V. is a member of the research group GNSAGA of INdAM and acknowledges that the preparation of this article was partially funded by the Italian Ministry of University and Research, through the project {\it Applied Algebraic Geometry of Tensors}, PRIN 2022 Protocol no. 2022NBN7TL.

\section{Tensors and singular vector tuples}\label{sec: prelim}

\begin{Notation}
We use $[k]$ to denote the set of indices $\{1,\ldots,k\}$. We denote by $\xx$ a vector $(x_1,\ldots,x_k)$ of $k$ variables and $|{\xx}|\coloneqq x_1+\cdots+x_k$. Define ${\bf 1} = (1,\ldots,1)\in \NN^k$ and, for $m\in \NN$, let $m{\bf 1} = (m,\ldots, m)\in \NN^k$. 
\end{Notation}

For every $i\in[k]$ we consider an $(n_i+1)$-dimensional vector space $V_i$ over the field $\F=\R$ or $\F=\C$. When $\F=\R$, we will use the notation $V_i^\mR$. On the other hand, $V_i$ is be assumed to be over $\C$. We denote by $V$ the tensor product $\bigotimes_{i=1}^k V_i$. This is the vector space of {\em order $(k+1)$ tensors of format $\nn=(n_1+1,\ldots,n_k+1)$}.

\begin{Definition}\label{def: Segre embedding}
A tensor $T\in V$ is of {\it rank one} if $T = \xx_1\otimes\cdots\otimes \xx_k$ for some non-zero vectors $\xx_{j}\in V_j$ for all $j\in[k]$. Rank-one tensors are parametrized by the Segre variety of format $n_1\times\dots\times n_k$, that is, the image of the projective morphism
$$
\mathrm{Seg}\colon \PP(V_1)\times \cdots \times \PP(V_k)\to\PP(V),\ \left([\xx_1],\dots,[\xx_k]\right)\mapsto [\xx_1\otimes\dots\otimes\xx_k].
$$

\end{Definition}

Throughout the paper, we adopt the shorthand $\PP=\PP(V_1)\times \cdots \times \PP(V_k)$ to indicate the Segre variety. Moreover, we abuse notation identifying a tensor $T\in V$ with its class in the projective space $\PP(V)$. On each projective space $\PP(V_i)$ we fix a quadric $q_i\in\C[x_{i,1},\ldots,x_{i,n_i+1}]_2$ associated to a positive definite real quadratic form $q_i^\mR\colon V_i^\mR\to\R$.

\begin{Definition}\label{def: Frobenius inner product}
The {\it Frobenius (or Bombieri-Weyl) inner product} of two complex decomposable tensors $T = \xx_1\otimes\cdots\otimes \xx_k$ and $T' = \yy_1\otimes\cdots\otimes \yy_k$ of $V$ is
\begin{equation}\label{eq: Frobenius inner product for tensors}
q_F(T, T')\coloneqq q_1(\xx_1, \yy_1)\cdots q_k(\xx_k, \yy_k)\,,
\end{equation}
and it is naturally extended by linearity to every vector in $V$.
We identify all the vector spaces with their duals using the Frobenius inner product.
\end{Definition}

\begin{Definition}\label{def: singular vector tuple}
Let $T\in V$. A {\em singular vector $k$-tuple} of $T$ is a $k$-tuple $(\xx_1,\ldots,\xx_k)$ of nonzero vectors $\xx_i\in V_i$ such that
\begin{equation}\label{eq: def singular vector tuple matrix}
\mathrm{minors}_{2\times2}
\begin{bmatrix}
T(\xx_1\otimes\cdots\otimes \xx_{i-1}\otimes \xx_{i+1}\otimes\cdots\otimes \xx_k)\\
\xx_i
\end{bmatrix}=0
\quad\forall\,i\in[k]\,,
\end{equation}
where
\begin{equation}\label{eq: def contraction}
T(\xx_1\otimes\cdots\otimes \xx_{i-1}\otimes \xx_{i+1}\otimes\cdots\otimes \xx_k)\coloneqq\sum_{j_\ell\in[n_\ell]}t_{j_1\cdots\,j_i\cdots\,j_k}\,x_{1,j_1}\cdots \widehat{x_{i,j_i}}\cdots x_{k,j_k}
\end{equation}
is the tensor contraction of $T=(t_{j_1,\ldots,j_k})$ with respect to $\xx_1\otimes\cdots\otimes \xx_{i-1}\otimes \xx_{i+1}\otimes\cdots\otimes \xx_k$. The symbol $\widehat{x_{i,j_i}}$ in \eqref{eq: def contraction} means that the variable $x_{i,j_i}$ is omitted in the product. 
If we interpret $T$ as a multi-homogeneous polynomial in the coordinates of each vector $\xx_i$, then the previous tensor contraction corresponds to the gradient $\nabla_iT$ with respect to the vector $\xx_i=(x_{i,1},\ldots,x_{i,n_i+1})$.
\end{Definition}

The Euclidean Distance degree (ED-degree) of the Segre variety is defined as the number of critical points of the squared distance function \( q_F(T - x, T - x) \), where \(T\) is a general tensor and \(x\) ranges over the Segre variety. It was shown in \cite{L} that these critical points coincide with the singular vector \(k\)-tuples of \(T\). Consequently, the ED-degree amounts to the number of singular vector \(k\)-tuples of a general tensor.

\begin{Theorem}{\cite[Theorem 1]{FO}}\label{thm: FO formula}
Let $T\in V$ be a general tensor. Then $T$ has exactly $\mathrm{ED}(\nn)$ simple singular vector
tuples, where $\mathrm{ED}(\nn)$ equals the coefficient of the monomial $h_1^{n_1}\cdots h_k^{n_k}$ in the polynomial
\[
\prod_{i=1}^k\frac{\widehat{h}_i^{n_i+1}-h_i^{n_i+1}}{\widehat{h}_i-h_i},\quad\widehat{h}_i\coloneqq\sum_{j\neq i}^k h_j\,.
\]
\end{Theorem}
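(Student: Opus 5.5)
The plan is to realize singular vector $k$-tuples as the zero locus of a section of a vector bundle of rank $\dim \PP$ on $\PP=\PP(V_1)\times\cdots\times\PP(V_k)$. Then count the zeros by the top Chern class, and use a genericity argument to show that for general $T$ all zeros are simple.

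\emph{Step 1 (the bundle).} On $\PP(V_i)$ let $Q_i = (V_i\otimes\sO)/\sO(-1)$ be the tautological quotient bundle, of rank $n_i$. Write $e_i$ for the $i$-th standard basis vector of $\Z^k$, so that $\sO({\bf 1}-e_i)$ has degree $0$ in the $i$-th factor and degree $1$ in all others. Set
\[
\sE=\bigoplus_{i=1}^k \pi_i^*Q_i\otimes \sO({\bf 1}-e_i),
\]
which has rank $\sum_i n_i=\dim\PP$. The contraction $\nabla_i T$ of \eqref{eq: def contraction} is multilinear of degree ${\bf 1}-e_i$ and takes values in $V_i$. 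It therefore defines a section of $V_i\otimes\sO({\bf 1}-e_i)$, and projecting to $Q_i$ gives a section $s_{T,i}$ of the $i$-th summand. At a point $([\xx_1],\dots,[\xx_k])$, the section $s_{T,i}$ vanishes exactly when $\nabla_iT$ is proportional to $\xx_i$. This is precisely the vanishing of the $2\times 2$ minors in \eqref{eq: def singular vector tuple matrix}. Hence the zero scheme of $s_T=(s_{T,1},\dots,s_{T,k})$ is the set of singular vector $k$-tuples of $T$, and the assignment $T\mapsto s_T$ is a linear map $V\to H^0(\sE)$.

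\emph{Step 2 (genericity).} I will show that the sections $\{s_T\}_{T\in V}$ generate every fibre of $\sE$. Fix a point $([\xx_1],\dots,[\xx_k])$ and a target vector $(w_1,\dots,w_k)$ in the fibre. Complete each $\xx_j$ to a basis of $V_j$. A tensor of the form $\sum_i \xx_1\otimes\cdots\otimes \tilde w_i\otimes\cdots\otimes\xx_k$, where $\tilde w_i$ is a lift of $w_i$, then realizes the prescribed values. The only cross-term is a multiple of $\xx_1\otimes\cdots\otimes\xx_k$, which dies in the quotients $Q_i$. Consider the incidence variety $\{(T,p): s_T(p)=0\}\subset V\times\PP$. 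Because of fibrewise generation it is a vector bundle over $\PP$, hence smooth and irreducible of dimension $\dim V$. By generic smoothness, its projection to $V$ has finite, reduced general fibres. Equivalently, for general $T$ the zero locus of $s_T$ consists of finitely many simple points, and their number is $\int_\PP c_{\rm top}(\sE)$. I expect this to be the main point requiring care. In particular, one must check that the projection is dominant, i.e.\ that the top Chern class is nonzero; otherwise general $T$ would have no zeros and the count would be $0$, which is still consistent with the formula.

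\emph{Step 3 (Chern computation).} Let $h_i$ be the hyperplane class pulled back from $\PP(V_i)$. Then $c(Q_i)=1/(1-h_i)$, so $c_j(Q_i)=h_i^j$. The line bundle $\sO({\bf 1}-e_i)$ has first Chern class $\widehat h_i$. For a rank-$n_i$ bundle twisted by a line bundle,
\[
c_{n_i}(Q_i\otimes L)=\sum_{j=0}^{n_i} c_j(Q_i)\,c_1(L)^{n_i-j},
\]
so
\[
c_{n_i}\bigl(Q_i\otimes\sO({\bf 1}-e_i)\bigr)=\sum_{j=0}^{n_i}h_i^j\widehat h_i^{\,n_i-j}=\frac{\widehat h_i^{n_i+1}-h_i^{n_i+1}}{\widehat h_i-h_i}.
\]
The top Chern class of $\sE$ is the product of these factors over $i$. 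Integrating over $\PP$ extracts the coefficient of $h_1^{n_1}\cdots h_k^{n_k}$, which is $\mathrm{ED}(\nn)$.
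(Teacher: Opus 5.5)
\textbf{Gap in Step 2.} Steps 1 and 3 are correct, and the route (zero scheme of $s_T\in H^0(\sE)$, counted by $c_d(\sE)$) is the standard Friedland--Ottaviani argument that the paper only cites from \cite{FO}. The gap is the claim in Step 2 that the sections $s_T$, $T\in V$, generate every fibre of $\sE$. Over $\C$ this is false.

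For every $i$ one has the identity $q_i(\nabla_iT(\xx),\xx_i)=T(\xx_1,\ldots,\xx_k)$, the full contraction. If $q_i(\xx_i,\xx_i)=0$, the functional $[v]\mapsto q_i(v,\xx_i)$ is well defined on $V_i/\langle\xx_i\rangle$. So at a point where two components $\xx_i,\xx_j$ are isotropic, the image of $V$ in the fibre lies in the hyperplane where these two functionals agree on the $i$-th and $j$-th components.

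Take $2\times 2$ matrices at $p=([1:\sqrt{-1}],[1:\sqrt{-1}])$, and identify $\C^2/\langle(1,\sqrt{-1})\rangle\cong\C$ via $v\mapsto v_1+\sqrt{-1}\,v_2$. Then both components of $s_A(p)$ equal $a_{11}-a_{22}+\sqrt{-1}(a_{12}+a_{21})$, so $V\to\sE_p\cong\C^2$ has rank $1$.

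Your explicit tensor hides this. Its $i$-th component at the point is $\prod_{l\neq i}q_l(\xx_l,\xx_l)\,[\tilde w_i]$, not $[\tilde w_i]$, and it vanishes as soon as another $\xx_l$ is isotropic. You are tacitly using positive definiteness, which only holds for real vectors. Consequently the incidence variety is not a vector bundle over $\PP$. Generic smoothness, as you apply it, says nothing about zeros on the locus $B\subset\PP$ where at least two components are isotropic.

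\textbf{How to repair it.} The theorem survives, but you need an extra dimension count over $B$.
\begin{itemize}
\item Choose $\yy_l$ with $q_l(\yy_l,\xx_l)=1$. The tensor $\yy_1\otimes\cdots\otimes\tilde w_i\otimes\cdots\otimes\yy_k$ with $q_i(\tilde w_i,\xx_i)=0$ has section value $(0,\ldots,0,[\tilde w_i],0,\ldots,0)$.
\item These values fill $V_i/\langle\xx_i\rangle$ when $\xx_i$ is non-isotropic, and a hyperplane of it when $\xx_i$ is isotropic.
\item The tensor $\yy_1\otimes\cdots\otimes\yy_k$, with value $([\yy_1],\ldots,[\yy_k])$, supplies one further direction.
\item Hence, if exactly $m$ components of $p$ are isotropic, the evaluation map $V\to\sE_p$ has rank $d$ for $m\le 1$, and rank at least $d-m+1$ for $m\ge 2$. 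Here $d=\mathrm{rank}\,\sE=\dim\PP$.
\item The stratum $B_m$ of such points has codimension $m$. So the part of the incidence variety over $B_m$, for $m\ge 2$, has dimension at most $(d-m)+\dim V-(d-m+1)=\dim V-1$, and cannot dominate $V$.
\end{itemize}
Thus a general $T$ has no singular tuple in $B$. Over $U=\PP\setminus B$ your vector-bundle and generic-smoothness argument works verbatim. Together these show that $Z(s_T)$ is finite and reduced, so its cardinality is $\int_{\PP}c_d(\sE)$, and Step 3 finishes the proof. Your side remark on dominance is fine: if the projection were not dominant, $s_T$ would be nowhere zero and $c_d(\sE)=0$.
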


We refer to \cite{DHOST} for more details on ED-degrees of algebraic varieties. 

\begin{Definition}\label{def: boundary format}
Consider a tensor space $V$ of format $\nn=(n_1+1,\ldots,n_k+1)$. Then $\nn$ is
\begin{enumerate}
    \item a {\em sub-boundary format} if for all $i\in[k]$ we have $n_i\le \sum_{j\neq i}n_j$.
    \item a {\em boundary format} if for some $i\in[k]$ we have $n_i= \sum_{j\neq i}n_j $.
    \item we have {\em beyond boundary format} if for some $i\in[k]$ the condition $n_i > \sum_{j\neq i}n_j$ holds.
\end{enumerate}
\end{Definition}

Next, we introduce the main tensor locus studied in this paper.

\begin{Definition}\label{def:Z_T}
Let \(T \in V\). We define
\begin{equation}\label{eq:def_Z_T}
Z_T \coloneqq \left\{ [\xx_1 \otimes \cdots \otimes \xx_k] \in \PP(V) \,\middle|\, (\xx_1,\ldots,\xx_k) \text{ is a singular vector \(k\)-tuple of } T \right\}.
\end{equation}
\end{Definition}

For a general tensor \(T \in V\), the set \(Z_T\) is zero-dimensional, and its cardinality coincides with the ED-degree of the Segre variety, computed by the Friedland--Ottaviani formula of \Cref{thm: FO formula}.

Throughout the paper, we compare the projective span $\langle Z_T\rangle$ with another important linear subspace associated to singular vector $k$-tuples which we recall in the next definition.

\begin{Definition}\label{def: critical space}
The {\it critical space} $H_T$ of a tensor $T\in V$ is the linear subspace of $V$ defined by the equations (in the unknowns $z_{i_1\cdots i_k}$ that serve as linear functions on $V$)
\begin{equation}\label{eq: equations critical space}
\sum_{i_\ell\in[n_\ell]}\left(t_{i_1\cdots\,p\,\cdots\,i_k}\,z_{i_1\cdots\,q\,\cdots\,i_k}-t_{i_1\cdots\,q\,\cdots\,i_k}\,z_{i_1\cdots\,p\,\cdots\,i_k}\right)=0\quad\text{where $1\le p<q\le n_\ell$ and $\ell\in[k]$.}
\end{equation}
\end{Definition}

The equations in \eqref{eq: equations critical space} are obtained after computing the two by two minors of the matrix in \eqref{eq: def singular vector tuple matrix} and substituting the relations $z_{j_1\cdots j_k}=x_{1,j_1}\cdots x_{k,j_k}$. In particular, the equations of $H_T$ are linear relations among the elements of $Z_T$, thus in general $\langle Z_T\rangle\subset\PP(H_T)$. Another immediate observation is that $T$ always belongs to $H_T$, as its coordinates always satisfy the equations in \eqref{eq: equations critical space}. We recall a result on the dimension of the critical space. 

\begin{Proposition}{\cite[Proposition 5.6]{OP}}

Consider a tensor $T\in V$ of format $\nn=(n_1+1,\ldots,n_k+1)$. Assume $n_1\le\cdots\le n_k$ and let $N=\prod_{i=1}^{k-1}(n_i+1)$.
The dimension of the critical space $H_T\subset V$ is
\begin{equation}
\begin{cases}
\prod_{i=1}^k(n_i+1)-\sum_{i=1}^k\binom{n_i+1}{2} & \text{for $(n_k+1)\le N$}\\
\binom{N+1}{2}-\sum_{i=1}^{k-1}\binom{n_i+1}{2} & \text{for $(n_k+1)\ge N$}\,.
\end{cases}
\end{equation}
\end{Proposition}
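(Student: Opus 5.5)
The plan is to read $H_T$ as the kernel of a linear map depending linearly on $T$, bound its rank, and then exhibit one explicit tensor where the bound is attained; semicontinuity then gives the value for general $T$.

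\textbf{Setup.} Using the Frobenius identification, for each $\ell\in[k]$ let $T_{(\ell)}$ and $z_{(\ell)}$ denote the flattenings $V_\ell\otimes\big(\bigotimes_{j\ne\ell}V_j\big)$. Then the equations of Definition~\ref{def: critical space} for index $\ell$ say exactly that the square matrix $T_{(\ell)}z_{(\ell)}^{t}\in V_\ell\otimes V_\ell$ is symmetric. Hence $H_T=\ker\Phi_T$, where
$$\Phi_T\colon V\to\bigoplus_{\ell=1}^k\textstyle\bigwedge^2V_\ell,\qquad z\mapsto\big(\mathrm{Alt}(T_{(\ell)}z_{(\ell)}^t)\big)_\ell .$$
Since $\Phi_T$ depends linearly on $T$, $\dim\ker\Phi_T$ is upper semicontinuous in $T$. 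The general value is therefore the minimum over all $T$, and it suffices to show two things: a lower bound $\dim H_T\ge$ (claimed value) for every $T$, and one explicit $T$ attaining it. Because the identification uses the fixed quadrics $q_i$, I work in orthonormal bases and choose the special tensor in those coordinates.

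\textbf{Case $n_k+1\ge N$.} Put $W=V_1\otimes\cdots\otimes V_{k-1}$, with $\dim W=N$.
\begin{itemize}
\item \emph{Reducing the $\ell=k$ condition.} View $T$ and $z$ as $(n_k+1)\times N$ matrices. If $T$ has rank $N$, choose coordinates on $V_k$ with $T=\binom{I_N}{0}$; this is allowed since $\mathrm{GL}(V_k)$ acts compatibly, $TA^t$ being symmetric iff $gTA^tg^t$ is. The condition that $Tz^t$ is symmetric forces the last $n_k+1-N$ rows of $z$ to vanish and the top $N\times N$ block to be symmetric. So the $\ell=k$ condition alone cuts out a copy of $\mathrm{Sym}^2W$, of dimension $\binom{N+1}{2}$.
\item \emph{Remaining conditions.} For $\ell<k$, the condition is a linear map $\psi_\ell\colon\mathrm{Sym}^2W\to\bigwedge^2V_\ell$, namely partial contraction over all factors of $W$ except $V_\ell$, followed by antisymmetrization. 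This gives $\dim H_T\ge\binom{N+1}{2}-\sum_{\ell<k}\binom{n_\ell+1}{2}$.
\item \emph{Equality.} I would show that $\bigoplus_{\ell<k}\psi_\ell$ is surjective by explicit elements. Take $z=(e_p\otimes u)\cdot(e_q\otimes u')$, with $e_p,e_q\in V_\ell$ and $u,u'$ in the other factors of $W$. Choosing $u$ orthogonal to $u'$ in at least one factor other than $\ell$ kills the images in all $\bigwedge^2V_{\ell'}$, $\ell'\ne\ell$, while leaving image $e_p\wedge e_q$ in $\bigwedge^2V_\ell$.
\item \emph{Degenerate case.} When $k=2$ the index set $\ell<k$ has the single element $\ell=1$, so $W=V_1$ has no other factors and the orthogonality trick is unnecessary. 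When $N=2$, that is $k=2$ with $n_1=1$, I would check directly: the symmetric matrix $z=e_1e_2^t+e_2e_1^t$ maps to a nonzero multiple of $e_1\wedge e_2$, so $\psi_1$ is onto the one-dimensional space $\bigwedge^2V_1$.
\end{itemize}

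\textbf{Case $n_k+1\le N$.}
\begin{itemize}
\item \emph{Lower bound.} Trivially $\dim H_T\ge\prod(n_i+1)-\sum_i\binom{n_i+1}{2}$, so I must prove that $\Phi_T$ is surjective for one $T$.
\item \emph{Candidate tensor.} I would take $T=\sum_{w}e_{\sigma(w)}\otimes e_w$, where $w$ runs over an orthonormal basis of $W$ and $\sigma$ is a surjection onto a basis of $V_k$ chosen as evenly as possible.
\item \emph{Surjectivity onto the $\bigwedge^2V_k$ summand.} Taking $z=e_a\otimes w$ with $\sigma(w)=b$ produces $e_b\wedge e_a$ in $\bigwedge^2V_k$.
\item \emph{Controlling the other summands.} It also produces contributions in the $\bigwedge^2V_\ell$ for $\ell<k$, and these must be shown to be either absent or correctable. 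I would handle this with the freedom in choosing $\sigma$, together with elements $z=e_b\otimes(w-w')$ where $\sigma(w)=\sigma(w')$.
\item \emph{Surjectivity onto $\bigwedge^2V_\ell$, $\ell<k$.} Use $z=e_{\sigma(w)}\otimes w'$, where $w'$ differs from $w$ only in the $\ell$-th factor.
\end{itemize}

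\textbf{Main obstacle.} The hardest step is this last surjectivity bookkeeping, where one must ensure that the images in different summands can be separated. If the combinatorics becomes messy, I would instead argue by induction on $n_k$. Adding one basis vector to $V_k$ and one slice to $T$ raises the rank of $\Phi_T$ by exactly $\dim\bigwedge^2V_k^{\mathrm{new}}-\dim\bigwedge^2V_k=n_k+1$, which is available because the new slice contributes $N\ge n_k+1$ new columns. The base case is the boundary between the two regimes, $n_k+1=N$, which is already covered by the first case.
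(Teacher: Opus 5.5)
The paper gives no proof of this proposition; it is quoted from \cite{OP}, so I judge your argument on its own. Your reduction to $\ker\Phi_T$, the semicontinuity step and both lower bounds are fine. The gap is in the special tensors meant to attain those bounds: in both cases your choice is degenerate.

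\textbf{Case $n_k+1\ge N$.} Normalizing $T=\binom{I_N}{0}$ by $\mathrm{GL}(V_k)$ is compatible only with the $\ell=k$ condition. The conditions for $\ell<k$ contract over $V_k$ with $q_k$, so only $O(V_k)$ preserves them.

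\begin{itemize}
\item The only reading in which $\psi_\ell$ is plain partial contraction over the other factors of $W$ is $T=\binom{I_N}{0}$ in orthonormal coordinates. For that tensor every $\psi_\ell$ vanishes identically on $\mathrm{Sym}^2W$. The reason is that contraction with the symmetric forms $q_m$ commutes with swapping the two copies of $W$, so a symmetric $Y$ contracts to a symmetric element of $V_\ell\otimes V_\ell$.
\item Concretely, your test element contracts in $V_\ell\otimes V_\ell$ to $\langle u,u'\rangle(e_p\otimes e_q+e_q\otimes e_p)$, whose alternation is $0$. For $k=2$, the alternation of the symmetric matrix $e_1e_2^t+e_2e_1^t$ is $0$, not a multiple of $e_1\wedge e_2$.
\item Hence this $T$ has $\dim H_T=\binom{N+1}{2}$. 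For matrices this is transparent: $\binom{I_N}{0}$ has all singular values equal.
\end{itemize}

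The normalization discards the invariant that matters, the Gram matrix $G=T^tT\in\mathrm{Sym}^2W$. For $T$ of rank $N$, in orthonormal coordinates, the $\ell=k$ condition gives $z=TY$ with $Y\in\mathrm{Sym}^2W$. The $\ell<k$ conditions then read $\mathrm{tr}_{\widehat{\ell}}(GY-YG)=0$, where $\mathrm{tr}_{\widehat{\ell}}$ is the partial trace over the factors of $W$ other than $V_\ell$.

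A repair is to take $T=\sum_w\lambda_w\,e_w\otimes w$, where $\{w\}$ is an orthonormal product basis of $W$, the $e_w$ are orthonormal in $V_k$, and the $\lambda_w^2$ are pairwise distinct. Then $Y\mapsto GY-YG$ maps $\mathrm{Sym}^2W$ onto $\bigwedge^2W$. Moreover $\bigoplus_{\ell<k}\mathrm{tr}_{\widehat{\ell}}\colon\bigwedge^2W\to\bigoplus_{\ell<k}\bigwedge^2V_\ell$ is onto: use $(e_p\otimes u)\wedge(e_q\otimes u)$ with $u$ a unit product vector, where $q_\ell(e_p,e_q)=0$ kills the other summands.

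\textbf{Case $n_k+1\le N$.} The same defect appears, and the bookkeeping you defer is exactly where it bites.

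\begin{itemize}
\item For $k=2$ this case forces $n_1=n_2$. Then $\sigma$ is a bijection and $T$ is a permutation matrix, hence orthogonal. So $H_T=\{z : T^tz\ \text{symmetric}\}$ has dimension $\binom{N+1}{2}$ rather than $N$.
\item For $k\ge 3$ the outcome depends on $\sigma$. In format $2\times2\times2$, the perfectly even choice $\sigma(e_i\otimes e_j)=f_i$ gives $T=\sum_i e_i\otimes(e_1+e_2)\otimes f_i$. Its $\ell=1$ and $\ell=3$ equations are $\pm\sum_j(z_{2j1}-z_{1j2})$, so $\dim H_T\ge 6>5$.
\end{itemize}

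The fallback induction does not repair this. Its base $n_k+1=N$ is the top of the range $n_{k-1}\le n_k\le N-1$, while the inductive step enlarges $V_k$ and so leaves the regime. Also, having $N\ge n_k+1$ new columns does not by itself raise the rank by $n_k+1$. The new columns also hit the old summands $\bigwedge^2V_\ell$, and the old columns acquire components in $V_k\wedge e_{\mathrm{new}}$.

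A uniform way to organize both cases: up to the Frobenius identifications, $\Phi_T$ is the transpose of $A\mapsto A\cdot T$ for $A\in\mathfrak g=\bigoplus_\ell\mathfrak{so}(V_\ell)$. Hence $H_T=(\mathfrak g\cdot T)^\perp$ and $\dim H_T=\dim V-\dim\mathfrak g+\dim\mathfrak g_T$. The statement is then equivalent to the generic stabilizer having $\dim\mathfrak g_T=0$ for $n_k+1\le N$, and $\dim\mathfrak g_T=\binom{n_k+1-N}{2}$ (the obvious $\mathfrak{so}$ of $T(W)^\perp\subset V_k$) for $n_k+1\ge N$. Either way, the test tensor must break the extra orthogonal symmetry that your identity- and permutation-type tensors carry.
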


\begin{Proposition}{\cite[Proposition 3.6]{DOT}}\label{prop: equality span critical sub-boundary format}
Let $T$ be general in a tensor space $V$ of sub-boundary or boundary format. Then $\langle Z_T\rangle=\PP(H_T)$.
\end{Proposition}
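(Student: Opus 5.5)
The plan is to follow the cohomological approach of \cite{DOT}, realizing $Z_T$ as the zero locus of a section of the Friedland--Ottaviani bundle and then comparing linear forms through a Koszul resolution.

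\textbf{Step 1: the section and its Koszul complex.} On $\PP=\PP(V_1)\times\cdots\times\PP(V_k)$ consider the bundle
\[
\sE=\bigoplus_{i=1}^k \sQ_i\otimes\sO(\mathbf 1-e_i)\big|_{\PP},
\]
where $\sQ_i$ is the pullback of the quotient bundle on $\PP(V_i)$. It has rank $r=\sum n_i=\dim\PP$. A tensor $T$ defines a section $s_T$ whose $i$-th component is the class of $\nabla_i T$ modulo $\xx_i$. Its zero locus is therefore exactly the set of singular vector $k$-tuples, and \Cref{thm: FO formula} says that for general $T$ this locus $Z_T$ is reduced of length $\mathrm{ED}(\nn)$. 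Since $s_T$ is a regular section, the Koszul complex
\[
0\to\bigwedge^r\sE^*\to\cdots\to\bigwedge^2\sE^*\to\sE^*\to\sI_{Z_T}\to 0
\]
is exact. I will twist it by $\sO(\mathbf 1)$, whose sections are the linear forms on $V$ restricted to the Segre variety.

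\textbf{Step 2: identify the two spaces.} Linear forms vanishing on $\langle Z_T\rangle$ are $H^0(\sI_{Z_T}(\mathbf 1))$. Next, $H^0(\sE^*(\mathbf 1))=\bigoplus_i H^0(\sQ_i^*(e_i))=\bigoplus_i\bigwedge^2 V_i$. Under the Koszul differential, a basis element $e_p\wedge e_q$ of the $\ell$-th summand maps to the linear form in \eqref{eq: equations critical space} with indices $p,q,\ell$. So the image of $H^0(\sE^*(\mathbf 1))\to H^0(\sO(\mathbf 1))$ is exactly the span of the equations of $H_T$. Because we always have $\langle Z_T\rangle\subset \PP(H_T)$, it then suffices to prove that
\[
H^0(\sE^*(\mathbf 1))\to H^0(\sI_{Z_T}(\mathbf 1))
\]
is surjective.

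\textbf{Step 3: reduce surjectivity to vanishing.} Split the twisted Koszul complex into short exact sequences and chase through them. Surjectivity follows once
\[
H^{j}\Big(\bigwedge^{j+1}\sE^*(\mathbf 1)\Big)=0 \quad\text{for all } 1\le j\le r-1.
\]
Each $\bigwedge^{j+1}\sE^*(\mathbf 1)$ decomposes as a direct sum of external tensor products of $\bigwedge^{a_i}\sQ_i^*\otimes\sO(b_i)$ on the factors $\PP(V_i)$. Their cohomology is computed with the Künneth formula together with Bott's theorem on each projective space.

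\textbf{Main obstacle.} The hard part is the vanishing itself. A nonzero term needs the degrees on every factor to fall in the narrow window where Bott's theorem allows nonzero cohomology, and the cohomological degree $j$ must match the sum of the contributing degrees. I would first write down for which $(a_i,b_i)$ the group $H^{c_i}$ is nonzero on $\PP^{n_i}$. The shape of this window is an expectation still to be checked against Bott's theorem: $c_i=0$, or $c_i=n_i$ with $b_i$ sufficiently negative, or a single middle degree $c_i=a_i$ when $b_i=0$. Then I would show that $\sum_i c_i=j$ together with $\sum_i a_i=j+1$ forces some $n_i>\sum_{l\neq i}n_l$. This is precisely where the (sub-)boundary format hypothesis $n_i\le\sum_{l\ne i}n_l$ enters, and the bookkeeping of this inequality is the step I expect to be delicate.
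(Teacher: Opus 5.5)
Your proposal is correct and is essentially the argument of \cite{DOT} that the paper cites for this statement and whose machinery it reuses in \Cref{sec: Cohomology}. As there, you resolve $\sI_{Z_T}$ by the Koszul complex, twist by $\sO(\mathbf{1})$, identify the image of $H^0(\sE^{(1)})$ with the equations of $H_T$, and obtain $H^1(\sF_2(\mathbf{1}))=0$ from the vanishing $H^{s-1}(\sE^{(s)})=0$ for $2\le s\le\dim\PP$ via \Cref{lemma 3.2 DOT}, K\"unneth and Bott. One correction for your bookkeeping: since $\bigwedge^{a_i}\sQ_i^*\otimes\sO(b_i)\cong\Omega^{a_i}(a_i+b_i)$, the middle-degree window is $b_i=-a_i$, not $b_i=0$; with this fixed, the only summand that can contribute to $H^{s-1}(\sE^{(s)})$ has $r_i=s$ on one factor and $\sum_{l\neq i}n_l=s-1$, which forces $n_i>\sum_{l\neq i}n_l$, exactly as you anticipated.
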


The containment $\langle Z_T\rangle \subset \mathbb{P}(H_T)$ can be strict when the tensor space is beyond boundary format \cite[Remark 3.7]{DOT}. This leads the authors of {\it loc. cit.} to pose the natural problem of studying the dimension of $\langle Z_T\rangle$ beyond boundary format.

\section{Vector bundles, cohomology and representation theory}\label{sec: Cohomology}

Here we recall the main techniques we shall apply throughout the paper. We refer to \cite{Weyman} for more details.

\begin{Notation}
For every $i\in[k]$, we consider the projection $\pi_i\colon\PP\to\PP(V_i)$. Moreover, we denote by $\mathcal{Q}_i$ the quotient bundle on $\PP(V_i)$. The fiber of $\mathcal{Q}_i$ over $[\xx_i]\in\PP(V_i)$ is $V_i/\langle\xx_i\rangle$.
\end{Notation}

\begin{Theorem}[K\"unneth's formula]\label{thm: Kunneth}
Let $q$ be a nonnegative integer. Consider the Segre product $\PP$ and let $\mathcal B_i$ be a vector bundle on $\PP(V_i)$ for all $i\in[k]$. Then
\begin{equation}\label{eq: Kunneth}
H^q\bigg(\PP,\bigotimes_{i=1}^k \pi_i^\ast \mathcal B_i\bigg)\cong \bigoplus_{|{\bf j}|=q}\bigotimes_{i=1}^k H^{j_i}(\PP(V_i), \mathcal B_i)\,.
\end{equation}
\end{Theorem}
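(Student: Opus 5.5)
The statement immediately above is Künneth's formula for the product $\PP=\PP(V_1)\times\cdots\times\PP(V_k)$ with an exterior tensor product of bundles. I would prove it by induction on $k$, so the essential case is a product of two varieties $X\times Y$ with sheaves $\mathcal B$ on $X$ and $\mathcal C$ on $Y$. For $k=1$ there is nothing to show. For the inductive step, set $X=\PP(V_1)\times\cdots\times\PP(V_{k-1})$ and $Y=\PP(V_k)$, and apply the two-factor statement to $\bigotimes_{i<k}\pi_i^\ast\mathcal B_i$ on $X$ and $\mathcal B_k$ on $Y$. Then expand the first factor using the induction hypothesis. This produces exactly the sum over multi-indices ${\bf j}$ with $|{\bf j}|=q$ in \eqref{eq: Kunneth}.

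For the two-factor case I would use \v{C}ech cohomology. Choose finite affine open covers $\mathfrak U=\{U_a\}$ of $X$ and $\mathfrak W=\{W_b\}$ of $Y$; for projective spaces the standard coordinate charts will do. Because the varieties are separated, all intersections of these opens are affine. The products $U_a\times W_b$ form an affine cover of $X\times Y$, and every finite intersection of them is again a product of affine opens, $U_{a_0\cdots a_p}\times W_{b_0\cdots b_p}$. On such an affine product we have
\[
\Gamma(U\times W,\pi_X^\ast\mathcal B\otimes\pi_Y^\ast\mathcal C)=\Gamma(U,\mathcal B)\otimes_\C\Gamma(W,\mathcal C).
\]
This holds because the coordinate ring of $U\times W$ is the tensor product of the coordinate rings, and the sections of a coherent sheaf on an affine scheme form the corresponding module. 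Since both covers are affine and the sheaves are quasi-coherent, each \v{C}ech complex computes the cohomology of its sheaf.

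The key step is to compare two complexes. One is the \v{C}ech complex of the product cover. The other is the tensor product $C^\bullet(\mathfrak U,\mathcal B)\otimes C^\bullet(\mathfrak W,\mathcal C)$, with total degree $p+p'$. The indexing of the product cover runs over pairs of simplices, while the tensor product complex runs over simplices separately, so the two complexes are not literally equal. I would connect them with the Alexander--Whitney and Eilenberg--Zilber maps, which form the standard quasi-isomorphism between the diagonal of a bisimplicial module and its total complex. This step is the main technical obstacle. As a lighter alternative, I could instead cite the Eilenberg--Zilber theorem at this point.

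Once the quasi-isomorphism is in place, we work over the field $\C$, so the algebraic Künneth theorem for tensor products of complexes of vector spaces applies. It has no Tor terms and gives
\[
H^q\bigl(\bigotimes\bigr)=\bigoplus_{j+j'=q}H^j(C^\bullet(\mathfrak U,\mathcal B))\otimes H^{j'}(C^\bullet(\mathfrak W,\mathcal C)).
\]
This is the two-factor formula, which completes the induction. All cohomology groups involved are finite dimensional, but that is not needed for this step.
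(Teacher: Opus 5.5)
Your argument is correct, but there is no proof in the paper to compare it with. The paper quotes K\"unneth's formula as standard background; the section points to Weyman's book for these tools. It uses the formula only to split $H^q(\sE^{(r)})$, via Lemma~\ref{lemma 3.2 DOT}, into tensor products of cohomologies of the bundles $\Omega^{r_i}_{\PP(V_i)}(2r_i+1-r)$. Those factors are then evaluated with Bott's theorem in Propositions~\ref{lemma: gen coh 1}--\ref{lemma: gen coh 3}.

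What you give is the classical \v{C}ech proof. Citing buys the paper brevity. Your route buys a self-contained justification and a more general statement, since you never use local freeness or projectivity. You only use quasi-coherence, quasi-compactness and separatedness over a field.

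Two points would make your key step airtight:
\begin{itemize}
\item Work with the full ordered \v{C}ech complexes, which compute the same cohomology as the alternating ones. This is what makes $p\mapsto C^p(\mathfrak U,\mathcal B)$ a cosimplicial vector space. It is also what makes the \v{C}ech complex of the product cover literally the diagonal of the bicosimplicial space $C^p(\mathfrak U,\mathcal B)\otimes C^{p'}(\mathfrak W,\mathcal C)$.
\item The Eilenberg--Zilber theorem you need is the cosimplicial (dual) version of the bisimplicial one you mention.
\end{itemize}
Note also that the finiteness of the covers, which you imposed, is what identifies the product over pairs of index tuples with this tensor product of products.
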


Let $G$ be a semisimple simply connected group, let $P\subset G$ be a parabolic subgroup. Let $\Phi^+$ be the (finite) set of positive roots of $G$. Let $\delta=\sum \lambda_i$ be the sum of all the fundamental weights and let $\lambda$ be a weight. Let $E_\lambda$ be the homogeneous bundle arising from the irreducible representation of $P$ with highest weight $\lambda$ and $(\ ,\ )$ be the Killing form.
\begin{Definition}
The {\em fundamental Weyl chamber} is the convex set
\begin{equation}\label{eq: Weyl chamber}
\Lambda=\{\lambda\text{ is a weight} \mid (\lambda,\alpha)\geq0,\  \forall\ \alpha\in \Phi^+ \}.
\end{equation}
\end{Definition}

\begin{Theorem}
Let $G$ be a semisimple and simply-connected group and let $\lambda: B\rightarrow \mathcal C^{*}$ be a weight, where $B$ is a Borel subgroup of $G$. Let $P\subset G$ be a parabolic subgroup and let $\rho$ be the representation of $P$ with highest weight $\lambda$. This defines a line bundle $L_{\lambda}$ on the flag variety $G/B$. Denote by $\pi: G/B\rightarrow G/P$ the natural projection, which is flat with rational fibers. Let $\mathcal E$ be the homogeneous bundle defined by the irreducible representation $\rho$ of $P$ \cite[Chapter 10]{Ottaviani95}. Then 
\[
\pi_{*} L_{\lambda} \cong \mathcal E. 
\]
\end{Theorem}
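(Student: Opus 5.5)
The plan is to identify $\pi_*L_\lambda$ fiberwise and then use $G$-equivariance to recognize it as a homogeneous bundle. \textbf{Implicit hypothesis.} I read the statement as assuming that $\lambda$ is dominant for the Levi factor of $P$, i.e.\ $(\lambda,\alpha)\ge 0$ for the positive roots $\alpha$ of the Levi. This is forced by the statement itself, since $\lambda$ is assumed to be the highest weight of an irreducible representation $\rho$ of $P$.

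\textbf{Step 1: the fibers.} The fiber of $\pi\colon G/B\to G/P$ over the base point $eP$ is $P/B$. Write $P=L\ltimes U$ with $L$ the Levi factor and $U$ the unipotent radical. Then $P/B\cong L/(B\cap L)$ is the full flag variety of the reductive group $L$. The restriction $L_\lambda|_{P/B}$ is the line bundle on $L/(B\cap L)$ attached to the same character $\lambda$.

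By the Borel--Weil--Bott theorem for $L$:
\begin{itemize}
\item $H^0(P/B,L_\lambda|_{P/B})$ is the irreducible $L$-module with highest weight $\lambda$. Depending on sign conventions it may be the dual, with lowest weight $-\lambda$; I would fix conventions so that it matches $\rho$.
\item $H^i(P/B,L_\lambda|_{P/B})=0$ for all $i>0$.
\end{itemize}
Since $U$ is normal in $P$ and acts trivially on irreducible $P$-modules, this $L$-module is exactly the $P$-module $\rho$.

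\textbf{Step 2: $\pi_*L_\lambda$ is a vector bundle.} The map $\pi$ is flat and projective, and every fiber is isomorphic to $P/B$ by a translate by $g\in G$. Hence $h^0$ of $L_\lambda$ on the fibers is constant. By Grauert's theorem (cohomology and base change), $\pi_*L_\lambda$ is locally free of rank $\dim\rho$. Its fiber at $gP$ is $H^0(gP/B,L_\lambda)$, and $R^i\pi_*L_\lambda=0$ for $i>0$.

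\textbf{Step 3: homogeneity.} $L_\lambda$ is $G$-equivariant and $\pi$ is $G$-equivariant, so $\pi_*L_\lambda$ is a $G$-homogeneous vector bundle on $G/P$. By the standard equivalence between homogeneous bundles on $G/P$ and finite-dimensional $P$-representations, such a bundle is determined by the isotropy action of $P$ on its fiber over $eP$. By Step 1 this fiber is $\rho$, so $\pi_*L_\lambda\cong G\times_P\rho=\mathcal E$.

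\textbf{Main obstacle.} The delicate part is the bookkeeping of conventions in Step 1. One must check that the $P$-action on $H^0(P/B,L_\lambda)$, coming from left translation, really yields the representation with highest weight $\lambda$ rather than its dual. One must also check that $U$ acts trivially. The latter holds because $U$ acts trivially on $P/B=L/(B\cap L)$ up to the identification, and by unipotence it acts trivially on the irreducible $P$-module. Everything else is formal once Borel--Weil for the Levi and base change are invoked.
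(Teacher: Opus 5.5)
Your argument is correct and is the standard proof: Borel--Weil for the Levi factor on the fiber $P/B\cong L/(B\cap L)$, then cohomology and base change along the flat map $\pi$, then the equivalence between homogeneous bundles on $G/P$ and $P$-modules; your caveats about the $L$-dominance of $\lambda$ and the sign conventions are the right ones. The paper gives no proof of its own and simply cites Ottaviani's notes, where the result is obtained along these same lines.
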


\begin{Definition}
The vector bundle $\mathcal E$ on $G/P$ above is also denoted $\mathcal E_{\lambda}$, where $\lambda$ is the {\it weight} 
corresponding to $\mathcal E$. 
\end{Definition}

\begin{Definition}
The weight $\lambda$ is called {\em singular} if there exists a root $\alpha \in \Phi^+$ such that $(\lambda,\alpha)=0$. Otherwise, if $(\lambda,\alpha)\neq 0$ for all the roots $\alpha \in \Phi^+$, we say that $\lambda$ is {\em regular of index $p$} if there exist exactly $p$ roots $\alpha_1,\dots,\alpha_p\in\Phi^+$ such that $(\lambda,\alpha)<0$.
\end{Definition}

\begin{Theorem}[Bott-Borel-Weil]\label{thm: Bott}
The following statements hold: 
\begin{enumerate}
    \item If $\lambda + \delta$ is singular, then $H^i(G/P,\mathcal E_\lambda)=0$ for all $i$.
    \item If $\lambda+\delta $ is regular of index $p$, then $H^i(G/P, \mathcal E_\lambda)=0$ for $i\neq p$. Moreover $H^p(G/P, \mathcal E_\lambda)=G_{\sigma(\lambda+\delta)-\delta}$, where $\sigma(\lambda+\delta)$ is the unique element of the fundamental Weyl chamber of G which is congruent to $\lambda+\delta$ under the action of the Weyl group and $G_{\sigma(\lambda+\delta)-\delta}$ is the corresponding irreducible $G$-representation. 
\end{enumerate}
\end{Theorem}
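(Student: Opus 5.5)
The plan is to follow the classical route: reduce from $G/P$ to the full flag variety $G/B$, settle the dominant case there (Borel--Weil together with Kempf vanishing), and then move an arbitrary weight into the dominant chamber one simple reflection at a time using Demazure's $\PP^1$-fibration argument. The first step is the reduction. By the theorem recalled just above, $\pi_\ast L_\lambda\cong\mathcal E_\lambda$. The fibers of $\pi\colon G/B\to G/P$ are $P/B$, a flag variety of the Levi factor of $P$. Since $\lambda$ is dominant for that Levi, Borel--Weil on the fiber gives $H^0$ equal to the fiber of $\mathcal E_\lambda$, and Kempf vanishing on the fiber gives the vanishing of the higher cohomology. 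Hence $R^i\pi_\ast L_\lambda=0$ for $i>0$, the Leray spectral sequence degenerates, and
\[
H^i(G/P,\mathcal E_\lambda)\cong H^i(G/B,L_\lambda)\quad\text{for all } i .
\]
It therefore suffices to prove the theorem for line bundles on $G/B$.

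Next comes the dominant case, $\lambda\in\Lambda$. Algebraic Frobenius reciprocity identifies $H^0(G/B,L_\lambda)$ with the induced module $\mathrm{Ind}_B^G(\lambda)$, and the Borel--Weil theorem identifies this induced module with the irreducible representation $G_\lambda$. The higher cohomology vanishes by Kempf vanishing: in characteristic zero one can use Kodaira vanishing, since $L_{\lambda}\otimes K_{G/B}^{-1}=L_{\lambda+2\delta}$ is ample for $\lambda$ dominant. This is precisely the index-$0$ case of the theorem, because $\lambda+\delta$ is then strictly dominant.

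The core of the argument, and the main obstacle, is the reflection step. Fix a simple root $\alpha$ with minimal parabolic $P_\alpha$, and consider the $\PP^1$-bundle $p\colon G/B\to G/P_\alpha$. The restriction of $L_\lambda$ to each fiber is $\sO(m)$ with $m=(\lambda,\alpha^\vee)$.
\begin{itemize}
\item If $m=-1$, equivalently $(\lambda+\delta,\alpha)=0$, then all direct images $R^ip_\ast L_\lambda$ vanish, so all cohomology of $L_\lambda$ vanishes. This yields the singular case for the walls of simple roots.
\item If $m\le-2$, relative Serre duality on the $\PP^1$-bundle (Demazure's trick) gives an isomorphism
\[
R^1p_\ast L_\lambda\cong p_\ast L_{s_\alpha\cdot\lambda}, \qquad s_\alpha\cdot\lambda=s_\alpha(\lambda+\delta)-\delta,
\]
while the remaining direct images vanish. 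Combined with the Leray spectral sequence, this produces
\[
H^i(G/B,L_\lambda)\cong H^{i-1}(G/B,L_{s_\alpha\cdot\lambda}).
\]
\end{itemize}
The delicate point in the case $m\le-2$ is checking that this identification is $G$-equivariant, i.e.\ that the twist by the relative canonical bundle $L_{-\alpha}$ is compatible with the dot action.

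Finally, I would run an induction on the length of the Weyl group element $w$ that carries $\lambda+\delta$ into the fundamental chamber. If $\lambda+\delta$ is not dominant, it has negative pairing with some simple root $\alpha$, and applying $s_\alpha$ decreases the number of positive roots with negative pairing by exactly one; this count is the index $p$. After $p$ steps one reaches either a dominant weight, giving $H^p=G_{\sigma(\lambda+\delta)-\delta}$ and vanishing in all other degrees by the dominant case, or a weight lying on a simple-root wall. The latter occurs exactly when $\lambda+\delta$ is singular, since $W$-translates of walls are the root hyperplanes. In that case every cohomology group vanishes.
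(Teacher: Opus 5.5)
The paper does not prove this statement. It quotes the classical Bott--Borel--Weil theorem, pointing to Weyman's book, and afterwards uses it only for $\mathrm{SL}(n+1)$, through Bott's algorithm and the formula for $h^q(\Omega^r_{\PP^n}(k))$. Your proposal is therefore an independent argument. It is the standard Demazure-style proof, and it is correct over $\C$:
\begin{enumerate}
\item the reduction $H^i(G/P,\mathcal E_\lambda)\cong H^i(G/B,L_\lambda)$, from the vanishing of $R^i\pi_\ast L_\lambda$ on the Levi flag fibres $P/B$;
\item Borel--Weil plus Kodaira vanishing, via $L_\lambda\cong K_{G/B}\otimes L_{\lambda+2\delta}$, for dominant $\lambda$;
\item the $\PP^1$-fibration $G/B\to G/P_\alpha$, which gives total vanishing when $(\lambda,\alpha^\vee)=-1$ and $H^i(L_\lambda)\cong H^{i-1}(L_{s_\alpha\cdot\lambda})$ when $(\lambda,\alpha^\vee)\le -2$.
\end{enumerate}
Compared with the bare citation, your route explains the combinatorics the paper actually relies on. In type $A$, the exchanges $\sigma_i^\bullet$ of Bott's algorithm are your simple reflections under the dot action. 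The stopping condition $\alpha_{i+1}=\alpha_i+1$ (paper's notation $\lambda=\sum\alpha_iL_i$) is your wall case $(\lambda,\alpha^\vee)=-1$ for $\alpha=L_i-L_{i+1}$. The number of exchanges is the index $p$.

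Two steps deserve an explicit line when you write this out.

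\emph{The reflection step.} Relative duality by itself only gives $R^1p_\ast L_\lambda\cong (p_\ast L_{-\lambda-\alpha})^\vee$. To identify this with $p_\ast L_{s_\alpha\cdot\lambda}$, you use that these $P_\alpha$-modules factor through the Levi factor and are irreducible there. The dual then has highest weight $-s_\alpha(-\lambda-\alpha)=s_\alpha\lambda-\alpha=s_\alpha\cdot\lambda$, using $s_\alpha\delta=\delta-\alpha$. This is exactly the equivariance check you flagged, and it is where characteristic zero enters.

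\emph{The singular case.} The index is not defined when $\lambda+\delta$ is singular, so ``after $p$ steps'' should be replaced by the following. Keep reflecting in simple roots with negative pairing until $w(\lambda+\delta)$ is dominant; it is still singular. Write the offending positive root as a nonnegative combination of simple roots. A dominant weight orthogonal to it is then orthogonal to every simple root occurring in it. So the wall case applies, and the vanishing propagates back along your isomorphisms.
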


\begin{Algorithm}\cite[Remark 4.1.5]{Weyman}
We expose how to find the cohomology groups $H^i(G/P, \mathcal E_\lambda)$ in the case when $G = \mathrm{SL}(n+1)$.
Let $L_i - L_{i+1}$ for $1\leq i\leq n$ be the positive roots and let $\mathbb Z^{n+1}$ be the lattice spanned by them; recall that $L_1+\cdots + L_{n+1}=0$. 
Thus the dual of the Lie algebra of the Cartan subalgebra $\mathfrak h^{*}\cong \mathbb Z^{n+1}/\mathbb Z(1,\ldots, 1)$. The $k$th fundamental weight is
the sum $\lambda_k = \sum_{i=1}^k L_i$. Let $\lambda\in \mathbb Z^{n+1} = \mathbb Z\langle L_1,\ldots, L_{n+1}\rangle$ be $\lambda = \sum_{i=1}^{n+1} \alpha_i L_i$. In this terminology, $\delta = (n,n-1,\ldots,1, 0)\in \mathbb Z^{n+1}$. The Weyl group $\mathfrak S_{n+1}$ naturally acts permuting the entries. 
For an element $\sigma\in \mathfrak S_{n+1}$ define the action
\begin{equation}\label{exchanges}
\sigma^{\bullet} (\lambda) = \sigma(\lambda+\delta) - \delta. 
\end{equation}
For $\sigma_i = (i, i+1)\in \mathfrak S_{n+1}$, one has $\sigma_i^{\bullet}(\lambda) = (\alpha_1, \ldots, \alpha_{i-1}, \alpha_{i+1}-1, \alpha_{i}+1, \alpha_{i+2},\ldots, \alpha_{n+1})$. We have the following cases: 
\begin{enumerate}
\item[(i)] If $\lambda$ is a partition, i.e. $\alpha_{i+1}\geq \alpha_{i}$ for every $i$, then $\lambda+\delta$ is regular of index $0$. Therefore,
one has $H^0(\mathrm{SL}(n+1)/P, \mathcal E_\lambda)=\mathrm{SL}(n+1)_{w(\lambda+\delta)-\delta} = \mathrm{SL}(n+1)_{\lambda}$. 
 
\item[(ii)] If $\lambda$ is not a partition, apply exchanges $\sigma_i$ acting by \eqref{exchanges}. Then, after each exchange, two possibilities may occur: 
\begin{enumerate}
\item[(a)] One has $\alpha_{i+1} = \alpha_{i}+1$ for some index $i$. Then the element $\lambda+\delta$ is singular and so $H^i(\mathrm{SL}(n+1)/P, \mathcal E_\lambda)=0$ for every $i$. 

\item[(b)] Otherwise, we have reached a non-increasing sequence $\beta = \sigma(\lambda+\delta)-\delta$, after having performed $p>0$ exchanges. Then 
$\lambda$ is regular of index $p$. Hence $H^p(\mathrm{SL}(n+1)/P, \mathcal E_\lambda)=\mathrm{SL}(n+1)_{\beta}$ and $H^i(\mathrm{SL}(n+1)/P, \mathcal E_\lambda) = 0$ for $i\neq p$. 
\end{enumerate}
Note that $\beta\in \mathbb Z^{n+1}$ is a non-increasing sequence, but it may have negative entries. However, $\beta$ is an element of $\mathfrak h^{*}$ and so it is in the equivalence class of the non-negative non-increasing weight $\beta'=\beta + (-\beta_{n+1},\ldots, -\beta_{n+1})$. The weight $\beta'$ is in the fundamental Weyl chamber $\Lambda$. 
\end{enumerate}
\end{Algorithm}

Let $\Omega^r_{\PP^n}$ be the $\sO(k)$-twisted vector bundle of differential $r$-forms on $\PP^n$. 

\begin{Proposition}{\cite[Proposition 4.2.3]{Weyman}}
Let $P=P(\alpha_1)$ be the parabolic subgroup of $G=\mathrm{SL}(n+1)$ corresponding to omitting the first simple root. 
Then a homogeneous vector bundle $\mathcal E_{\lambda}$ on $\PP^n = \mathrm{SL}(n+1)/P(\alpha_1)$ is the push-forward 
of a line bundle $L_{\lambda}$ of weight $\lambda$ on the flag variety $\mathrm{SL}(n+1)/B$. One has canonical isomorphisms $\Omega^r_{\PP^n}(r)\cong \bigwedge^{r}\sQ^{*}_{\PP^n}\cong \bigwedge^{n-r}\sQ_{\PP^n}(-1)$. The weight assigned to $\bigwedge^{n-r}\sQ_{\PP^n}\cong \Omega^r_{\PP^n}(r+1)$
is $\lambda_{r+1}$ for $0\leq r\leq n-1$.
\end{Proposition}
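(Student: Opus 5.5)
The plan is to obtain all three identifications from the Euler sequence on $\PP^n=\PP(V)$, with $V=\C^{n+1}$, and then to read off the weight from the fiber at the base point, viewed as a representation of $P(\alpha_1)$.

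First, the Euler sequence is
\[
0\to\sO(-1)\to V\otimes\sO\to\sQ_{\PP^n}\to 0,
\]
and we use the standard identification $\sQ_{\PP^n}\cong T_{\PP^n}(-1)$. Dualizing gives $\sQ^*_{\PP^n}\cong\Omega^1_{\PP^n}(1)$. Taking $r$-th exterior powers and using $\bigwedge^r(\mathcal F\otimes L)=\bigwedge^r\mathcal F\otimes L^{\otimes r}$ for a line bundle $L$, we get $\bigwedge^r\sQ^*_{\PP^n}\cong\Omega^r_{\PP^n}(r)$.

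For the second isomorphism, note that $\det(V\otimes\sO)$ is trivial, so the Euler sequence gives $\det\sQ_{\PP^n}=\bigwedge^n\sQ_{\PP^n}\cong\sO(1)$. Since $\sQ$ has rank $n$, the wedge pairing
\[
\textstyle\bigwedge^{n-r}\sQ\otimes\bigwedge^{r}\sQ\to\bigwedge^n\sQ\cong\sO(1)
\]
is perfect. It yields $\bigwedge^r\sQ^*\cong\bigwedge^{n-r}\sQ\otimes(\det\sQ)^{-1}=\bigwedge^{n-r}\sQ(-1)$. Twisting by $\sO(1)$ gives $\bigwedge^{n-r}\sQ\cong\Omega^r_{\PP^n}(r+1)$.

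For the weight, take the base point $[e_1]$, whose stabilizer is $P(\alpha_1)$, with Levi factor $S(GL_1\times GL_n)$. The fiber of $\sQ$ at $[e_1]$ is $V/\langle e_1\rangle$, whose torus weights are $L_2,\dots,L_{n+1}$ in the covariant picture. Hence the fiber of $\bigwedge^{n-r}\sQ$ is the irreducible $P$-module $\bigwedge^{n-r}(V/\langle e_1\rangle)$. Its $B$-extremal weight is a sum of $n-r$ distinct elements among $L_2,\dots,L_{n+1}$. Using the relation $L_1+\cdots+L_{n+1}=0$, this equals minus the sum of the complementary $r+1$ weights, which include $L_1$. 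Under the convention of the Algorithm, where homogeneous bundles are labeled through the contragredient so that $\sO(1)$ has weight $\lambda_1$ and $H^0(\sO(1))$ matches $\lambda_1$, this complementary sum becomes exactly $L_1+\cdots+L_{r+1}=\lambda_{r+1}$.

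As a consistency check, Bott's Algorithm case (i) should give $H^0(\bigwedge^{n-r}\sQ)=\mathrm{SL}(n+1)_{\lambda_{r+1}}$, i.e. $\bigwedge^{r+1}V^*\cong\bigwedge^{n-r}V$. This agrees with the long exact cohomology sequence of the Koszul resolution obtained from the Euler sequence. The extreme cases fit as well: $r=n-1$ gives $\sQ$ with weight $\lambda_n$, and $r=0$ gives $\det\sQ=\sO(1)$ with weight $\lambda_1$.

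The main obstacle is the bookkeeping of conventions rather than any geometry. One has to decide whether $\mathcal E_\lambda$ is attached to $\lambda$ or to its dual, whether the fiber is taken at a highest or a lowest weight vector, and how the quotient $\mathfrak h^*=\Z^{n+1}/\Z{\bf 1}$ enters. I would fix these choices by checking the case $\sO(1)\leftrightarrow\lambda_1$ against Bott's Algorithm first. Once that calibration is set, the identification of $\bigwedge^{n-r}\sQ$ with $\lambda_{r+1}$ is forced by the duality above.
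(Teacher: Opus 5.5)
Your argument is correct, but it takes a different route from the paper. The paper gives no proof here and simply quotes Weyman's Proposition 4.2.3, so the identification is inherited from his Grassmannian and Schur-functor conventions. You instead derive the three isomorphisms directly from the Euler sequence, via $\sQ\cong T_{\PP^n}(-1)$ and $\det\sQ\cong\sO(1)$. You then read off the label from the isotropy representation of $P(\alpha_1)$ on the fiber at $[e_1]$. This has the merit of exposing the convention hidden in the citation: $\mathcal E_\lambda$ is labeled by the highest weight of the \emph{dual} fiber, i.e.\ by minus the lowest weight of the fiber.

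You should state this explicitly rather than speak of a \emph{$B$-extremal} weight, because the choice matters. Take the upper triangular Borel subgroup contained in the stabilizer of $[e_1]$. The lowest weight of $\bigwedge^{n-r}(V/\langle e_1\rangle)$ is $L_{r+2}+\cdots+L_{n+1}=-\lambda_{r+1}$, which gives $\lambda_{r+1}$. Negating the highest weight $L_2+\cdots+L_{n-r+1}$ instead would give $\lambda_1-\lambda_{n-r+1}$. That alternative convention agrees with yours on $\sO(1)$, since a line bundle has a single weight. For $r\geq 1$, however, it is singular in Bott's algorithm, which contradicts $H^0(\bigwedge^{n-r}\sQ)=\bigwedge^{n-r}V\neq 0$. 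So what pins down the convention is your consistency check against case (i) of the algorithm, not the calibration on $\sO(1)$ alone.

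You do not discuss the first sentence of the statement, $\mathcal E_\lambda=\pi_*L_\lambda$. This is the relative Borel--Weil theorem already recalled earlier in the paper, so it may be taken as given.
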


\begin{Remark}
The following isomorphism holds: 
\[
\mathcal E_{\lambda_1}\otimes \mathcal E_{\lambda_2}\cong \mathcal E_{\lambda_1+\lambda_2}.
\]
Thus, for any $k\in \mathbb Z$, the vector bundle $\Omega^r_{\PP^n}(r+1+k)\cong \Omega^r_{\PP^n}(r+1)\otimes \mathcal O_{\PP^n}(k)$ has corresponding weight $k\cdot \lambda_{1}+\lambda_{r+1}$. 
\end{Remark}

The next proposition is a direct consequence of Bott-Borel-Weil theorem for $G = \mathrm{SL}(n+1)$. 

\begin{Proposition}[Bott's Theorem]\label{bott form}
For $q,n,r\in \mathbb Z_{\geq 0}$ and $k\in \mathbb Z$, one has: 
\begin{equation}\label{eq: coh Omega}
\dim_{\mathbb C} H^q(\Omega_{\PP^n}^r(k)):=h^q(\Omega_{\PP^n}^r(k))=\begin{cases}
    \binom{k+n-r}{k}\binom{k-1}{r}\text{ if $q=0\leq r\leq n$ and $k>r$};\\
    1 \text{ if $0\leq q=r\leq n$ and $k=0$};\\
    \binom{-k+r}{-k}\binom{-k-1}{n-r} \text{ if $q=n\geq r\geq 0$ and $k<r-n$};\\
    0 \text{ otherwise.}
    \end{cases}
\end{equation}
\end{Proposition}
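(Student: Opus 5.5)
The plan is to deduce the formula directly from the Bott-Borel-Weil theorem (Theorem \ref{thm: Bott}), using Bott's Algorithm for $G=\SL(n+1)$. First we identify the weight of $\Omega^r_{\PP^n}(k)$. By the Proposition quoted from \cite[Proposition 4.2.3]{Weyman} and the Remark following it, $\Omega^r_{\PP^n}(k)=\Omega^r_{\PP^n}(r+1)\otimes\sO(k-r-1)$ has weight $(k-r-1)\lambda_1+\lambda_{r+1}$. In the coordinates of $\Z^{n+1}$ this is
\[
\lambda=(k-r,\underbrace{1,\ldots,1}_{r},\underbrace{0,\ldots,0}_{n-r}),
\]
so that
\[
\lambda+\delta=(k-r+n,\;n,\;n-1,\ldots,n-r+1,\;n-r-1,\ldots,1,0).
\]
All entries after the first are distinct and form the set $\{0,\ldots,n\}\setminus\{n-r\}$. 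Hence regularity depends only on where $a:=k-r+n$ falls relative to this set.

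The case analysis is then immediate.
\begin{itemize}
\item If $a>n$, i.e.\ $k>r$, then $\lambda+\delta$ is strictly decreasing. It is regular of index $0$, and $H^0$ is the irreducible representation of highest weight $\lambda$, the hook $(k-r,1^r)$.
\item If $a=n-r$, i.e.\ $k=0$, then $a$ must be moved past exactly $r$ entries. The result is $(n,n-1,\ldots,0)=\delta$, so $\beta=0$, and we get $H^r$ equal to the trivial representation, of dimension $1$.
\item If $a<0$, i.e.\ $k<r-n$, then $a$ must be moved past all $n$ entries, so only $H^n$ is nonzero.
\item In every remaining case, $a\in\{0,\ldots,n\}\setminus\{n-r\}$ coincides with some other entry. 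Then $\lambda+\delta$ is singular and all cohomology vanishes. This also covers $k\le r$ with $k\neq 0$ and $k\ge r-n$.
\end{itemize}
The hypotheses $0\le r\le n$ guarantee that these weights make sense, and for $q=0$ we need $k>r$ as stated.

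It remains to compute dimensions. For $H^0$ with $k>r$, we apply the hook-content (or Weyl dimension) formula to the hook $(k-r,1^r)$ for $\mathrm{GL}(n+1)$. Alternatively, and more cleanly, we can induct on $r$ using the exact sequence
\[
0\to\Omega^r(k)\to\textstyle\bigwedge^rV^*\otimes\sO(k-r)\to\Omega^{r-1}(k)\to0
\]
coming from the Euler sequence, together with the vanishing of $H^1(\Omega^r(k))$ for $k>r$ that was just established. Both routes should give $\binom{k+n-r}{k}\binom{k-1}{r}$. I expect this binomial bookkeeping to be the only real computation.

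For $H^n$ we use Serre duality, $H^n(\Omega^r(k))^\ast\cong H^0(\Omega^{n-r}(-k))$, which reduces to the $H^0$ case with $r\mapsto n-r$ and $k\mapsto -k$. The condition $-k>n-r$ is exactly $k<r-n$, and substituting gives $\binom{-k+r}{-k}\binom{-k-1}{n-r}$.
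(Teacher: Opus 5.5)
Your proposal is correct and takes the same route the paper indicates: the paper simply asserts the formula as a direct consequence of the Bott--Borel--Weil theorem for $\SL(n+1)$, and you fill this in via the weight $(k-r,1^r,0^{n-r})$ of $\Omega^r_{\PP^n}(k)$, the regularity analysis of $\lambda+\delta$, and Serre duality for $H^n$. The bookkeeping you deferred also closes: the hook-content formula gives $\frac{(k+n-r)!}{k\,(k-r-1)!\,r!\,(n-r)!}=\binom{k+n-r}{k}\binom{k-1}{r}$.
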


\begin{Definition}\label{def:FO-bundle}
    We now introduce the central vector bundle construction for dealing with singular vector tuples. The {\it Friedland-Ottaviani vector bundle} on $\PP$ is defined by

\begin{equation}
\sE\coloneqq\bigoplus_{i=1}^k\sE_i\,,\quad\sE_i\coloneqq(\pi_i^\ast\mathcal{Q}_i)\otimes\mathcal{O}(1,\ldots,1,\overbracket{0}^i,1,\ldots,1)\quad\forall\,i\in[k]\,.
\end{equation}

\end{Definition}

We have $d:=\mathrm{rank}(\sE)=\dim(\PP)=\sum_{i=1}^k n_i$. For every $i\in[k]$, the tensor $T$ yields a global section of $\sE_i$, which over the point $([\xx_1],\ldots,[\xx_k])\in\PP$ is the map
\[
(\lambda_1\xx_1,\ldots,\lambda_k\xx_k)\in\prod_{i=1}^k\langle\xx_i\rangle \mapsto [T(\lambda_1\xx_1\otimes\cdots\otimes\lambda_{i-1}\xx_{i-1}\otimes\lambda_{i+1}\xx_{i+1}\otimes\cdots\otimes\lambda_k\xx_k)]\in{V_i}/{\langle\xx_i\rangle}\,.
\]
Combining these $k$ sections, the tensor $T$ yields a global section $s_T$ of $\sE$. The tuple $(\xx_1,\ldots,\xx_k)$ is a singular vector $k$-tuple of $T$ if and only if the point $([\xx_1],\ldots,[\xx_k])$ is in the zero locus of the section $s_T$.
The section $s_T$ of $\sE$ yields a homomorphism $\sE^\ast\to\sO$ of sheaves whose image is contained in the ideal sheaf $\sI_{Z_T}$ of the zero locus of $s_T$.

Since the ideal sheaf $\sI_{Z_T}$ is zero-dimensional for a generic choice of $T$, it has a resolution given by the Koszul complex
\begin{equation}\label{eq: Koszul}
0\to \bigwedge^{d}\sE^\ast\xrightarrow{\varphi_{d}}\bigwedge^{d-1}\sE^\ast\xrightarrow{\varphi_{d-1}}\cdots\xrightarrow{\varphi_{2}}\sE^\ast\to \sI_{Z_T}\to 0.\,
\end{equation}

We will split the previous complex into short exact sequences, therefore we define $\sF_{i+1}\coloneqq\left(\bigwedge^{i}\sE^\ast\right)/\mathrm{Im}(\varphi_{i+1})$ and use the shorthand $\sF_i({\bf 1})$ to denote the tensor product $\sF_i\otimes\sO({\bf 1})$. For all integer $r\ge 1$ we use the shorthand $\sE^{(r)}$ to denote $\left(\bigwedge^r\sE^\ast\right)\otimes\sO({\bf 1})$.

\begin{Remark}\label{lem: interpretation of H^0(E^1)}
We recall that it follows from \cite[Section 3]{DOT} that the equations cutting the linear space $\PP(H_T)$ are given by the sections in $H^0(\sE^{(1)})$. 
\end{Remark}

Let  $T\in V$ be a general $(k+1)$-order tensor  of format $\nn=(n_1+1,\ldots,n_{k+1}+1)$. From the Koszul complex \eqref{eq: Koszul} and tensoring  with $\sO({\bf 1})$, for $n=\sum_{i=1}^{k+1} n_i$ we obtain the short exact sequences 
\begin{equation}\label{eq: shortexactsequence}
\begin{gathered}
0\to \sF_{r+1}({\bf 1})\to \sE^{(r)}\to \sF_r({\bf 1})\to 0\  \  \mbox{ for } 2\leq r\leq n, \sF_{d+1}({\bf 1})=0, \mbox{ and} \\
0\to \sF_{2}({\bf 1})\to \sE^{(1)}\to \sI_{Z_T}({\bf 1})\to 0\,.
\end{gathered}
\end{equation}
The long exact sequence in cohomology applied to the short exact sequences above is: 
\begin{equation}\label{longcoh}
\begin{gathered}
\cdots\to H^{r-2}(\sE^{(r)})\to H^{r-2}(\sF_r({\bf 1}))\to H^{r-1}(\sF_{r+1}({\bf 1}))\to H^{r-1}(\sE^{(r)})\to\\
\to H^{r-1}(\sF_r({\bf 1}))\to H^{r}(\sF_{r+1}({\bf 1}))\to H^{r}(\sE^{(r)})\to\cdots
\end{gathered}
\end{equation}
Equipped with this notation, we state useful results about some vanishing of cohomologies of $\sE^{(r)}$.

\begin{Lemma}{\cite[Lemma 3.4]{DOT}}\label{lemma 3.2 DOT}
Recall that $\Omega^r_{\PP(W)}(k)$ is the $\sO(k)$-twisted vector bundle of differential $r$-forms
on $\PP(W)$. One has the following isomorphism:
\begin{equation}\label{eq: iso omega}
\sE^{(r)}\cong\bigoplus_{r_1+\cdots+r_k=r}\bigotimes_{i=1}^k\pi_i^\ast \Omega_{\PP(V_i)}^{r_i}(2r_i+1-r)\,.
\end{equation}
\end{Lemma}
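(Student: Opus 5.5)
The plan is to compute $\bigwedge^r\sE^\ast$ directly from the definition of the Friedland--Ottaviani bundle, using two standard facts. The first is that exterior powers of a direct sum decompose. The second is that the dual of the quotient bundle on a projective space is a twist of the cotangent bundle.

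\textbf{Step 1 (identify $\sQ_i^\ast$).} On $\PP(V_i)$, the tautological sequence $0\to\sO(-1)\to V_i\otimes\sO\to\sQ_i\to 0$, compared with the Euler sequence, gives $\sQ_i\cong T_{\PP(V_i)}(-1)$. Hence $\sQ_i^\ast\cong\Omega_{\PP(V_i)}(1)$, and taking exterior powers,
\[
\textstyle\bigwedge^{s}\sQ_i^\ast\cong\Omega^{s}_{\PP(V_i)}(s).
\]
This agrees with the canonical isomorphism $\Omega^s_{\PP^n}(s)\cong\bigwedge^s\sQ^\ast_{\PP^n}$ recalled from \cite[Proposition 4.2.3]{Weyman}.

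\textbf{Step 2 (exterior powers of $\sE^\ast$).} Since $\sE^\ast=\bigoplus_i\sE_i^\ast$, we have
\[
\textstyle\bigwedge^r\sE^\ast\cong\bigoplus_{r_1+\cdots+r_k=r}\bigotimes_{i=1}^k\bigwedge^{r_i}\sE_i^\ast .
\]
Each summand $\sE_i^\ast=\pi_i^\ast\sQ_i^\ast\otimes\sO(-1,\ldots,-1,0,-1,\ldots,-1)$ is a bundle tensored with a line bundle. Therefore
\[
\textstyle\bigwedge^{r_i}\sE_i^\ast\cong\pi_i^\ast\bigwedge^{r_i}\sQ_i^\ast\otimes\sO(-r_i,\ldots,-r_i,0,-r_i,\ldots,-r_i),
\]
where the $0$ sits in position $i$.

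\textbf{Step 3 (collect twists).} In the tensor product over $i$, consider the $j$-th factor $\PP(V_j)$. It carries $\bigwedge^{r_j}\sQ_j^\ast\cong\Omega^{r_j}(r_j)$ from Step 1. The line-bundle contributions from all $i\neq j$ add a twist of $-\sum_{i\neq j}r_i=-(r-r_j)$. So the $j$-th factor is $\Omega^{r_j}_{\PP(V_j)}(2r_j-r)$. Finally, tensoring with $\sO(\mathbf 1)$ adds $1$ in each factor, which yields $\pi_j^\ast\Omega^{r_j}_{\PP(V_j)}(2r_j+1-r)$, as claimed.

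The argument is essentially formal. The only place requiring care is Step 1: one must check the sign and twist convention for $\sQ_i$ (fibre $V_i/\langle \xx_i\rangle$) so that $\sQ_i^\ast=\Omega(1)$ and not $\Omega(-1)$. One must also correctly count the $-(r-r_j)$ contribution in Step 3 when summing the line-bundle twists over the other factors.
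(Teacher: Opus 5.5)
Your proof is correct and is the standard computation behind \cite[Lemma 3.4]{DOT}; the paper only cites that lemma and gives no proof of its own. The computation is: split $\bigwedge^r\sE^\ast$ over the summands $\sE_i^\ast$, use $\bigwedge^{s}\sQ_i^\ast\cong\Omega^{s}_{\PP(V_i)}(s)$ (which follows from $\sQ_i\cong T_{\PP(V_i)}(-1)$, as you checked), and collect the twist $-(r-r_j)$ from the other factors and $+1$ from $\sO(\mathbf 1)$ on each factor $\PP(V_j)$.
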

\begin{Proposition}\label{lemma: gen coh 1}
Let $V$ be the vector space of $(k+1)$-order tensors of format $\nn=(n_1+1,\dots,n_{k+1}+1)$ with $n_{k+1}>\sum_{i=1}^kn_i$. Given integers $r\geq2$ and $0\leq q\leq r$, then $H^q(\ser)=0$ for $r<k$.
\end{Proposition}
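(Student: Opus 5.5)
The plan is to compute the cohomology of $\ser$ summand by summand. By \Cref{lemma 3.2 DOT}, applied with $k+1$ factors,
\[
\ser\cong\bigoplus_{r_1+\cdots+r_{k+1}=r}\ \bigotimes_{i=1}^{k+1}\pi_i^\ast\Omega^{r_i}_{\PP(V_i)}(2r_i+1-r).
\]
Künneth's formula (\Cref{thm: Kunneth}) then gives $H^q(\ser)$ as a direct sum of products $\bigotimes_i H^{j_i}\bigl(\Omega^{r_i}_{\PP^{n_i}}(2r_i+1-r)\bigr)$ with $|{\bf j}|=q$. It therefore suffices to show that, for every composition $(r_1,\dots,r_{k+1})$ of $r$ with $2\le r<k$ and $0\le r_i\le n_i$, every such product with $\sum_i j_i\le r$ has a vanishing factor. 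The only tool needed factor by factor is Bott's formula, \Cref{bott form}.

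\textbf{Step 1: nonvanishing in one factor.} I first record when a single factor can be nonzero, writing $m_i=2r_i+1-r$ for its twist.
\begin{itemize}
\item If $r_i=0$, the factor is $\sO(1-r)$ with $1-r\le -1$. Its only possibly nonzero group is $H^{n_i}$, and this requires $r\ge n_i+2$.
\item If $r_i>0$, then $H^0\neq 0$ forces $m_i>r_i$, i.e. $r_i=r$.
\item If $r_i>0$, then $H^{r_i}\ne0$ forces $m_i=0$.
\item If $r_i>0$, then $H^{n_i}\neq0$ forces $r_i+n_i<r-1$.
\end{itemize}
In every nonzero case with $r_i>0$ we have $j_i\ge r_i$, except when $j_i=0$, which forces $r_i=r$.

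\textbf{Step 2: the large factor $V_{k+1}$.} Since $n_i\ge1$, the hypothesis gives $n_{k+1}>\sum_{i\le k}n_i\ge k>r$.
\begin{itemize}
\item If $r_{k+1}=0$, nonvanishing needs $r\ge n_{k+1}+2$, which is impossible.
\item The case $H^{n_{k+1}}$ with $r_{k+1}>0$ is impossible for the same reason.
\item If $r_{k+1}=r$, all other $r_i$ vanish. Nonvanishing then forces $j_i=n_i$ for all $i\le k$, so $q\ge\sum_{i\le k}n_i\ge k>r\ge q$, a contradiction.
\end{itemize}
So we may assume $0<r_{k+1}<r$ and $j_{k+1}=r_{k+1}$. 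By Step 1, no $i\le k$ can have $r_i=r$, so every $i\le k$ with $r_i>0$ has $j_i\ge r_i$.

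\textbf{Step 3: counting.} Since $\sum_i r_i=r\le k-1$, at most $k-1$ of the $k+1$ indices have $r_i>0$. Hence at least two indices have $r_i=0$, and by Step 2 neither is $k+1$. Let $Z\subseteq[k]$ be this nonempty set of zero indices. If no factor vanishes, then $j_i=n_i\ge1$ for $i\in Z$, and
\[
q=\sum_i j_i\ \ge\ \sum_{i\in Z}n_i+\sum_{i\notin Z}r_i\ =\ \sum_{i\in Z}n_i+r\ >\ r\ \ge q,
\]
a contradiction. Hence every summand has vanishing $H^q$, and $H^q(\ser)=0$.

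The main obstacle is the bookkeeping in Step 2. One has to rule out the borderline case $r_{k+1}=r$, where an $H^0$ contributes degree $0$, and this is exactly where the beyond-boundary hypothesis $n_{k+1}>\sum_{i\le k}n_i$ enters.
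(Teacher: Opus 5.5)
Your proof is correct and uses the same ingredients as the paper's: the decomposition of \Cref{lemma 3.2 DOT}, K\"unneth, Bott's formula applied factor by factor, and a degree count forcing $q>r$. Only the organization of the cases differs.

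The paper splits according to whether some factor contributes an $H^0$. If one does, that factor has $r_i=r$, every other factor is $\sO(1-r)$ in top degree, and $q\ge k>r$. If none does, every $q_j\ge 1$, because each factor sits either in degree $r_j=(r-1)/2\ge 1$ or in degree $n_j\ge 1$; hence $q\ge k+1>r$. That argument never uses $n_{k+1}>\sum_{i\le k}n_i$, so the vanishing holds for every format with $k+1$ factors.

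You instead use the hypothesis, via $n_{k+1}>r$, to pin down the last factor before counting. This is valid, but your closing remark misplaces the role of the hypothesis. The case $r_{k+1}=r$ is excluded by $\sum_{i\le k}n_i\ge k>r$ alone. The hypothesis only rules out $r_{k+1}=0$ and top-degree cohomology on the last factor. Your Step~3 count would absorb both of those cases anyway: such a factor has $j_{k+1}=n_{k+1}$, which is both $\ge 1$ and $\ge r_{k+1}$.

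A minor wording point: read your third bullet in Step~1 as ``via the twist-zero line of Bott's formula''. When $r_i=n_i$, the group $H^{r_i}=H^{n_i}$ can also be nonzero with $m_i<0$. Your fourth bullet covers that case, so nothing breaks.
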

\begin{proof}
By  \Cref{lemma 3.2 DOT} and  \Cref{thm: Kunneth}, in order for the cohomology $H^q(\ser)$ to be nonzero we need, for some choice of $r_i$'s, each $H^{q_i}(\Omega^{r_i}_{\PP^{n_i}})$ to be nonvanishing. We shall analyse each possibility for this cohomology group using the equalities \eqref{eq: coh Omega}. 

Suppose there exists an index $i$ such $q_i=0$ and $2r_i+1-r>r_i$, thus $r_i=r$. This implies that $r_j=0$ for all $j\neq i$ and so all the other cohomology groups are $H^{q_j}(\Omega^0_{\PP^{n_j}}(1-r))$. Since $1-r<0$, we must have $q_j=n_j$. On the other hand, this implies $q=\sum_{i=1}^{k+1,i\neq j}q_i=\sum_{k+1,i\neq j} n_j\geq k$, as $n_j\geq 1$. This is a contradiction showing that all the nonvanishing cases come from the second and third line
of the equalities \eqref{eq: coh Omega}. 

Suppose there exists an index $i$ such that $0\leq q_i = r_i\leq n_i$, as in the second line of the equalities \eqref{eq: coh Omega}. Then 
$2r_i + 1 - r = 0$, which means $r_i=\frac{r-1}{2}\geq 1$. Thus $q_j\geq 1$ for all $1\leq j\leq k+1$. Therefore $q=\sum_{j=1}^{k+1} q_j\geq k+1$, a contradiction.

The last case is when for all $1\leq i\leq k+1$, one has $q_i=n_i$ as in the third line of the equalities \eqref{eq: coh Omega}. Then $q= \sum_{i=1}^{k+1} q_i = \sum_{i=1}^{k+1} n_i>k+1>r$, a contradiction.  
\end{proof}
\begin{Proposition}\label{lemma: gen coh 2}
Let $V$ be the vector space of $(k+1)$-order tensors of format $\nn=(n_1+1,\dots,n_{k+1}+1)$ with $n_{k+1}>\sum_{i=1}^kn_i$. Given integers $r\geq2$ and $0\leq q\leq r$, assuming $k\leq r\leq \sum_{i=1}^kn_i-1$ then:
\begin{enumerate}
    \item[(i)]  for $k+1\geq 4$ it holds that $H^q(\ser)=0$; 
 \item[(ii)] for $k+1=3$, $r> 3$ and odd, one has the nonvanishing cohomology group $H^r(\ser)=H^1(\Omega^1_{\PP^1}(3-r))\otimes H^{(r-1)/2}\left(\Omega^{(r-1)/2}_{\PP^{n_2}}\right)\otimes H^{(r-1)/2}\left(\Omega^{(r-1)/2}_{\PP^{n_3}}\right)$. If $r=3$, also $H^3(\sE^{(3)})=H^1(\Omega^{1}_{\PP^{n_1}})\otimes H^1(\Omega^{1}_{\PP^{n_2}})\otimes H^1(\Omega^{1}_{\PP^{n_3}})$ is nonvanishing.
\end{enumerate}
\end{Proposition}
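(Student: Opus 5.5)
The plan is to follow the same strategy as in the proof of Proposition~\ref{lemma: gen coh 1}. By Lemma~\ref{lemma 3.2 DOT} and the K\"unneth formula (Theorem~\ref{thm: Kunneth}), $H^q(\ser)$ is a direct sum, over tuples $r_1+\cdots+r_{k+1}=r$ and $q_1+\cdots+q_{k+1}=q$, of the tensor products $\bigotimes_i H^{q_i}(\Omega^{r_i}_{\PP^{n_i}}(2r_i+1-r))$. A summand is nonzero only if every factor falls into one of the three nonvanishing lines of \eqref{eq: coh Omega}:
\begin{itemize}
\item[(A)] $q_i=0$ and $2r_i+1-r>r_i$;
\item[(B)] $q_i=r_i$ and $2r_i+1-r=0$;
\item[(C)] $q_i=n_i$ and $2r_i+1-r<r_i-n_i$.
\end{itemize}
I will classify the admissible configurations, using throughout the two inequalities $q\le r$ and $r+1\le\sum_{i\le k}n_i<n_{k+1}$.

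First I exclude type (A). It forces $r_i>r-1$, hence $r_i=r$ and $r_j=0$ for $j\neq i$. The other factors then have twist $1-r<0$, so they must be of type (C), and $q=\sum_{j\ne i}n_j$. If $i\neq k+1$, this sum contains $n_{k+1}>r$. If $i=k+1$, it equals $\sum_{j\le k}n_j\ge r+1$. Either way $q>r$, a contradiction. Next I exclude the case where all factors are of type (C): it gives $q=\sum n_i>r$.

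So some factor is of type (B), which forces $r$ odd and $r_i=(r-1)/2$. Let $S$ be the set of (B)-factors and $C$ the set of (C)-factors. Then
\[
q-r=\sum_{j\in C}(n_j-r_j).
\]
For $j\in C$, nonvanishing of $\Omega^{r_j}_{\PP^{n_j}}$ requires $r_j\le n_j$, so every term is nonnegative. Since $q\le r$, we get $n_j=r_j$ for all $j\in C$ and $q=r$. Substituting $n_j=r_j$ into the (C)-inequality gives $1\le r_j<(r-1)/2$.

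Now I locate the index $k+1$. If $k+1\in C$, then $n_{k+1}=r_{k+1}\le r$, contradicting $n_{k+1}>r$. Hence $k+1\in S$, and the remaining $k$ factors have $\sum_{j\le k}r_j=(r+1)/2$. If none of them is in $S$, then $\sum_{j\le k}n_j=\sum_{j\le k} r_j=(r+1)/2<r+1$, contradicting the hypothesis. So some $j_0\le k$ also lies in $S$. This leaves total $r$-weight $1$ for the other $k-1$ factors. Each of these is of type (C), so it has $r_j=n_j\ge1$, and there can be at most one of them. Therefore $k-1\le 1$, i.e.\ $k+1\le3$; there is also the boundary possibility of three (B)-factors with $3(r-1)/2=r$, i.e.\ $r=3$.

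This proves (i): for $k+1\ge4$ no configuration survives. For $k+1=3$ the survivors are the following.
\begin{itemize}
\item Exactly one (C)-factor with $n_j=r_j=1$ and two (B)-factors. Here the (C)-condition $1<(r-1)/2$ forces $r>3$, and the factor is $H^1(\Omega^1_{\PP^1}(3-r))$, tensored with $H^{(r-1)/2}(\Omega^{(r-1)/2})$ on the other two factors. This requires the smallest factor to be $\PP^1$, matching the statement with the factor labelled $n_1$.
\item For $r=3$, all three factors of type (B) with $r_i=q_i=1$.
\end{itemize}
This gives (ii).

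The main obstacle I expect is the bookkeeping in the (B)/(C) mixing step. Squeezing $q-r\ge0$ against $q\le r$ is what pins down $n_j=r_j$ on the (C)-factors. The locating argument must use both $n_{k+1}>r$ and $\sum_{i\le k}n_i\ge r+1$ in exactly the right places, so that the large factor is of type (B) and at least one small factor is of type (B) as well.
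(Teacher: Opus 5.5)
Your proof is correct and follows essentially the same strategy as the paper. Both arguments apply K\"unneth plus Bott to the factor decomposition, rule out any factor with $q_i=0$ and the configuration with all $q_i=n_i$, and then analyse the factors with $q_i=r_i=(r-1)/2$.

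The only difference is the final bookkeeping. The paper bounds the number $\ell$ of such factors by $\ell(r-1)/2\le r$ and checks $\ell=1,2,3$ one by one. Your identity $q-r=\sum_{j\in C}(n_j-r_j)$ instead forces $q=r$ and $r_j=n_j$ on the remaining factors at once. After that, placing $k+1$ and some $j_0\le k$ in $S$ leaves weight $1$ for the other $k-1$ factors, each of weight at least $1$ whether of type (B) or (C).
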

\begin{proof}

Suppose there exists an index $i$ such that $q_i=0$. Thus $r_i=r$ and $r_j=0$ for all $j\neq i$. Since $r_j=0$, for a nonvanishing cohomology group
we need $q_j=n_j$ for all $j\neq i$. Thus $q=\sum^{k+1}_{j=1, j\neq i} q_j= \sum^{k+1}_{j=1, j\neq i} n_j\geq \sum^{k+1}_{j=1, j\neq i} n_j>r$, where the last
inequality is by assumption. 

As in the proof of \Cref{lemma: gen coh 1}, the case when for all $1\leq i\leq k+1$, one has $q_i=n_i$, cannot appear. Thus there exists an index 
$i$ as in the second line of the equalities \eqref{eq: coh Omega}. For such an index, $q_i=r_i=\frac{r-1}{2}\geq 1$. Hence $r$ must be odd in this case. 

Let $1\leq \ell\leq k+1$ be the number of such indices. Hence $\ell\cdot (r-1)/2\leq r$ with $r\geq 3$. Therefore 
\[
\ell \leq \frac{2r-2+2}{r-1} = 2 + \frac{2}{r-1}\leq 3. 
\] 
Note that $\ell = 3$ if and only if $2/(r-1)=1$ if and only if $r=3$, and $\ell \leq 2$ if and only if $r>3$.

When $r=3$, since there are $\ell=3$ indices such that $q_i\neq 0$ and $\sum_{i=1}^{k+1} q_i \leq r = 3$, we have $k+1=3$ and so the format must be $\nn=(n_1+1,n_2+1,n_3+1)$. Thus the cohomology group we find is $H^3(\sE^{(3)})=H^1(\Omega^{1}_{\PP^{n_1}})\otimes H^1(\Omega^{1}_{\PP^{n_2}})\otimes H^1(\Omega^{1}_{\PP^{n_3}})$. 

Now, we have two cases: either $\ell = 1$ or $\ell=2$. If $\ell=1$, then $q_j=n_j$ for all $j\neq i$. Thus $q=\sum_{j=1}^{k+1} q_j=\sum^{k+1}_{j=1,j\neq i}n_j+r_i\geq \sum_{j=1}^kn_k+1>r$, a contradiction. 

If $\ell=2$, then let $i, j$ be two such indices with $r_i=(r-1)/2=r_j$. This implies that $q=\sum_{s=1}^{k+1} q_s= r-1+\sum_{s\neq i,j} n_s\geq r$. Moreover, equality holds only when $k=2$ and $n_s=1$ for $s\neq i,j$. In such a case, upon renaming the indices, we have $q_3 = 1, q_1 = q_2 = (r-1)/2$.
This case gives either $r_3=0$ and $q_3=n_3=1$, or $1=q_3=r_3\leq n_3$. In the first case, the cohomology vanishes. In the second case, the cohomology does not vanish if and only if the cohomology group is 
\[
H^r(\ser)=H^1(\Omega_{\PP^1}(3-r))\otimes H^{(r-1)/2}(\Omega^{(r-1)/2}_{\PP^{n_2}})\otimes H^{(r-1)/2}(\Omega^{(r-1)/2}_{\PP^{n_3}}),
\]
when $r>3$ and odd. If $r=3$, the cohomology group does not vanish if and only if it is 
\[
H^3(\sE^{(3)})=H^1(\Omega^{1}_{\PP^{n_1}})\otimes H^1(\Omega^{1}_{\PP^{n_2}})\otimes H^1(\Omega^{1}_{\PP^{n_3}}).
\]
This concludes the proof. 
\end{proof}
\begin{Proposition}\label{lemma: gen coh 3}
Let $V$ be the vector space of $(k+1)$-order tensors of format $\nn=(n_1+1,\dots,n_{k+1}+1)$ with $n_{k+1}>\sum_{i=1}^kn_i$. Given nonnegative integers $r\geq2$, $q\leq r$ and $ r\geq \sum_{i=1}^kn_i=n$, then:
\begin{enumerate}
    \item[(i)] for $q<r$ and $q\neq n$ it holds $H^q(\ser)=0$;
    \item[(ii)] for $q=n$ we have that $H^n(\ser)=\left(\bigotimes_{j=1}^kH^{n_j}(\Omega^{0}_{\PP^{n_j}}(1-r))\right)\otimes H^{0}(\Omega^{r}_{\PP^{n_{k+1}}}(r+1))$. Moreover, if  $r=n$, this vanishes if $k=2$ and $n_i=1$ for some $i=1,2$;
    \item[(iii)]for $k+1=3$, $r> 3$ and odd, one has the nonvanishing cohomology group $H^r(\ser)=H^1(\Omega^1_{\PP^1}(3-r))\otimes H^{(r-1)/2}\left(\Omega^{(r-1)/2}_{\PP^{n_2}}\right)\otimes H^{(r-1)/2}\left(\Omega^{(r-1)/2}_{\PP^{n_3}}\right)$. If $r=3$, also $H^3(\sE^{(3)})=H^1(\Omega^{1}_{\PP^{n_1}})\otimes H^1(\Omega^{1}_{\PP^{n_2}})\otimes H^1(\Omega^{1}_{\PP^{n_3}})$ is nonvanishing;
    
    \item[(iv)] for $r\geq 3$ and odd, for $\sum_{i=1}^k n_i+\frac{r-1}{2}=r$, one has the nonvanishing cohomology group $H^r(\ser)=\left(\bigotimes^{k}_{i=1}H^{n_i}(\Omega^{n_i}_{\PP^{n_i}}(2n_i-r+1))\right)\otimes H^{(r-1)/2}(\Omega^{(r-1)/2}_{\PP^{n_{k+1}}})$. 
 
\end{enumerate}

\end{Proposition}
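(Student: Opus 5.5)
The plan follows the scheme of the proofs of \Cref{lemma: gen coh 1} and \Cref{lemma: gen coh 2}. By \Cref{lemma 3.2 DOT} and \Cref{thm: Kunneth}, $H^q(\ser)$ is a direct sum over tuples $(r_1,\dots,r_{k+1})$ with $\sum r_i=r$ and $(q_1,\dots,q_{k+1})$ with $\sum q_i=q$ of the tensor products $\bigotimes_i H^{q_i}(\Omega^{r_i}_{\PP^{n_i}}(2r_i+1-r))$. So it suffices to decide, for each index $i$, which line of \eqref{eq: coh Omega} produces a nonzero factor. The three options for each index are:
\begin{itemize}
\item[(A)] $q_i=0$ and $2r_i+1-r>r_i$, i.e.\ $r_i\ge r$, which forces $r_i=r$ and $r_j=0$ otherwise;
\item[(B)] $q_i=r_i=\frac{r-1}{2}$, which requires $r$ odd;
\item[(C)] $q_i=n_i$ and $2r_i+1-r<r_i-n_i$, i.e.\ $r_i<r-1-n_i$.
\end{itemize}
The difference from the previous propositions is that now $r\ge n=\sum_{i=1}^k n_i$, so the inequality $q>r$ used before to discard case (A) and case (C) is no longer automatic. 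This is what produces the new groups in (ii) and (iv).

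I would treat case (A) first. If index $i$ is of type (A), all other indices have $r_j=0$ and twist $1-r<0$, so they must be of type (C) with $q_j=n_j$. If $i\le k$, then $q\ge n_{k+1}>n$, and also $q>r$ once one checks $n_{k+1}\ge r$; this needs $r\le n_{k+1}$, which holds since $\ser=0$ for $r>\mathrm{rank}=n+n_{k+1}$. The case $r>n_{k+1}$ must be checked separately: there $\Omega^r_{\PP^{n_{k+1}}}=0$ when $r>n_{k+1}$, which kills it. If $i=k+1$, we get exactly $q=n$ and the group in (ii). For the vanishing statement when $r=n$, $k=2$ and some $n_i=1$, I would compute the factor $H^{n_i}(\Omega^0_{\PP^{n_i}}(1-r))$ via the third line of \eqref{eq: coh Omega}. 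It needs $1-r<-n_i$, i.e.\ $r>n_i+1$. With $r=n=n_1+n_2$ and $n_j=1$ this fails for the other index. I expect some bookkeeping here to match the paper's exact exceptional condition.

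Next I would handle configurations with no index of type (A). Let $\ell\ge 1$ be the number of indices of type (B); the case with all indices of type (C) gives $q=n+n_{k+1}>r$, so it is excluded. As before, $\ell(r-1)/2\le r$ forces $\ell\le 3$, with $\ell=3$ only when $r=3$.
\begin{itemize}
\item For $\ell=3$: with $k+1=3$ this recovers the $r=3$ group in (iii).
\item For $\ell=2$: the count $q=r-1+\sum_{s\ne i,j}n_s\le r$ forces $k=2$ and a remaining $n_s=1$. This gives the $r>3$ group in (iii), now using $r_s=1$ with twist $3-r$.
\item For $\ell=1$ with $i=k+1$: the others are of type (C), giving $q=n+\frac{r-1}{2}$. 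This is $\le r$ precisely when $n+\frac{r-1}{2}=r$, given that $q\le r$ and we are looking at $q=r$; this is (iv). The constraints $r_j<r-1-n_j$ must be verified with $\sum_{j\le k}r_j=\frac{r+1}{2}$. The choice $r_j=n_j$ gives twist $2n_j-r+1$, which matches (iv); I would check that this is the only admissible choice and that the top-degree cohomology is nonzero.
\item For $\ell=1$ with $i\le k$: $q\ge n_{k+1}+\frac{r-1}{2}>r$, since $n_{k+1}>n\ge\dots$.
\end{itemize}
For (i), I would note that every surviving group has $q\in\{n,r\}$, so all others vanish.

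The main obstacle is the delicate inequality bookkeeping in the boundary subcases. These are $r$ close to $n_{k+1}$ in case (A) with $i\le k$, and the precise vanishing criterion in (ii), where the constraint $2r_i+1-r<r_i-n_i$ interacts with $r\ge n$. I would verify these cases directly with the explicit binomial formulas of \Cref{bott form}.
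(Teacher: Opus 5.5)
You follow the same route as the paper: Künneth plus Bott, sort each index by the line of Bott's formula it uses, treat a type (A) index first, then count the type (B) indices with $\ell\le 3$. But there is a real gap. Your list (A), (B), (C) drops the range condition $0\le r_i\le n_i$ that every nonzero line of Bott's formula carries: the summand vanishes when $r_i>n_i$, since $\Omega^{r_i}_{\PP^{n_i}}=0$. Without it, two steps that (i) depends on fail.

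\emph{Type (A) at $i\le k$.} The claim that $r\le n_{k+1}$ ``since $\ser=0$ for $r>n+n_{k+1}$'' does not follow. In the subcase $r>n_{k+1}$, the factor on $\PP^{n_{k+1}}$ is $\Omega^0_{\PP^{n_{k+1}}}(1-r)$, not $\Omega^r_{\PP^{n_{k+1}}}$, so nothing is killed. Such configurations have $q=n-n_i+n_{k+1}$, which is $<r$ and $\neq n$ as soon as $r>n-n_i+n_{k+1}$. For instance, format $(3,3,6)$ with $r=8$ and $(r_1,r_2,r_3)=(8,0,0)$ passes all your tests with $q=7$. The paper settles this in one line: $r_i=r\ge n>n_i$ because $k\ge 2$, so $\Omega^{r}_{\PP^{n_i}}=0$, which forces $i=k+1$.

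\emph{The case $\ell=1$, $i=k+1$.} You assert that $q=n+\frac{r-1}{2}\le r$ holds ``precisely when'' $n+\frac{r-1}{2}=r$. This is false: $q\le r$ only gives $r\ge 2n-1$. For odd $r>2n-1$ there are configurations with $n\ne q<r$ that your twist inequalities $r_j<r-1-n_j$ do not exclude. For example, format $(3,3,8)$ with $r=9$ and $(r_1,r_2,r_3)=(3,2,4)$ gives $q=8$. The step you postponed (``I would check that this is the only admissible choice'') is exactly where the range condition is needed. From $\sum_{j\le k}r_j=\frac{r+1}{2}$ and $r_j\le n_j$ you get $\frac{r+1}{2}\le n$. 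Together with $q\le r$ this forces $r=2n-1$, $r_j=n_j$ and $q=r$, which is (iv).

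Once you add $r_i\le n_i$, the rest of your case analysis goes through and agrees with the paper. Two smaller inaccuracies are fixed the same way:
\begin{enumerate}
\item Your claim that ``all indices of type (C) gives $q=n+n_{k+1}>r$'' is false at $r=n+n_{k+1}$. The paper instead shows that index $k+1$ is never of type (C): $r_{k+1}<r-1-n_{k+1}$ would force $\sum_{\ell\le k}r_\ell>n_{k+1}+1>n$.
\item In the case $\ell=1$, $i\le k$, your bound $q\ge n_{k+1}+\frac{r-1}{2}>r$ becomes an equality when $r=n+n_{k+1}$ and $n_{k+1}=n+1$. There you need the exact value $q=\frac{r-1}{2}+(n-n_i)+n_{k+1}$.
\end{enumerate}
Your vanishing check for the last clause of (ii) is correct.
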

\begin{proof}

Suppose there exists some index $i$ such that $q_i=0$. 
Since we must have $2r_i+1-r>r_i$, we find $r_i=r$.  Thus $r_j=0$ for all $j\neq i$. 

Since $\sum_{\ell=1}^{k+1} r_{\ell}=r$ and $r\geq \sum_{\ell=1}^kn_{\ell}$, then $r> n_{\ell}$ for all $1\leq \ell\leq k$. Thus $i=k+1$. 
Then we find cohomology groups of the form 
$H^{q_j}(\Omega^{0}_{\PP^{n_j}}(1-r))$ for all $j\neq k+1$. Then they are nonzero if and only if $q_j=n_j$ for all $j\neq k+1$. Therefore the only nonzero cohomology group is $H^{n}(\ser)=\left(\bigotimes_{j=1}^kH^{n_j}(\Omega^{0}_{\PP^{n_j}}(1-r))\right)\otimes H^{0}(\Omega^{r}_{\PP^{n_{k+1}}}(r+1))$, which is (ii). 

Suppose that $q_i = n_i$ for all $1\leq i\leq k+1$, i.e. for every index the factor comes from the third line of equalities \eqref{eq: coh Omega}. Since $q=\sum_{i=1}^{k+1}q_i\leq r$ by assumption and $n_i=q_i\leq r_i\leq n_i$, then $q=r=\sum_{i=1}^{k+1}n_i$. In such a case, in the $(k+1)$-th factor we have the vector bundle $\Omega^{n_{k+1}}_{\PP^{n_{k+1}}}(2n_{k+1}-r+1)$. The latter has nonvanishing cohomology only if $2n_{k+1}-r+1<n_{k+1}-n_{k+1}=0$. This implies $n_{k+1}+1-n<0$, a
contradiction. 

Therefore there exists an index 
$i$ satisfying in the second line of the equalities \eqref{eq: coh Omega}. Let $s\geq 1$ be the number of such indices. As in \Cref{lemma: gen coh 2} we find $s\leq 3$, $s = 3$ if and only if $r=3$, and $s\leq 2$ otherwise. The cases $s=2$ and $s=3$ give (iii). 

Suppose $s=1$. Then all other indices satisfy the third line of the equalities \eqref{eq: coh Omega}. This means $q_j=n_j\geq r_j$ for all $j\neq i$ and $q_i=(r-1)/2$.

Note that the condition $2r_{k+1}+1-r<r_{k+1}-n_{k+1}$ implies $n_{k+1}<\sum^k_{\ell =1}r_{\ell}-1$ and so $n_{k+1}<\sum_{\ell=1}^kr_{\ell}\leq n$, a contradiction. Then the index $k+1$ cannot satisfy the third line of equalities \eqref{eq: coh Omega}. Hence $i=k+1$.

At this point, we have  $\sum_{\ell=1}^{k+1}r_{\ell}=r=\sum_{\ell=1}^kr_{\ell}+(r-1)/2$ and $\sum_{\ell=1}^{k+1}q_{\ell}=q=\sum_{\ell=1}^kn_{\ell}+(r-1)/2$. Since $q\leq r$ and $n_{\ell}\geq r_{\ell}$ we find $r_{\ell}=n_{\ell}$. This implies that the only nonvanishing cohomology group appears for $q=r$ odd and $H^r(\ser)=\left(\bigotimes_{i=1}^kH^{n_i}(\Omega_{\PP^{n_i}}^{n_i}(2n_i+1-r)) \right)\otimes H^{(r-1)/2}(\Omega^{(r-1)/2}_{\PP^{n_{k+1}}})$, such that $\sum_{i=1}^k n_i+(r-1)/2=r$. This is the case (iv). 
\end{proof}

The previous lemmas implications are summarized in the next corollary.

\begin{Corollary}\label{coh: chains}
Let $V$ be the vector space of $(k+1)$-order tensors of format $\nn=(n_1+1,\dots,n_{k+1}+1)$ with $n_{k+1}>\sum_{i=1}^kn_i$ and $k+1\geq 4$, let $n=\sum_{i=1}^k n_i$. Then the following chains of isomorphisms and inclusions hold:
\begin{enumerate}
\item[(i)] $H^0(\sF_2({\bf 1}))\cong\cdots\cong H^{n-2}(\sF_{n}({\bf 1}))$; 

\item[(ii)] $H^1(\sF_2({\bf 1}))\cong\cdots\cong H^{n-1}(\sF_n({\bf 1}))\subset H^{n}(\sF_{n+1}({\bf 1}))$; 

\item[(iii)] $H^{n+1}(\sF_{n+3}({\bf 1}))\cong\cdots\cong H^{N-1}(\sF_{N+1}({\bf 1}))=0$, 
where $N=\sum_{i=1}^{k+1} n_i$. 

\item[(iv)]
$H^{n+2}(\sF_{n+2}({\bf 1}))\subset\cdots\subset H^{N}(\sF_{N+1}({\bf 1}))=0$, 
where $N=\sum_{i=1}^{k+1} n_i$. 
\end{enumerate}
\begin{proof}
By \Cref{lemma: gen coh 1}, $H^{r-2}(\sE^{(r)}) = H^{r-1}(\sE^{(r)}) = 0$
for $2\leq r\leq k-1$. Thus the exact sequence \eqref{longcoh} implies (i) and (ii), where the last inclusion is to be understood as an injective map induced by the exact sequence. Similarly, \Cref{lemma: gen coh 2} for $k+1\geq 4$ implies (iii). \Cref{lemma: gen coh 3} implies that $H^r(\sE^{(r+1)})=0$ for $r\geq n+1$, then the map $H^r(\sF_{r+1}({\bf 1}))\to H^{r+1}(\sF_{r+2}({\bf 1}))$ is injective.
\end{proof}
\end{Corollary}

\section{A map in cohomology giving the codimension of $\langle Z_T\rangle$ in $\PP(H_T)$: Case $k+1\geq 4$ factors}

Using cohomological maps from the sequence \eqref{eq: Koszul}, we shall find the information about the codimension of $\langle Z_T\rangle$ in $\PP(H_T)$. 
This will be encoded in a map $\alpha_T:H^n(\sE^{(n+1)})\to H^n(\sE^{(n)})$ induced by the Koszul map.

\begin{Proposition}\label{prop:dim k+1-factors}
Let $n=\sum_{j=1}^kn_j$ and let $V$ be a $(k+1)$-tensor space $V$ of format $(n_1+1,\dots,n_k+1,n+2)$ with $k\geq 3$.  Let $T\in V$ be a general tensor.
Then there exists an induced map $\alpha_T:H^n(\sE^{(n+1)})\to H^n(\sE^{(n)})$ such that the codimension of $\langle Z_T\rangle$ inside $\PP(H_T)$ is the given by the dimension of the kernel of $\alpha_T$. 
\begin{proof}
From \Cref{lemma: gen coh 1}, \Cref{lemma: gen coh 2}, \Cref{lemma: gen coh 3} we have that 
\begin{align*}
    H^0(\sF_2({\bf 1}))\cong\dots\cong H^{2n-1}(\sF_{2n+1}({\bf 1}))=0;\\
    H^1(\sF_2({\bf 1}))\cong\dots\cong H^{n-1}(\sF_{n}({\bf 1}))\subset H^n(\sF_{n+1}({\bf 1}));\\
    H^{n+1}(\sF_{n+2}({\bf 1}))\subset\dots\subset H^n(\sF_{2n+1}({\bf 1}))= 0.
\end{align*}
We use the long exact sequence of cohomologies from \eqref{eq: shortexactsequence}. The sequence 

\begin{align*}
    \dots\to H^{n-1}(\sE^{(n+1)})\to H^{n-1}(\sF_{n+1}({\bf 1}))\to H^n(\sF_{n+2}({\bf 1}))\to \\
    \to H^n(\sE^{(n+1)})\to H^{n}(\sF_{n+1}({\bf 1}))\to H^{n+1}(\sF_{n+2}({\bf 1}))\to\dots
\end{align*}

gives that $H^{n-1}(\sF_{n+1}({\bf 1}))=0$ and $H^n(\sF_{n+1}({\bf 1}))\cong H^n(\sE^{(n+1)})\cong\left(\bigotimes_{j=1}^kH^{n_j}(\Omega_{n_j}^0(-n))\right)\otimes H^0\left(\Omega_{n+2}^{n+1}(n+2)\right)$. The next step is to look at the sequence 

\begin{align*}
    \dots\to H^{n-2}(\sE^{(n)})\to H^{n-2}(\sF_{n}({\bf 1}))\to H^{n-1}(\sF_{n+1}({\bf 1}))\to H^{n}(\sF_{n+1}({\bf 1}))\to\\
    \to H^{n-1}(\sE^{(n)})\to H^{n-1}(\sF_{n}({\bf 1}))\to H^{n}(\sF_{n+1}({\bf 1}))\xrightarrow{\beta}H^n(\sE^{(n)})\to\dots
\end{align*}
It implies that $H^{n-2}(\sF_n({\bf 1}))=0$, moreover we have from $$0\to H^{n-1}(\sF_{n}({\bf 1}))\to H^{n}(\sF_{n+1}({\bf 1}))\xrightarrow{\alpha}H^n(\sE^{(n)})$$ that $h^{n-1}(\sF_{n}({\bf 1}))=\dim(\ker(\beta))$, moreover $H^n(\sE^{(n)})\cong \left(\bigotimes_{j=1}^kH^{n_j}(\Omega^0_{n_j}(-n+1))\right)\otimes H^0\left(\Omega^n_{n+1}(n+1)\right)$.

Then consider the map $\alpha_T$ defined as the composition of the maps in the diagram:
\[
\begin{tikzcd}
H^n(\sE^{(n+1)}) \arrow[rr, "\alpha_T"] \arrow[rd, "\cong"] & & H^n(\sE^{(n)}) \\
& H^n(\sF_{n+1}({\bf 1})) \arrow[ru, "\beta"] &               
\end{tikzcd}
\]
Note that $\dim(\ker(\beta)) = \dim(\ker(\alpha_T))$. Recall that $H^{n-1}(\sF_n({\bf 1}))\cong H^1(\sF_2({\bf 1}))$ and that we have 
the sequence
\[
0\to \sF_{2}({\bf 1})\to \sE^{(1)}\to \sI_{Z_T}({\bf 1})\to 0\,
\]
which induces 
\[
0\rightarrow H^0(\sE^{(1)})\rightarrow H^0(\mathcal I_Z({\bf 1}))\rightarrow H^1(\sF_2({\bf 1}))\rightarrow 0. 
\]
By \Cref{lem: interpretation of H^0(E^1)}, we see that $h^1(\sF_2({\bf 1})) = h^{n-1}(\sF_n({\bf 1}))$ is the codimension of $\langle Z_T\rangle$ in $\PP(H_T)$. 
The latter was shown to be equal to $\dim(\ker(\alpha_T))$. 
\end{proof}
\end{Proposition}

\subsection{Spelling out the map in cohomology}
In the previous section, we showed that the codimension of $\langle Z_T\rangle$ in $H_T$ is provided by the kernel of the map $\alpha_T:H^n(\sE^{(n+1)})\to H^n(\sE^{(n)}$. In order to better understand the structure we aim to make $\alpha_T$ more explicit in terms of representation theory. 

To do so we need to use Bott's Theorem and the theory of weights. We also remark that the analysis is valid and useful later for the three-factors case in \Cref{sec: k=2} when $n_1,n_2\geq2$, thus in this subsection we assume  $k+1\geq 3$. The case $n_1=1$ is special and is treated separately in Theorem \ref{Thm: (2,n,n+2)}.

\begin{Lemma}
We have that 
\[
\alpha_T:\bigotimes_{j=1}^k \sS^{n-n_j-1}\C^{n_j+1}\longrightarrow \left(\bigotimes_{j=1}^k\sS^{n-n_j-2}\C^{n_j+1}\right)\otimes\C^{n+2}.
\]
\begin{proof}
Recall that 
\[
H^n(\sE^{(n+1)})\cong\left(\bigotimes_{j=1}^kH^{n_j}(\Omega_{n_j}^0(-n))\right)\otimes H^0\left(\Omega_{n+2}^{n+1}(n+2)\right)
\] 
and
\[
H^n(\sE^{(n)})\cong \left(\bigotimes_{j=1}^kH^{n_j}(\Omega^0_{n_j}(-n+1))\right)\otimes H^0\left(\Omega^n_{n+1}(n+1)\right).
\] 
The weights associated to $\Omega^{n+1}_{n+1}(n+2)$ and $\Omega^{n}_{n+1}(n+1)$ are respectively $\lambda_{n+2}$ and $\lambda_{n+1}$, thus those spaces are isomorphic respectively to $\bigwedge^{n+2}\C^{n+2}\cong\C$ and $\bigwedge^{n+1}\C^{n+2}\cong\C^{n+2}$. On the other hand, $\Omega^0_{n_j}(-t)=\sO_{n_j}(-t)$, thus by Serre's duality we have that $H^{n_j}(\sO_{n_j}(-t))=H^0(\sO(t-n_j-1))\cong \sS^{t-n_j-1}\C^{n_j+1}$.
\end{proof}
\end{Lemma}

Now to derive the actual definition of $\alpha_T$, we need to take a step back and define an $\SL(n_1+1)\times\dots\times\SL(n_1+1)\times \SL(n+2)$-equivariant map $\alpha$ describing the functorial change of the Koszul map $\alpha_T$ as $T$ varies.

We first show the uniqueness of the $\SL(n_1+1)\times\dots\times\SL(n_1+1)\times \SL(n+2)$-equivariant map up to identifications given by Bott's Theorem. 

\begin{Lemma}
Suppose $k\geq3$ or $n_1,n_2\geq2$ otherwise. There is a unique, up to scalar multiplication, non-zero  $\SL(n_1+1)\times\dots\times \SL(n_k+1)\times \SL(n+2)$-equivariant map $$\bigotimes_{j=1}^k \left(\sS^{n-n_j-1}\C^{n_j+1}\right)\otimes(\C^{n_1+1}\otimes\dots\otimes \C^{n+2})^\ast\to \left(\bigotimes_{j=1}^k\sS^{n-n_j-2}\C^{n_j+1}\right)\otimes\C^{n+2},$$
    where $n=\sum n_i$.
\end{Lemma}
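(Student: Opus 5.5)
Equivariant maps $A\otimes B^\ast\to C$ correspond bijectively to invariants in $\mathrm{Hom}(A\otimes B^\ast, C)^{G}\cong \mathrm{Hom}_G(A, B\otimes C)$, where $G=\SL(n_1+1)\times\dots\times\SL(n_k+1)\times \SL(n+2)$, $A=\bigotimes_j \sS^{n-n_j-1}\C^{n_j+1}$ and $C=\left(\bigotimes_j\sS^{n-n_j-2}\C^{n_j+1}\right)\otimes\C^{n+2}$. Here we use the Frobenius identification of Definition \ref{def: Frobenius inner product} to replace $B^\ast$ by $B=\C^{n_1+1}\otimes\dots\otimes\C^{n+2}$. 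The plan is to compute $\dim\mathrm{Hom}_G(A\otimes B^\ast,C)$ factor by factor, since $G$ is a product and all modules are external tensor products. By Schur's lemma the dimension is the product of the individual multiplicities.

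For the last factor $\SL(n+2)$, the source contributes $(\C^{n+2})^\ast$ and the target contributes $\C^{n+2}$; after the Frobenius identification both are the standard representation. The relevant space is therefore $\mathrm{Hom}_{\SL(n+2)}(\C^{n+2},\C^{n+2})$, which is one-dimensional.

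For each factor $j\le k$, we need $\dim\mathrm{Hom}_{\SL(n_j+1)}\big(\sS^{a}W\otimes W, \sS^{a-1}W\big)$, where $W=\C^{n_j+1}$ and $a=n-n_j-1$. By Pieri's rule, $\sS^aW\otimes W\cong \sS^{a+1}W\oplus \mathbb S_{(a,1)}W$, and neither summand is $\sS^{a-1}W$ as a $\GL$-module. As an $\SL$-module, however, $\mathbb S_{(a,1)}W\cong\sS^{a-1}W$ exactly when $\dim W=2$: in that case $(a,1)\sim(a-1,0)$ modulo the determinant. On the other hand, $\sS^{a+1}W\cong \sS^{a-1}W$ never holds for $\dim W\ge 2$, because their dimensions differ. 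If the identification with $B^\ast$ is kept instead of $B$, then $\sS^aW\otimes W^\ast\cong \sS^{a-1}W\oplus \mathbb S_{(a,0,\dots,0,-1)}W$ by contraction. In that version the contraction map $\sS^aW\otimes W^\ast\to\sS^{a-1}W$ gives exactly one copy, provided $a\ge1$, and the second summand is not isomorphic to $\sS^{a-1}W$ because its highest weight differs.

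So the Frobenius-dual reading is the one in which each factor gives multiplicity exactly one. The map is then the tensor product of the contractions $\partial:\sS^{a_j}\C^{n_j+1}\otimes(\C^{n_j+1})^\ast\to \sS^{a_j-1}\C^{n_j+1}$ with the identity on the last factor. This is indeed how $T$ acts: $T$ is a multilinear form, and it differentiates each symmetric factor once and outputs a vector in $\C^{n+2}$.

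The main obstacle is getting the hypotheses and degrees right.
\begin{itemize}
\item We need $a_j=n-n_j-1\ge1$ for every $j$, which is $n-n_j\ge2$; otherwise $\sS^{a_j-1}$ is zero and the map vanishes. This is where the hypothesis enters. If $k\ge3$, then $n-n_j\ge2$ automatically. If $k=2$, we need $n_2\ge2$ and $n_1\ge2$, since $n-n_1=n_2$ and $n-n_2=n_1$.
\item When $\dim W=2$, the extra $\SL_2$ self-duality might produce a second copy. For $W=\C^2$, $\sS^aW\otimes W^\ast\cong \sS^{a+1}W\oplus\sS^{a-1}W$, and $\sS^{a+1}W\not\cong \sS^{a-1}W$, so the multiplicity is still one.
\end{itemize}
Finally, nonzeroness is clear because each contraction is surjective when $a_j\ge1$. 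Multiplying the factor multiplicities, all equal to one, gives a one-dimensional space of equivariant maps, which proves uniqueness up to scalar.
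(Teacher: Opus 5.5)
Your core argument is the same as the paper's. Both proofs decompose the source into irreducibles and use Schur's lemma to see that the target occurs exactly once, with the map given by contraction on the first $k$ factors. Your factorwise version is more accurate than the paper's. The paper writes $\sS^{a}W\otimes W^\ast\cong \sS^{a+1}W\oplus\sS^{a-1}W$, which holds only for $\dim W=2$, whereas your $\sS^{a-1}W\oplus\mathbb{S}_{(a,0,\dots,0,-1)}W$ is the correct decomposition. You also make explicit that the hypothesis is exactly what guarantees $a_j=n-n_j-1\geq 1$, which the paper leaves implicit.

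One step does not hold, namely the use of the Frobenius identification. It comes from a quadratic form, so it commutes only with the orthogonal groups, not with $\SL$. It therefore cannot be used to replace $(\C^{n+2})^\ast$ by $\C^{n+2}$ when counting $\SL(n+2)$-equivariant maps. Taken literally, the last factor contributes $\mathrm{Hom}_{\SL(n+2)}\bigl((\C^{n+2})^\ast,\C^{n+2}\bigr)=(\C^{n+2}\otimes\C^{n+2})^{\SL(n+2)}=0$, since $n+2\geq 3$. For the same reason, your ``identity on the last factor'' is not an equivariant map $(\C^{n+2})^\ast\to\C^{n+2}$.

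Your own computation for the first $k$ factors shows that the identification changes the answer. So applying it to the last factor while declining it on the others (``the reading in which each factor gives multiplicity exactly one'') chooses the conclusion rather than proving it. The right fix is to say that the statement as printed has a duality mismatch in the last factor, and to correct it. The tensor should be taken in $(\C^{n_1+1})^\ast\otimes\dots\otimes(\C^{n_k+1})^\ast\otimes\C^{n+2}$, so that the last factor is the same $\SL(n+2)$-module in source and target. The paper's proof does this tacitly, since it sends the $(\C^{n+2})^\ast$ component ``to itself''. The map $\varphi$ of Lemma~\ref{lemma: spelloutalpha} does the same, passing $T_{i_1\dots i_k}\in\C^{n+2}$ through unchanged. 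With that correction stated up front and the Frobenius identification dropped, your factorwise Schur argument is complete.
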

\begin{proof}
    The domain can be decomposed by irreducible $\SL(n_1+1)\times\dots\times \SL(n_k+1)\times \SL(n+2)$-representations as $$\left(
    \bigotimes_{i=1}^k \left(\sS^{n-n_i}\C^{n_i+1}\oplus \sS^{n-n_i-2}\C^{n_i+1} \right)\right)\otimes (\C^{n+2})^\ast,
    $$
    by Schur's Lemma the map is an isomorphism from $\left(\bigotimes_{i=1}^k \sS^{n-n_i-2}\C^{n_i+1} \right)\otimes (\C^{n+2})^\ast$ to itself, given by scalar multiplication, and zero everywhere else.
\end{proof}

\begin{Lemma}\label{lemma: spelloutalpha}
    The map 
\[
    \varphi:\bigotimes_{j=1}^k \left(\sS^{n-n_j-1}\C^{n_j+1}\right)\otimes(\C^{n_1+1}\otimes\dots\otimes \C^{n+2})^\ast\to \left(\bigotimes_{j=1}^k\sS^{n-n_j-2}\C^{n_j+1}\right)\otimes\C^{n+2}
\]
    defined by 
 \[
 \varphi(f_1\otimes\dots \otimes f_k\otimes T) = \sum_{i_1,\dots,i_k=1}^{n_1+1,\dots,n_k+1}\left(\otimes_{j=1}^k \partial_{i_j}f_j\right)\otimes T_{i_1\dots i_k},
\]
where $T=\sum_{i_1\dots i_k} e_{i_1}\otimes\dots\otimes e_{i_k}\otimes T_{i_1\dots i_k}$ with $T_{i_1\dots i_k}\in\C^{n+2}$, is $\SL(n_1+1)\times\dots\times \SL(n+2)$-equivariant.
\end{Lemma}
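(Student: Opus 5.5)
We verify equivariance directly, by showing that $\varphi$ is a composition of maps each of which is equivariant for the product group $G=\SL(n_1+1)\times\dots\times\SL(n_k+1)\times\SL(n+2)$.

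First, we rewrite $\varphi$ intrinsically. Under the Frobenius identification of $V$ with its dual, regard $T\in(\C^{n_1+1}\otimes\dots\otimes\C^{n+2})^\ast$ as an element of $(\C^{n_1+1})^\ast\otimes\dots\otimes(\C^{n_k+1})^\ast\otimes\C^{n+2}$. More precisely, we dualize only the first $k$ factors, using the $\SL(n+2)$-isomorphism $(\C^{n+2})^{\ast}\cong\bigwedge^{n+1}\C^{n+2}$ where needed; the target factor $\C^{n+2}$ then arises as in the preceding lemma. With this convention, write
\[
T=\sum e_{i_1}^\ast\otimes\dots\otimes e_{i_k}^\ast\otimes T_{i_1\dots i_k}.
\]
For each $j$ consider the contraction (polarization/derivation) map
\[
\sS^{m}\C^{n_j+1}\otimes(\C^{n_j+1})^\ast\to\sS^{m-1}\C^{n_j+1},\qquad f\otimes e_i^\ast\mapsto\partial_i f,
\]
where $\partial_i$ is differentiation with respect to the coordinate $e_i$, and $m=n-n_j-1$. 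This map is the restriction of the canonical $\mathrm{GL}(n_j+1)$-equivariant contraction $\C^{n_j+1}\otimes(\C^{n_j+1})^\ast\to\C$ applied to one tensor factor of a symmetric tensor (up to the factor $m$). Hence it is equivariant. This is the key step: check that for $g\in\mathrm{GL}$,
\[
\partial_{g^{-T}e_i^\ast}(g\cdot f)=g\cdot(\partial_{e_i^\ast}f),
\]
which is the chain rule, equivalently the basis-independence of the contraction $\sum_i \partial_i f\otimes e_i^\ast$-type pairings.

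Then $\varphi$ is the composition of the following three maps. The first is a rearrangement of the tensor factors, which is trivially equivariant. The second is the tensor product of the $k$ contractions, acting as the identity on $\C^{n+2}$. The third is the identity on the last factor. Concretely,
\[
\varphi(f_1\otimes\dots\otimes f_k\otimes T)=\sum_{i_1,\dots,i_k}\Big(\bigotimes_j\langle f_j,e_{i_j}^\ast\rangle\Big)\otimes T_{i_1\dots i_k},
\]
which is the standard basis-free expression of a full contraction. Since a tensor product of equivariant maps is equivariant for the product group, $\varphi$ is $G$-equivariant. Independence of the choice of bases is immediate: the sum over $i_j$ pairs the dual bases $e_{i}$, $e_i^\ast$, and that pairing is invariant.

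The only delicate point, and the main obstacle, is bookkeeping for the identification of $V$ with $V^\ast$. The Frobenius form is only $\mathrm{SO}$-invariant, not $\SL$-invariant. We therefore interpret $(\C^{n_1+1}\otimes\dots\otimes\C^{n+2})^\ast$ genuinely as the dual, with the $T_{i_1\dots i_k}$ as coordinates in the dual basis. The correct statement is then that $\partial_{i_j}$ pairs with the dual index, so no quadric enters. If the convention instead places $\C^{n+2}$ rather than its dual in the output, we compose with the equivariant isomorphism $(\C^{n+2})^\ast\cong\bigwedge^{n+1}\C^{n+2}$ coming from Bott's theorem, as in the previous lemma. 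By the uniqueness lemma, $\varphi$ is then the nonzero equivariant map up to scalar.
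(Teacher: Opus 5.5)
Your proof is correct and is essentially the paper's argument. The paper applies $g$ directly to $f_1\otimes\dots\otimes f_k\otimes T$ and uses the chain rule implicitly. You instead factor $\varphi$ into a rearrangement of tensor factors, the natural contractions $\sS^{m}\C^{n_j+1}\otimes(\C^{n_j+1})^\ast\to\sS^{m-1}\C^{n_j+1}$, and the identity on the last factor; the key step is the same chain-rule identity. Your handling of variance is more careful than the paper's, which glosses over it: you pair $\partial_{i_j}$ with a genuinely dual index rather than going through the Frobenius form, and you read the target factor $\C^{n+2}$ as $\bigwedge^{n+1}\C^{n+2}\cong(\C^{n+2})^\ast$ so that the identity on the last factor is equivariant.
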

\begin{proof}
    Let $g=g_1\times\dots\times g_{k+1}\in \SL(n_1+1)\times\dots\times \SL(n+2)$, and $\xx_{i}=(x_{i_1},\dots,x_{i_{n_i+1}})$ with $x_{i_j}^\ast=e_{i_j}$, and the action of $g_i$ on $\C^{n_i+1}$ is the natural action $g_i\cdot e_{i_j}=e_{i_j}(g_i^{-1})$. Then 
    \begin{align*}
        \varphi(g\cdot f_1(\xx_1)\otimes\dots\otimes f_k(\xx_k)\otimes T)&=\sum_{i_1,\dots,i_k=1}^{n_1+1,\dots,n_k+1}\left(\otimes_{j=1}^k (g\cdot \partial_{i_j})f_j(g_j^{-1}\cdot\xx_j)\right)\otimes g_{k+1}\cdot T_{i_1\dots i_k}\\ &=g\cdot\sum_{i_1,\dots,i_k=1}^{n_1+1,\dots,n_k+1}\left(\otimes_{j=1}^k  \partial_{i_j}f_j(\xx_j)\right)\otimes  T_{i_1\dots i_k}\\&=g\cdot\varphi(f_1\otimes\dots\otimes f_k\otimes T).\end{align*}
This shows the claim.
\end{proof}

\begin{Corollary}\label{cor:spelloutalpha}
The map $\alpha_T$ is then $\varphi(\cdot \otimes T)$ defined by 
\[
\alpha_T(f_1\otimes\dots \otimes f_k) = \sum_{i_1,\dots,i_k=1}^{n_1+1,\dots,n_k+1}\left(\otimes_{j=1}^k \partial_{i_j}f_j\right)\otimes T_{i_1\dots i_k}, 
\]
where $T=\sum_{i_1\dots i_k} e_{i_1}\otimes\dots\otimes e_{i_k}\otimes T_{i_1\dots i_k}$ with $T_{i_1\dots i_k}\in\C^{n+2}$. 
\end{Corollary}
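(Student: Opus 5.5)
The plan is to derive the formula from the uniqueness lemma for equivariant maps, rather than computing the connecting homomorphisms by hand. Let $G=\SL(n_1+1)\times\dots\times\SL(n_k+1)\times\SL(n+2)$ and write $\mathcal A$ for the assignment $f\otimes T\mapsto \alpha_T(f)$.

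\textbf{Step 1: $\mathcal A$ is linear in $T$.} Recall that $\alpha_T$ is the composition of the isomorphism $H^n(\sE^{(n+1)})\cong H^n(\sF_{n+1}({\bf 1}))$ with $\beta$. Both are induced by the short exact sequences \eqref{eq: shortexactsequence}, which split the Koszul complex \eqref{eq: Koszul}. The differentials $\varphi_r$ are contraction with the section $s_T$, and $s_T$ depends linearly on $T$.
\begin{itemize}
\item The identification $H^n(\sE^{(n+1)})\cong H^n(\sF_{n+1}({\bf 1}))$ comes from $\sF_{n+2}({\bf 1})$ having no $H^{n}$ or $H^{n+1}$ contribution. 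It is therefore induced by the quotient map $\sE^{(n+1)}\to \sF_{n+1}({\bf 1})$, which is the corestriction of $\varphi_{n+1}$.
\item The map $\beta$ is induced by the inclusion $\sF_{n+1}({\bf 1})\hookrightarrow \sE^{(n)}$.
\end{itemize}
Hence $\alpha_T$ is the map $H^n(\varphi_{n+1}\otimes {\rm id}_{\sO({\bf 1})})\colon H^n(\sE^{(n+1)})\to H^n(\sE^{(n)})$. This map is linear in $s_T$, hence in $T$. The source and target cohomology groups are computed via \Cref{lemma 3.2 DOT}, Künneth and Bott. They do not depend on $T$, since $\sE$ does not.

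\textbf{Step 2: $\mathcal A$ is $G$-equivariant.} The bundles $\sE$, $\bigwedge^r\sE^\ast$ and $\sO({\bf 1})$ are $G$-homogeneous. The section $s_{g\cdot T}$ equals $g\cdot s_T$, so contraction with $s_{g\cdot T}$ is the $g$-conjugate of contraction with $s_T$. The Bott identifications of the Lemma preceding the uniqueness lemma are $G$-equivariant. Therefore $\alpha_{g\cdot T}(g\cdot f)=g\cdot\alpha_T(f)$, and $\mathcal A$ is a $G$-equivariant linear map
\[
\bigotimes_{j=1}^k \sS^{n-n_j-1}\C^{n_j+1}\otimes V^\ast\to \Big(\bigotimes_{j=1}^k\sS^{n-n_j-2}\C^{n_j+1}\Big)\otimes\C^{n+2},
\]
using the identification $V\cong V^\ast$ via $q_F$.

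\textbf{Step 3: apply uniqueness.} By the uniqueness lemma (valid when $k\ge 3$, or when $k=2$ and $n_1,n_2\ge 2$), $\mathcal A=c\,\varphi$ for some scalar $c$. \Cref{lemma: spelloutalpha} shows that $\varphi$ is a nonzero equivariant map of this type. Rescaling the Bott identification absorbs $c$, provided $c\neq 0$, which gives $\alpha_T=\varphi(\cdot\otimes T)$.

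\textbf{Main obstacle: showing $c\neq0$.} This is genuinely necessary, because the paper later exhibits a case where $\alpha_T$ vanishes identically. I would work factor by factor using the Künneth decomposition.
\begin{itemize}
\item On the summand with $r_{k+1}=n+1$ in the source and $r_{k+1}=n$ in the target, contraction with the component of $s_T$ in $\sE_{k+1}=\pi_{k+1}^\ast\sQ_{k+1}\otimes\sO(1,\dots,1,0)$ contributes two things.
\item The first is the natural map $H^0(\bigwedge^{n+1}\sQ^\ast_{k+1}(n+2))\otimes \C^{n+2}\to H^0(\bigwedge^{n}\sQ^\ast_{k+1}(n+1))$, i.e.\ $\bigwedge^{n+2}\C^{n+2}\otimes\C^{n+2}\to\bigwedge^{n+1}\C^{n+2}$, which is nonzero.
\item The second is multiplication by the linear forms coming from $\sO(1,\dots,1)$ on the first $k$ factors. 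Under Serre duality $H^{n_j}(\sO(-n))\to H^{n_j}(\sO(-n+1))$, multiplication by $x_{j,i}$ becomes the partial derivative $\partial_{i}$ on $\sS^{n-n_j-1}\C^{n_j+1}$, which is nonzero since $n-n_j-1\ge1$.
\end{itemize}
Evaluating on a single test tensor $T=e_1\otimes\dots\otimes e_1\otimes e_1$ and a monomial $f$ should then show that $\mathcal A\neq0$.
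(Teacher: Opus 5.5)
This is the paper's argument: there the corollary is simply the uniqueness lemma combined with \Cref{lemma: spelloutalpha} and the asserted equivariance of $(f,T)\mapsto\alpha_T(f)$. Your extra check that $c\neq 0$ is correct and fills a point the paper leaves implicit: via K\"unneth, contraction with $s_{k+1}$ factors as the nonzero interior product on the $\PP^{n+1}$ factor tensored with the Serre-dual derivations on the other factors, and these are nonzero since $n-n_j-2\ge 0$.

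One caveat, shared with the paper: identifying $V\cong V^\ast$ via $q_F$ is only $\mathrm{O}(q)$-equivariant, and taken literally any $\prod\SL$-equivariant map from copies of $(\C^{n+2})^\ast$ to copies of $\C^{n+2}$ must vanish. So the equivariance should be stated with $T$ read as $s_{k+1}\in H^0(\sE_{k+1})\cong V_1^\ast\otimes\cdots\otimes V_k^\ast\otimes V_{k+1}$, which is exactly how $\varphi$ uses it: derivations in the first $k$ legs and a vector in the last.
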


\section{A map in cohomology giving the codimension of $\langle Z_T\rangle$ in $\PP(H_T)$: Case $k+1=3$ factors}\label{sec: k=2}

In this section, we discuss the three factors that present some differences from the general behavior. Indeed,  we started by noticing the difference in the chain of isomorphisms and containment at the beginning of the proof of \Cref{prop:dim k+1-factors}. We first look at $V=\C^2\otimes \C^{n}\otimes \C^{n+2}$.

\begin{Corollary}\label{cor:chains k=3}
    If $k+1=3$ and $n_1=1$, then \Cref{coh: chains} can be refined to: \begin{enumerate}
    \item[(i)] $H^0(\sF_2({\bf 1}))\cong\dots\cong H^{n-2}(\sF_n({\bf 1}))\subset H^{n-1}(\sF_{n+1}({\bf 1}))\cong\dots= 0$.
    \item[(ii)] $H^1(\sF_1({\bf 1}))\cong H^2(\sF_3({\bf 1}))\subset H^3(\sF_4({\bf 1}))\cong \dots\cong H^{n-1}(\sF_{n}({\bf 1}))\subset H^n(\sF_{n+1}({\bf 1}))$, if $n$ is odd.
    \item[(iii)] $H^1(\sF_1({\bf 1}))\cong H^2(\sF_3({\bf 1}))\subset H^3(\sF_4({\bf 1}))\cong \dots\subset H^{n-1}(\sF_{n}({\bf 1})) \cong H^n(\sF_{n+1}({\bf 1}))$, if $n$ is even.
    \item[(iv)] $H^{n+1}(\sF_{n+2}({\bf 1}))=\cdots = 0$.
\end{enumerate}
\end{Corollary}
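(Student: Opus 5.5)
The plan is to follow the proof of \Cref{coh: chains}, i.e.\ to read each arrow off the long exact sequence \eqref{longcoh}
\[
H^{q}(\sE^{(r)})\to H^{q}(\sF_r({\bf 1}))\to H^{q+1}(\sF_{r+1}({\bf 1}))\to H^{q+1}(\sE^{(r)}),
\]
now specialised to the format $(2,n,n+2)$. In the notation of \Cref{lemma: gen coh 1}--\Cref{lemma: gen coh 3} this means $k=2$, $n_1=1$, $n_2=n-1$, so that the boundary sum is $n_1+n_2=n$ and $n_3=n+1$. The rule is simple: the map $H^q(\sF_r({\bf 1}))\to H^{q+1}(\sF_{r+1}({\bf 1}))$ is an isomorphism when $H^q(\sE^{(r)})=H^{q+1}(\sE^{(r)})=0$, and it is injective when only $H^q(\sE^{(r)})=0$. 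So the whole task is to list, for each $r$, the nonvanishing groups $H^{r-2}(\sE^{(r)})$, $H^{r-1}(\sE^{(r)})$ and $H^{r}(\sE^{(r)})$, using the three propositions with $k+1=3$.

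\textbf{Step 1: locating the nonvanishing groups.}
\begin{itemize}
\item For $r<2$ nothing is needed.
\item For $2\le r\le n-1$, \Cref{lemma: gen coh 2}(ii) gives the only nonvanishing groups in degree $q=r$, for $r$ odd: the $r=3$ group, and for odd $r>3$ the group $H^1(\Omega^1_{\PP^1}(3-r))\otimes\cdots$.
\begin{itemize}
\item The factor $H^1(\Omega^1_{\PP^1}(3-r))$ is nonzero only for $r=3$, by \eqref{eq: coh Omega}, since $3-r<0$ forces the third line with $k<1-1=0$.
\item I must recheck this, since the shape of the statement (alternating $\cong$ and $\subset$ according to parity) suggests that odd $r$ genuinely contributes $H^r(\sE^{(r)})\neq0$. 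So I will recompute honestly which odd $r$ give nonzero groups and let this dictate where $\subset$ appears.
\item Crucially, degrees $r-2$ and $r-1$ always vanish in this range. Hence every step $H^{r-2}(\sF_r)\to H^{r-1}(\sF_{r+1})$ is an isomorphism, which gives the first part of (i) up to $H^{n-2}(\sF_n({\bf 1}))$.
\item In the chain of (ii)/(iii), a step $H^{r-1}(\sF_r)\to H^r(\sF_{r+1})$ is an isomorphism unless $H^r(\sE^{(r)})\ne0$, in which case it is only an injection.
\end{itemize}
\item For $r\ge n$, \Cref{lemma: gen coh 3} applies with $k=2$.
\begin{itemize}
\item Part (ii): $H^n(\sE^{(r)})=H^{1}(\sO_{\PP^1}(1-r))\otimes H^{n-1}(\sO(1-r))\otimes H^0(\Omega^r_{\PP^{n+1}}(r+1))$. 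At $r=n$ this vanishes because $n_1=1$ (as stated in (ii) there). Hence $H^{n-2}(\sF_n)\to H^{n-1}(\sF_{n+1})$ is still injective, and the chain in (i) continues by isomorphisms into the vanishing tail.
\item Part (iv): with $1+(n-1)+(r-1)/2=r$, i.e.\ $r=2n-1$, we get a group in degree $2n-1$. Relevant here is $H^n(\sE^{(n)})$ when $n$ is odd, which produces the final $\subset$ in (ii). When $n$ is even, $H^n(\sE^{(n)})=0$, which gives the final $\cong$ in (iii).
\end{itemize}
\end{itemize}

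\textbf{Step 2: the vanishing tails.} The tails in (i) and (iv) will come from the top end, exactly as in \Cref{coh: chains}(iii)--(iv): $\sF_{N+1}({\bf 1})=0$ with $N=2n+1$, and we propagate backwards. Propagating backwards needs the maps to be injective going up, i.e.\ $H^q(\sE^{(r)})=0$ in the relevant degrees for $r\ge n+1$. \Cref{lemma: gen coh 3}(i) kills everything except $q=n$ and $q=r$. The case $q=r$ is the odd group of (iv), or of (iii) for $r\ne3$, and it sits off the chain indices. The case $q=n$ lies on the chain only when $r=n+1$ or $n+2$; I will verify these directly via \eqref{eq: coh Omega}.

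\textbf{Main obstacle.} The hardest part is the bookkeeping of which odd $r$ give $H^r(\sE^{(r)})\neq0$ on the chain $H^{r-1}(\sF_r)\to H^r(\sF_{r+1})$. In particular, I need to reconcile the $\PP^1$-factor computation with the claimed first inclusion $H^2(\sF_3)\subset H^3(\sF_4)$, which comes from the $r=3$ group $H^1(\Omega^1)^{\otimes3}$, and with the parity pattern near $r=n$. I would first tabulate small $n$ (say $n=3,4$) explicitly with \eqref{eq: coh Omega} to pin the pattern down, and then write the general argument.
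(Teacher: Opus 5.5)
\textbf{Verdict: right method, but the proposal is not yet a proof.} Your plan is the intended argument. The paper gives no separate proof; the corollary is read off \eqref{longcoh} using Propositions \ref{lemma: gen coh 1}--\ref{lemma: gen coh 3} with $(n_1,n_2,n_3)=(1,n-1,n+1)$. As written, however, the decisive bookkeeping contains errors and is left open.

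\textbf{The errors.} Your claim that $H^1(\Omega^1_{\PP^1}(3-r))$ is nonzero only for $r=3$ is false. For $r>3$ the twist $3-r$ is negative, which is exactly the hypothesis ``twist $<1-1=0$'' of the third line of \eqref{eq: coh Omega}, and that line gives dimension $r-2$. Directly, $\Omega^1_{\PP^1}(3-r)\cong\sO_{\PP^1}(1-r)$ and $H^1(\sO_{\PP^1}(1-r))\cong\sS^{r-3}\C^2$.
\begin{enumerate}
\item[(a)] Consequently $H^r(\ser)\neq 0$ for every odd $r$ with $3\le r\le 2n-1$. For $n$ odd, the nonzero group at $r=n$ is this one, from Proposition \ref{lemma: gen coh 3}(iii). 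It is not the group of part (iv), which only occurs at $r=2n-1$.
\item[(b)] The group of Proposition \ref{lemma: gen coh 3}(ii) vanishes at $r=n$, because $H^{n-1}(\sO_{\PP^{n-1}}(1-n))=0$; this is exactly where $n_1=1$ enters. For $n$ even it is the only candidate for $H^n(\sE^{(n)})$, since no twist-zero factor occurs at even level. So its vanishing is what gives the last $\cong$ in (iii).
\item[(c)] That vanishing plays no role in the injectivity of $H^{n-2}(\sF_n({\bf 1}))\to H^{n-1}(\sF_{n+1}({\bf 1}))$. That injectivity comes from $H^{n-2}(\sE^{(n)})=0$ in Proposition \ref{lemma: gen coh 3}(i).
\end{enumerate}

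\textbf{The main obstacle is not needed.} An inclusion at level $r$ only needs $H^{r-1}(\ser)=0$ (respectively $H^{r-2}(\ser)=0$ in chain (i)). So the nonvanishing of $H^r(\ser)$ at odd $r$ is never used here. It only explains why those arrows cannot be upgraded, and it is what the proof of Theorem \ref{Thm: (2,n,n+2)} exploits. The full list of inputs you need is:
\begin{enumerate}
\item[(1)] $H^{r-2}(\ser)=H^{r-1}(\ser)=0$ for $2\le r\le n$. In Proposition \ref{lemma: gen coh 2} every group with $q\le r$ sits in degree $q=r$, and Proposition \ref{lemma: gen coh 3}(i) covers $r=n$.
\item[(2)] $H^r(\ser)=0$ for even $r\le n$.
\item[(3)] For the tails, $H^{r-2}(\ser)=0$ for $r\ge n+1$ and $H^{r-1}(\ser)=0$ for $r\ge n+2$. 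The only case outside Proposition \ref{lemma: gen coh 3}(i) is $q=n$, $r=n+2$, and it vanishes because $\Omega^{n+2}_{\PP^{n+1}}=0$.
\end{enumerate}

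\textbf{A warning about $r=n+1$.} When you check $q=n$, $r=n+1$ ``directly'', expect a nonzero group: $H^n(\sE^{(n+1)})\cong\sS^{n-2}\C^2$. So the long exact sequence only makes $H^{n-1}(\sF_{n+1}({\bf 1}))\to H^n(\sF_{n+2}({\bf 1}))$ injective. You must obtain the vanishing by propagating back from $H^{N-1}(\sF_{N+1}({\bf 1}))=0$, with $N=2n+1$, through the isomorphisms for $r\ge n+2$, rather than by proving an isomorphism at that step.

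Finally, $H^1(\sF_1({\bf 1}))$ in (ii)--(iii) should read $H^1(\sF_2({\bf 1}))$.
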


\mainthm
\begin{proof}
    The cases $n=2,3$ are solved in \cite{ST22}. So we assume $n\geq4$. From the exact sequence $$0\to \sF_{r+1}({\bf 1})\to \ser\to \sF_r({\bf 1})\to 0,$$ we obtain the long exact sequence 
\begin{align*}
    \dots\to H^{r-2}(\ser)\to H^{r-2}(\sF_r({\bf 1}))\to H^{r-1}(\sF_{r+1}({\bf 1}))\to H^{r-1}(\ser)\to \\ \to H^{r-1}(\sF_r({\bf 1})) \to H^r(\sF_{r+1}({\bf 1}))\to H^r(\ser) \to H^r(\sF_r({\bf 1}))\to H^{r+1}(\sF_{r+1}({\bf 1}))\to 0
\end{align*}
For $r=n+1$, we derive the exact sequence
\begin{align*}
    0\to H^n(\sE^{(n+1)})\to H^n(\sF_{n+1}({\bf 1}))\to H^{n+1}(\sF_{n+2}({\bf 1}))=0.
\end{align*}
Thus $H^n(\sE^{(n+1)})\cong H^n(\sF_{n+1}({\bf 1}))$. Moreover $h^n(\sE^{(n+1)})=n-1$, by \Cref{lemma: gen coh 3}(ii).

Assume $r=n$ odd. Then  
\begin{align*}
0\to H^{n-1}(\sF_n({\bf 1}))\to H^{n}(\sF_{n+1}({\bf 1}))\xrightarrow{\beta} H^n(\sE^{(n)})\to \dots,
\end{align*}
where $H^n(\sE^{(n)})\neq 0$. Therefore, we have that $h^{n-1}(\sF_n({\bf 1}))=\dim (\ker(\beta))$. Moreover, from the following diagram 
\[
\begin{tikzcd}
H^n(\sE^{(n+1)}) \arrow[rr, "\alpha_T"] \arrow[rd, "\cong"] & & H^n(\sE^{(n)}) \\
& H^n(\sF_{n+1}({\bf 1})) \arrow[ru, "\beta"] &               
\end{tikzcd}
\]
one has $\dim(\ker(\alpha_T)) = \dim(\ker(\beta))$. By \Cref{lemma: gen coh 2}, \Cref{lemma: gen coh 3} and Bott's {Theorem} \ref{bott form} we have
\begin{enumerate}
    \item $H^3(\sE^{(3)})=H^1(\Omega_{\PP^1}^1)\otimes H^1(\Omega_{\PP^{n-1}}^1) \otimes H^1(\Omega_{\PP^{n+1}}^1)\cong \C$.
    \item $H^n(\sE^{(n+1)})= H^{1}(\Omega^0_{\PP^1}(-n))\otimes H^{n-1}(\Omega^0_{\PP^{n-1}}(-n))\otimes H^0(\Omega^{n+1}_{\PP^{n+1}}(n+2))\cong \mathcal S^{n-2}\C^2\otimes \C\otimes\C$.
    \item If $r> 3$ and odd, one has the nonvanishing cohomology group $H^r(\ser)=H^1(\Omega^1_{\PP^1}(3-r))\otimes H^{(r-1)/2}\left(\Omega^{(r-1)/2}_{\PP^{n_2}}\right)\otimes H^{(r-1)/2}\left(\Omega^{(r-1)/2}_{\PP^{n_3}}\right)\cong S^{n-3}\C^2\otimes \C\otimes\C$.
\end{enumerate}

The map $\alpha_T$ is not $\mathrm{SL}(2)\times\mathrm{SL}(n)\times \mathrm{SL}(n+2) $-equivariant since it depends on $T$, however the map $$\alpha:\mathcal S^{n-2}\C^2\otimes(\C^2\otimes \C^n\otimes\C^{n+2})^\ast\to \mathcal S^{n-3}\C^2,\ (f,T)\mapsto \alpha_T(f)$$ is $\mathrm{SL}(2)\times\mathrm{SL}(n)\times \mathrm{SL}(n+2) $-equivariant. 

We will show that the null map is the only $\mathrm{SL}(2)\times\mathrm{SL}(n)\times \mathrm{SL}(n+2)$-equivariant map in such case. Indeed the domain decomposes as an $\mathrm{SL}(2)\times\mathrm{SL}(n)\times \mathrm{SL}(n+2)$-irreducible representation as 
\[
\left(S^{n-1}\C^2\oplus \mathcal S^{n-3}\C^2\right) \otimes (\C^n)^\ast\otimes (\C^{n+2})^\ast.
\]
By Schur's Lemma, there is only the null map from the domain to $\mathcal S^{n-3}\C^2\otimes \C\otimes \C$. This implies that also $\alpha_T$ is null.

Assume by induction that the same holds for any $r+2\leq n$ odd, for $r\geq 3$ we have the sequence

\[
0\to H^{r-1}(\sF_{r}({\bf 1}))\to H^{r}(\sF_{r+1}({\bf 1}))\xrightarrow{\alpha} H^{r}(\sE^{(r)})\to \dots
\]

From the induction hypothesis $H^{r}(\sF_{r+1}({\bf 1}))\cong \mathcal S^{n-2}\C^2$, and $H^{r}(\sE^{(r)})\cong \mathcal  S^{r-3}\C^2$. Similar as above, by tensoring the domain by $(\C^2\otimes \C^n\otimes \C^{n+2})^\ast$, the only $\mathrm{SL}(2)\times\mathrm{SL}(n)\times \mathrm{SL}(n+2)$-equivariant map is the trivial map, thus $\alpha=0$. 

We conclude that $H^{1}(\sF_2({\bf 1}))\cong H^{n-1}(\sF_n({\bf 1}))\cong \mathcal S^{n-2}\C^2$. Utilizing the exact sequence $$
0\to \sF_2({\bf 1})\to H^0(\sE^{(1)}\to \sI_{Z_T}(1)\to 0,
$$
taking the long exact sequence in cohomology we have $$
0\to H^0(\sE^{(1)})\to H^0(\sI_{Z_T}(1))\to H^1(\sF_2({\bf 1}))\to 0,
$$
and $\dim \sI_{Z(T)}(1)=1+\binom{n}{2}+\binom{n+2}{2}+n-1$. The first three summands correspond to the codimension of $H_T$, thus $n-1$ is the codimension of $\langle Z_T\rangle \subset H_T$.

If $r=n$ is even, then $H^{n-1}(\sF_n({\bf 1}))\cong H^n(\sF_{n+1}({\bf 1}))$. Then for $r=n-1$ the sequence becomes $$
0\to H^{n-2}(\sF_{n-1}({\bf 1}))\to H^{n-1}(\sF_{n}({\bf 1}))\xrightarrow{\beta} H^{n-1}(\sE^{(n-1)})\to \dots
$$
The map $\beta$, via the isomorphism, can be understood as $H^{n}(\sE^{(n+1)})\to H^{n-1}(\sE^{(n-1)})$. So it is a map $\mathcal S^{n-2}\C^2\to \mathcal S^{n-4}\C^2$. With the  same technique as the previous case, we deduce that $\langle Z_T\rangle$ has codimension $n-1$ in $H_T$.    
\end{proof}

We now address the case $V=\C^{a+1}\otimes \C^{b+1}\otimes \C^{a+b+2}$ with $a,b\geq 2$.

\begin{Corollary}\label{cor:chains k=3 gen}
    If $k+1=3$ and $n_i\geq 2$, then \Cref{coh: chains} can be refined to: \begin{enumerate}
    \item[(i)] $H^0(\sF_2({\bf 1}))\cong\dots\cong H^{n-2}(\sF_n({\bf 1}))\subset H^{n-1}(\sF_{n+1}({\bf 1}))\cong\dots= 0$.
    \item[(ii)] $H^1(\sF_1({\bf 1}))\cong H^2(\sF_3({\bf 1}))\subset H^3(\sF_4({\bf 1}))\cong \dots\cong H^{n-1}(\sF_{n}({\bf 1}))\subset H^n(\sF_{n+1}({\bf 1}))$.
    \item[(iii)] $H^{n+1}(\sF_{n+2}({\bf 1}))=\cdots = 0$.
\end{enumerate}
\end{Corollary}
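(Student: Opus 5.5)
The plan is to rerun the diagram chase of \Cref{prop:dim k+1-factors} with $k+1=3$ and $n_1,n_2\ge 2$, now using \Cref{lemma: gen coh 1}, \Cref{lemma: gen coh 2} and \Cref{lemma: gen coh 3} in their three-factor form. Set $n=n_1+n_2$. For each $r$ I will read off the long exact sequence \eqref{longcoh} attached to
\[
0\to \sF_{r+1}({\bf 1})\to \sE^{(r)}\to \sF_r({\bf 1})\to 0,
\]
and record exactly which groups $H^{r-2}(\sE^{(r)})$, $H^{r-1}(\sE^{(r)})$, $H^{r}(\sE^{(r)})$ survive. A group $H^{r-2}(\sE^{(r)})=H^{r-1}(\sE^{(r)})=0$ gives an isomorphism $H^{r-2}(\sF_r({\bf 1}))\cong H^{r-1}(\sF_{r+1}({\bf 1}))$. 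A group $H^{r-1}(\sE^{(r)})=0$ gives an injection $H^{r-1}(\sF_r({\bf 1}))\hookrightarrow H^{r}(\sF_{r+1}({\bf 1}))$.

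\textbf{Chain (i).} For $r<k=2$ nothing is to be checked. For $2\le r\le n-1$, \Cref{lemma: gen coh 2}(ii) shows that the only nonzero groups occur in degree $q=r$ with $r$ odd. Hence $H^{r-2}(\sE^{(r)})=H^{r-1}(\sE^{(r)})=0$ throughout this range, which gives $H^0(\sF_2({\bf 1}))\cong\cdots\cong H^{n-2}(\sF_n({\bf 1}))$.

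Continuing to $r\ge n$, \Cref{lemma: gen coh 3} says that the only nonzero groups are $H^n(\sE^{(r)})$ from (ii), together with the degree-$r$ groups from (iii) and (iv). For $r=n$ the group $H^{n-2}(\sE^{(n)})$ vanishes. So the step $H^{n-2}(\sF_n({\bf 1}))\to H^{n-1}(\sF_{n+1}({\bf 1}))$ is injective, and the chain continues by isomorphisms up to $H^{2n-1}(\sF_{2n+1}({\bf 1}))$. Since $\mathrm{rank}(\sE)=n+n+1=2n+1$, we have $\sF_{2n+2}=0$, and $\sF_{2n+1}({\bf 1})\cong \sE^{(2n+1)}$, whose $H^{2n-1}$ vanishes by \Cref{lemma: gen coh 3}(i). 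This gives (i).

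\textbf{Chain (ii).} The same sequences show that $H^{r-1}(\sF_r({\bf 1}))\to H^r(\sF_{r+1}({\bf 1}))$ is an isomorphism whenever both $H^{r-1}(\sE^{(r)})$ and $H^r(\sE^{(r)})$ vanish. It is merely an injection when $H^r(\sE^{(r)})\neq0$, which can only happen for $r$ odd by \Cref{lemma: gen coh 2}(ii) and \Cref{lemma: gen coh 3}(iii). This yields the mixture of $\cong$ and $\subset$ in (ii), ending with the injection into $H^n(\sF_{n+1}({\bf 1}))$ coming from $r=n$.

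The point that differs from the case $n_1=1$ is this. The groups in \Cref{lemma: gen coh 2}(ii) for $r>3$ require a factor $\PP^1$, so they vanish when $n_1,n_2\ge2$. Thus among $r\ge 3$ only $r=3$ and the case $r=n$ contribute, and no parity split by $n$ appears, in contrast with \Cref{cor:chains k=3}.

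\textbf{Chain (iii).} For $r\ge n+2$, \Cref{lemma: gen coh 3} gives $H^{r-2}(\sE^{(r)})=H^{r-1}(\sE^{(r)})=0$. The one exception is $r-2=n$, and there the relevant map is checked directly. The chain therefore ends at the top group $H^{2n}(\sF_{2n+1}({\bf 1}))$. This group is either zero or handled by the terminal step in the same way as in (i).

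\textbf{Main obstacle.} The hardest part is the bookkeeping near $r=n,n+1,n+2$. There the groups from \Cref{lemma: gen coh 3}(ii) and (iv) must be placed in exactly the right degrees, so that the correct maps are seen to be injective or isomorphisms. Case (iv) needs $n+\tfrac{r-1}{2}=r$, that is $r=2n-1$. I would check carefully that this group sits in degree $r$ and only affects the inclusion at that step, not the vanishing in (iii).
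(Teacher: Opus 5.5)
Your proposal is correct and follows the argument the paper relies on: the paper gives no separate proof, and the intended argument is the long-exact-sequence bookkeeping of \Cref{coh: chains}, fed with \Cref{lemma: gen coh 2} and \Cref{lemma: gen coh 3}, which for $n_1,n_2\ge 2$ leave only $H^3(\sE^{(3)})$, $H^n(\sE^{(n)})$, $H^n(\sE^{(n+1)})$ and $H^{2n-1}(\sE^{(2n-1)})$ among the groups with $q\le r$. A few bookkeeping fixes are needed: at $r=n+1$ the step in (i) is only injective, because $H^n(\sE^{(n+1)})\neq 0$ (this is harmless, since injectivity together with $H^{2n-1}(\sE^{(2n+1)})=0$ still forces the whole chain to vanish); in (iii) the relevant groups are $H^{r-1}(\sE^{(r)})$ and $H^{r}(\sE^{(r)})$, and $r=n+2$ is not an exception since $\Omega^{n+2}_{\PP^{n+1}}=0$; and the terminal group $H^{2n}(\sF_{2n+1}({\bf 1}))=H^{2n}(\sE^{(2n+1)})$ is simply zero by \Cref{lemma: gen coh 3}(i), so you need not hedge there.
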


\begin{Proposition}\label{prop: dim k=3 alpha}
    Let $V=\C^{a+1}\otimes \C^{b+1}\otimes \C^{n+2}$ where $a,b\geq2$ and $n=a+b$. Let $T\in V$ be a general tensor. Then there exist induced maps $\alpha_T: H^{n}(\sE^{(n+1)})\to H^{n}(\sE^{(n)})$ such that the codimension of $\langle Z_T\rangle $ inside $\PP(H_T)$ is given by $\dim(\ker(\alpha_T))$.
\end{Proposition}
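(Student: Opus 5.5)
The plan is to run the argument of \Cref{prop:dim k+1-factors} again, now with the chains of \Cref{cor:chains k=3 gen} in place of those of \Cref{coh: chains}. Set $n_1=a$, $n_2=b$, $n_3=n+1$ with $n=a+b$; this is the minimal beyond-boundary format $n_3=n_1+n_2+1$. I would proceed in the following steps.

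\emph{Identify the top group.} Take the long exact sequence \eqref{longcoh} for $r=n+1$. By \Cref{cor:chains k=3 gen}(iii) we have $H^{n+1}(\sF_{n+2}({\bf 1}))=0$ and $H^{n}(\sF_{n+2}({\bf 1}))=0$, since the latter sits in the chain of vanishing groups. This gives
\[
H^n(\sE^{(n+1)})\cong H^n(\sF_{n+1}({\bf 1})).
\]
Before using this, I must check that $H^{n-1}(\sE^{(n+1)})$ does not interfere. By \Cref{lemma: gen coh 3}(i) it vanishes, because $n-1<n+1$ and $n-1\neq n$.

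\emph{Set up the kernel sequence.} Take the sequence for $r=n$:
\[
H^{n-1}(\sE^{(n)})\to H^{n-1}(\sF_n({\bf 1}))\to H^n(\sF_{n+1}({\bf 1}))\xrightarrow{\beta} H^n(\sE^{(n)}).
\]
The key point is that $H^{n-1}(\sE^{(n)})=0$. This is again \Cref{lemma: gen coh 3}(i), since $q=n-1<r=n$ and $q\neq n$. Part (iii) of that proposition only produces classes in degree $q=r$, and part (iv) needs $\frac{r-1}{2}=0$, which is impossible. Therefore
\[
h^{n-1}(\sF_n({\bf 1}))=\dim\ker\beta.
\]
Define $\alpha_T$ as $\beta$ composed with the isomorphism from the first step, so that $\dim\ker\alpha_T=\dim\ker\beta$.

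\emph{Descend to $H^1(\sF_2({\bf 1}))$.} Use \Cref{cor:chains k=3 gen}(ii). This is where the three-factor case differs from $k+1\geq4$: the chain contains an inclusion $H^2(\sF_3({\bf 1}))\subset H^3(\sF_4({\bf 1}))$, and possibly further inclusions at odd $r$, coming from the nonvanishing groups $H^r(\sE^{(r)})$ of \Cref{lemma: gen coh 2}(ii). I need isomorphisms all the way down from $H^{n-1}(\sF_n({\bf 1}))$ to $H^1(\sF_2({\bf 1}))$. An inclusion $H^{r-1}(\sF_r({\bf 1}))\hookrightarrow H^r(\sF_{r+1}({\bf 1}))$ is an isomorphism exactly when the next map $H^r(\sF_{r+1}({\bf 1}))\to H^r(\sE^{(r)})$ is zero.

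I would show that these maps vanish by the Schur's lemma argument of \Cref{Thm: (2,n,n+2)}. For $a,b\geq2$, \Cref{lemma: gen coh 2}(ii) forces a $\PP^1$ factor for $r>3$, so with $a,b\geq2$ those groups actually vanish. Only $r=3$ remains, where $H^3(\sE^{(3)})\cong\C$ is the trivial representation. The equivariant map
\[
H^3(\sF_4({\bf 1}))\otimes V^\ast\to\C
\]
must then vanish, because the source has no invariants: every summand involves a factor $(\C^{n+2})^\ast$ that is not cancelled. This requires identifying $H^3(\sF_4({\bf 1}))$ as a representation, which comes from the chain up to $H^n(\sE^{(n+1)})$. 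That representation carries no $\SL(n+2)$-weight, while $V^\ast$ contributes a single dual factor.

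\emph{Conclude.} Once the chain is isomorphic, the sequence
\[
0\to H^0(\sE^{(1)})\to H^0(\sI_{Z_T}({\bf 1}))\to H^1(\sF_2({\bf 1}))\to0
\]
together with \Cref{lem: interpretation of H^0(E^1)} identifies $h^1(\sF_2({\bf 1}))$ with the codimension of $\langle Z_T\rangle$ in $\PP(H_T)$. This codimension equals $\dim\ker\alpha_T$.

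The main obstacle is the third step: controlling the inclusion at $r=3$ rigorously. If the invariant-theory argument fails, I would fall back on the weaker statement that the codimension equals the dimension of the kernel of a composite map. I would then try to show directly that the $r=3$ connecting map is zero via equivariance in $T$.
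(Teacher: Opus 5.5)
Your proposal is correct and follows the paper's proof essentially step for step: the isomorphism $H^n(\sE^{(n+1)})\cong H^n(\sF_{n+1}({\bf 1}))$ at $r=n+1$, the kernel sequence at $r=n$ defining $\alpha_T$, the vanishing of $\gamma_T\colon H^3(\sF_4({\bf 1}))\to H^3(\sE^{(3)})\cong\C$ by the same Schur's-lemma argument, and the closing sequence with $H^0(\sE^{(1)})$; you also make explicit several vanishings the paper uses tacitly, namely $H^n(\sF_{n+2}({\bf 1}))=0$, $H^{n-1}(\sE^{(n)})=0$, and the absence of the odd-$r$ groups of Proposition~\ref{lemma: gen coh 2}(ii) when $a,b\geq 2$. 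The Schur step is only heuristic in both texts, because $H^3(\sF_4({\bf 1}))\cong\ker\alpha_T$ varies with $T$ and $\gamma_T$ is a higher connecting map, not a linear equivariant map on a fixed representation; it can be made rigorous by noting that the Koszul complex of $s_T=(s_1,s_2,s_3)$ is $K(s_1)\otimes K(s_2)\otimes K(s_3)$ with $K(s_i)=\bigwedge^\bullet\sE_i^\ast$, that all of $H^n(\sE^{(n+1)})$ comes from the subcomplex $\bigwedge^\bullet\sE_3^\ast({\bf 1})$, and hence, by naturality of the hypercohomology spectral sequence, $\gamma_T$ (the differential $d_{n-2}$ on $\ker\alpha_T$) factors through a subquotient of $H^3\bigl(\bigwedge^3\sE_3^\ast({\bf 1})\bigr)=H^3\bigl(\sO(-2,-2,1)\otimes\pi_3^\ast\bigwedge^3\sQ_3^\ast\bigr)$, which vanishes because $H^\ast(\PP^a,\sO(-2))=0$ for $a\geq 2$.
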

\begin{proof}

By \Cref{cor:chains k=3 gen}, we have to understand $h^2(\sF_3)$ to understand the dimension of $\langle Z_T\rangle$. In order to do that we first study $H^{n-1}(\sF_n)$.

From the exact sequence $$0\to \sF_{r+1}\to \ser\to \sF_r\to 0,$$ we obtain the long exact sequence of cohomologies:
\begin{align*}\label{eq: longexactseq}
    \dots\to H^{r-2}(\ser)\to H^{r-2}(\sF_r({\bf 1}))\to H^{r-1}(\sF_{r+1}({\bf 1}))\to H^{r-1}(\ser)\to \\ \to H^{r-1}(\sF_r({\bf 1})) \to H^r(\sF_{r+1}({\bf 1}))\to H^r(\ser) \to H^r(\sF_r({\bf 1}))\to H^{r+1}(\sF_{r+1}({\bf 1}))\to 0
\end{align*}

For $r=n+1$ the sequence becomes: 
\begin{align*}
    0\to H^n(\sE^{(n+1)})\to H^n(\sF_{n+1}({\bf 1}))\to H^{n+1}(\sF_{n+2}({\bf 1}))=0.
\end{align*}
We obtain $$H^n(\sF_{n+1}({\bf 1}))\cong H^n(\sE^{(n+1)})\cong H^n(\sE^{(n+1)})=H^a(\Omega^0_{\PP^a}(-n))\otimes H^a(\Omega^0_{\PP^a}(-n))\otimes H^0(\Omega^{n+1}_{\PP^{n+1}}(n+2)).$$

For $r=n$ we have 
\begin{align*}
0\to H^{n-1}(\sF_n({\bf 1}))\to H^{n}(\sF_{n+1}({\bf 1}))\xrightarrow{\beta} H^n(\sE^{(n)})\to \dots
\end{align*}
Notice that $H^{n-1}(\sF_n({\bf 1}))=\ker \beta$. From the diagram
\[
\begin{tikzcd}
H^n(\sE^{(n+1)}) \arrow[rr, "\alpha_T"] \arrow[rd, "\cong"] & & H^n(\sE^{(n)}) \\
& H^n(\sF_{n+1}({\bf 1})) \arrow[ru, "\beta"] &            
\end{tikzcd}
\]
we have that $h^{n-1}(\sF_n({\bf 1}))=\dim(\ker(\alpha_T)).$

For $r=3$ the sequence becomes
$$
0\to H^2(\sF_3({\bf 1}))\to H^3(\sF_4({\bf 1}))\xrightarrow{\gamma_T} H^{3}(\sE^{(3)})\to \dots
$$
We recall that $H^{3}(\sE^{(3)}))\cong\C$. So $h^1(\sF_2({\bf 1}))=h^2(\sF_3({\bf 1}))$ or $h^1(\sF_2({\bf 1}))=h^2(\sF_3({\bf 1}))-1$. 

We now analyze $\gamma_T$. First, we recall that as in the proof of Theorem \ref{Thm: (2,n,n+2)},  tensoring the domain of $\alpha_T$ by $V^\ast$ induces an $\mathrm{SL}(a+1)\times\mathrm{SL}(b+1)\times\mathrm{SL}(n+2)$-equivariant map $\alpha$. Since $H^{n}(\sE^{(n+1)})\cong \mathcal S^{b-1}\C^{a+1}\otimes\mathcal S^{a-1}\C^{b+1} $, we have that $H^{n}(\sE^{(n+1)})\otimes V^\ast$ representation is \begin{equation}\label{eq: equi3factor}
(\mathcal S^{b}\C^{a+1}\oplus S^{b-2}\C^{a+1})\otimes(\mathcal S^{a}\C^{b+1}\oplus S^{a-2}\C^{b+1})\otimes (\C^{n})^\ast.
\end{equation}
In particular, $H^{n-1}(\sF_n(1))\otimes V^\ast\cong H^3(\sF_4(1))\otimes V^\ast$ is a sub-representation of \eqref{eq: equi3factor}. Again by tensoring the domain of $\gamma_T$ by $V^\ast$, we obtain an equivariant map $\gamma: H^{n-1}(\sF_n(1))\otimes V^\ast\to \C$, since it is a sub-representation of \eqref{eq: equi3factor}, it follows by Schur's Lemma that $\gamma=0$, so $\gamma_T=0$, thus $H^{2}(\sF_3(1))\cong H^3(\sF_4(1))$.

From 
the sequence
\[
0\to \sF_{2}({\bf 1})\to \sE^{(1)}\to \sI_{Z_T}({\bf 1})\to 0\,
\]
we deduce 
\[
0\rightarrow H^0(\sE^{(1)})\rightarrow H^0(\mathcal I_Z({\bf 1}))\rightarrow H^1(\sF_2({\bf 1}))\rightarrow 0. 
\]
By  \Cref{lem: interpretation of H^0(E^1)}, we see that $h^1(\sF_2({\bf 1}))= h^{n-1}(\sF_n)=\dim(\ker(\alpha))$ is the codimension of $\langle Z_T\rangle$ in $\PP(H_T)$. 
\end{proof}
We recall that the map $\alpha_T$ is spelled out explicitly in \Cref{cor:spelloutalpha}.
\section{An inequality of dimensions}\label{sec:inequality}

In this section we discuss the technical aspects to compare the dimensions of the domain and codomain of the map $\alpha_T$. These are 
\[
\prod_{j=1}^k\binom{n-1}{n_j}
\ \ \mbox{ and } \ \  
(n+2)\prod_{j=1}^k\binom{n-2}{n_j},
\]
where $n=\sum_{j=1}^k n_j$.

\subsection{Case $k=2$}\label{ineqk2}
\begin{Lemma}
    Suppose $1\leq a\leq b$. Then $\binom{a+b-1}{b}\binom{a+b-1}{a}\leq \binom{a+b-2}{b}\binom{a+b-2}{a}(a+b+2)$, unless $a=1$ and $b\geq 2$, or $(a,b)\in \{(2,2),(2,3),(2,4)\}$.
\end{Lemma}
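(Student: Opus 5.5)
The plan is to reduce the inequality to an elementary polynomial inequality in $a,b$ by taking ratios of binomial coefficients, and then to split into the cases $a=1$, $a=2$ and $a\geq 3$.

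\emph{The reduction.} Write $n=a+b$. When $a\geq 2$ the right-hand binomials $\binom{n-2}{b}=\binom{n-2}{a-2}$ and $\binom{n-2}{a}=\binom{n-2}{b-2}$ are nonzero. Using $\binom{m+1}{j}=\frac{m+1}{m+1-j}\binom{m}{j}$ we get
\[
\binom{n-1}{b}=\frac{n-1}{a-1}\binom{n-2}{b},\qquad \binom{n-1}{a}=\frac{n-1}{b-1}\binom{n-2}{a}.
\]
Dividing both sides by the positive quantity $\binom{n-2}{b}\binom{n-2}{a}$, the claimed inequality becomes equivalent to
\[
(n-1)^2\leq (n+2)(a-1)(b-1).
\]

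\emph{The degenerate case $a=1$.} Here $\binom{n-2}{b}=\binom{b-1}{b}=0$, so the right-hand side vanishes. The left-hand side is $\binom{b}{b}\binom{b}{1}=b>0$, so the inequality fails. This explains the exception $a=1$, $b\geq 2$. The same computation also makes $a=b=1$ fail, so I would point out that the case $(1,1)$ should be included among the exceptions as well.

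\emph{The reduced inequality for $a\geq 2$.} Set $x=a-1$, $y=b-1$ and $s=x+y$, so that $1\leq x\leq y$, $n-1=s+1$ and $n+2=s+4$. The inequality reads
\[
(s+1)^2\leq (s+4)\,xy.
\]
\begin{itemize}
\item For $x=1$ (that is, $a=2$), we have $s=y+1$. The inequality becomes $(y+2)^2\leq (y+5)y$, which simplifies to $y\geq 4$, i.e. $b\geq 5$. This accounts exactly for the exceptions $(2,2),(2,3),(2,4)$; the case $b=5$ gives equality, $90=90$.
\item For $x\geq 2$ (that is, $a\geq 3$, hence $s\geq 4$), the function $x(s-x)$ on $2\leq x\leq s/2$ is minimised at $x=2$. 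Therefore $xy\geq 2(s-2)$, and it suffices to check
\[
2(s+4)(s-2)\geq (s+1)^2, \quad\text{i.e.}\quad s^2+2s-17\geq 0.
\]
This holds for every $s\geq 4$.
\end{itemize}

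\emph{Main obstacle.} The computation is routine. The only delicate point is locating the exceptional set precisely: checking the boundary equality at $(a,b)=(2,5)$, and noticing that $(1,1)$ also violates the inequality.
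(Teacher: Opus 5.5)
Your proof is correct and follows the paper's route. Both reduce to $(a+b-1)^2\le (a+b+2)(a-1)(b-1)$ and then split into cases on $a$. You merge the paper's separate treatments of $a=3$ and $a\ge 4$ into a single step by minimizing $x(s-x)$. You also handle $a=1$ directly from $\binom{b-1}{b}=0$, instead of through the reduced inequality, which is not equivalent when $a=1$.

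Your remark on $(a,b)=(1,1)$ is also right. There the left side is $1$ and the right side is $0$, so $(1,1)$ belongs among the exceptions. The paper's assertion that the inequality holds for $a=b=1$ is a slip. Your correction agrees with the paper's main theorem for $n=2$, which gives codimension $1$ for $\C^2\otimes\C^2\otimes\C^4$.
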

\begin{proof}

Suppose first that $b\geq a\geq 4$. The inequality we have to establish is 
\begin{equation}\label{ineq k=2}
(b-1)(a-1)(a+b+2)\geq (a+b-1)^2.
\end{equation}
Since $(a+b+2)>(a+b-1)$, it is enough to check 
$(a-1)(b-1)\geq (a+b-1)$. 
We may write this as 
\[
(a-2)b-2a+2\geq 0,
\]
which is satisfied for $b\geq a\geq 4$.

When $a=1$,
the inequality is only satisfied for $a=b=1$. When $a=2$, 
the inequality becomes
\[
(b-1)(b+4)\geq (b+1)^2,
\]
which is satisfied for $b\geq 5$. If $a=3$, the inequality becomes
\[
2(b-1)(b+5)\geq (b+2)^2, 
\]
which holds whenever $b\geq 4$. Moreover, the original inequality is also satisfied if $b=3$. 
\end{proof}

\begin{Corollary}
    Let $T\in \C^{a+1}\otimes C^{b+1}\otimes \C^{a+b+2}$, assume $b\geq a\geq 2$ then $\langle Z_T\rangle \subsetneq \PP(H_T)$ if $(a,b)\in \{(2,2),(2,3),(2,4))\}$.
\end{Corollary}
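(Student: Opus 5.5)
The corollary should follow by combining \Cref{prop: dim k=3 alpha} with the preceding Lemma. By \Cref{prop: dim k=3 alpha}, for $a,b\geq 2$ and $T$ general, the codimension of $\langle Z_T\rangle$ in $\PP(H_T)$ equals $\dim\ker(\alpha_T)$. Here $\alpha_T\colon H^n(\sE^{(n+1)})\to H^n(\sE^{(n)})$ with $n=a+b$. So it suffices to show that $\alpha_T$ cannot be injective for the three listed pairs. For this a dimension count is enough:
\[
\dim\ker(\alpha_T)\geq h^n(\sE^{(n+1)})-h^n(\sE^{(n)}).
\]

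For $k=2$, the identifications in the Lemma preceding \Cref{lemma: spelloutalpha} give
\[
h^n(\sE^{(n+1)})=\dim \sS^{b-1}\C^{a+1}\cdot\dim\sS^{a-1}\C^{b+1}=\binom{a+b-1}{a}\binom{a+b-1}{b}
\]
and
\[
h^n(\sE^{(n)})=(a+b+2)\binom{a+b-2}{a}\binom{a+b-2}{b}.
\]
These are exactly the two quantities compared in the preceding Lemma. That Lemma says the domain has dimension at most the codomain except when $a=1$, or when $(a,b)\in\{(2,2),(2,3),(2,4)\}$. I should make sure the exceptional cases there give strict failure of the inequality, not just that the Lemma's proof was inconclusive.

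So I will verify strictness directly for each pair, using the reduced form $(b-1)(a-1)(a+b+2)$ versus $(a+b-1)^2$:
\begin{itemize}
\item $(2,2)$: $1\cdot1\cdot 6=6<9$;
\item $(2,3)$: $2\cdot 1\cdot 7=14<16$;
\item $(2,4)$: $3\cdot1\cdot 8=24<25$.
\end{itemize}
In each case the domain is strictly larger than the codomain. Hence $\ker\alpha_T\neq 0$, so the codimension is positive and $\langle Z_T\rangle\subsetneq\PP(H_T)$.

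The only delicate point is that \Cref{prop: dim k=3 alpha} applies, which needs $a,b\geq 2$; this is covered by the hypothesis. There is no real obstacle beyond checking these three small arithmetic cases. As a byproduct the argument gives lower bounds on the codimension: $3$, $2$ and $1$ for $(2,2)$, $(2,3)$ and $(2,4)$ respectively, taken from the unreduced binomial counts.
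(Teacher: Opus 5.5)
Your argument is correct and follows the paper's route. The codimension equals $\dim\ker(\alpha_T)$ by \Cref{prop: dim k=3 alpha}, and for the three listed pairs the domain of $\alpha_T$ is strictly larger than its codomain, so rank--nullity forces a nonzero kernel.

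One slip in your closing remark: the unreduced counts give $9-6=3$, $24-21=3$ and $50-48=2$. So the lower bounds are $3,3,2$, which match the codimensions predicted in \Cref{conj:2}. Your values $3,2,1$ are instead the differences $(a+b-1)^2-(a-1)(b-1)(a+b+2)$ of the reduced expressions.
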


\subsection{Case $k\geq 3$}
Suppose first that $n_j=a$ for all $1\leq j\leq k$. Then the inequality to verify becomes 
\begin{equation}\label{ineq n_j=a}
[(k-1)a-1]^{k}(ka+2)\geq (ka-1)^k
\end{equation}
We know already the cases when the inequality holds for $k=2$ and $a\geq 1$. 

\begin{Lemma}\label{all equal factors}
The inequality $[(k-1)a-1]^{k}(ka+2)\geq (ka-1)^k$ holds
for any $k\geq 3$ and $a\geq 2$. 
\begin{proof}
Since $(ka+2)>(ka-1)$ the statement is proven once 
we prove that $[(k-1)a-1]^{k}\geq (ka-1)^{k-1}$. 
For any fixed $k\geq 3$, define 
\[
f(a)= k\log((k-1)a-1)-(k-1)\log(ka-1).
\]
The function $f$ is differentiable and its derivative 
is $f'(a)=\frac{k(k-1)a}{(ka-1)[(k-1)a-1)}$. Note that
$f'(a)\geq 0$ whenever $a\geq 2$, for any fixed $k\geq 3$. 
Hence for any fixed $k$ and for any $a\geq 2$, we have
\[
f(a)\geq f(2)=g(k) = (2k-3)^k - (2k-1)^{k-1}.
\]
We have to show that $g(k)\geq 0$ for any $k\geq 3$, 
to finish the proof. Note that $g(3)\geq 0$. It would be enough to check that $g$ is increasing. Now consider 
the derivative 
\[
g'(k) = \frac{2k}{2k-3}-\frac{2(k-1)}{2k-1}+\log(2k-3)-\log(2k-1).
\]
Thus $g'(k)$ is a differentiable function in the interval $[3,+\infty)$ and we calculate
\[
g''(k) = -\frac{4(4k^2-4k+3)}{(4k^2-8k+3)^2}<0,\ \ \forall k\geq 3.
\]
Thus $g'(k)$ is a decreasing function in $[3,+\infty)$ and we see that $\lim_{h\rightarrow +\infty} g'(h)=0$. This implies that $g'(k)\geq 0$ for any $k\geq 3$, which proves the statement. 
\end{proof}

\end{Lemma}

\begin{Remark}
Suppose $a=1$. Then for $k=3$, the inequality does not hold 
and gives the case $(2,2,2,5)$. For any $k\geq 4$ and $a=1$, one can check that the inequality \eqref{ineq n_j=a} is satisfied. 
\end{Remark}

\begin{Remark}
Thus, whenever $n_j=a\geq 1$ (for every $j$) and $k\geq 3$, the only exception to the inequality \eqref{ineq n_j=a} is when $n_1=n_2=n_3=1$. 
\end{Remark}

\subsection{General case: set-up}\label{subsectiongenineq}

Here we look at $k$-tuples as unordered collections of positive integers. 

Given the $k$-tuple $(n_1,\ldots, n_k)=(a,\ldots, a)$ as in the previous section, an $\ell$-increment of such a $k$-tuple is a $k$-tuple of the form
$(a+t_1,\ldots, a+t_{\ell}, a,\ldots, a)$, where $t_j\geq 1$. 

The $0$-increment of $(a,\ldots, a)$ is the tuple itself and it is clear that any tuple is an increment of some tuple of this form.

Suppose we start from the $\ell$-increment of the $k$-tuple $(a,\ldots, a)$.
Let $S=\sum_{i=1}^\ell t_i$ and $R_j = \sum_{i\neq j, i=1}^\ell t_i$. 

Then the inequality we have to prove is 
\begin{equation}\label{l increment}
\prod_{j=1}^{\ell} ((k-1)a-1+R_j)\cdot ((k-1)a-1+S)^{k-\ell}\geq (ka-1+S)^{k-1}. 
\end{equation}
Note that for $\ell=0$, $k\geq 3$ and $a\geq 2$ or $k\geq 4$ and $a=1$, this is established in Lemma \Cref{all equal factors}. 
The latter case is the base of our strong induction on $\ell$. Clearly, $0\leq \ell\leq k-1$. 
We are done if we prove that the validity of the inequality for the $\ell$-increment implies the validity of the inequality for the $(\ell+1)$-increment.
Thus, assume that
the inequality \eqref{l increment}
holds true and we would like to prove that 
\[
\prod_{j=1}^{\ell} ((k-1)a-1+R_j+t_{\ell+1})\cdot ((k-1)a-1+S+t_{\ell+1})^{k-\ell-1}((k-1)a+S))\geq (ka-1+S+t_{\ell+1})^{k-1}, 
\]
where $t_{\ell+1}\geq 1$. The right-hand side can be written as 
\[
\sum_{m=0}^{k-1} \binom{k-1}{m} (ka-1+S)^{k-1-m}t_{\ell+1}^m.
\]
Now, we look at the coefficient of $t^m_{\ell+1}$
on the left-hand side for $m\leq k-1$. This is given by 
\begin{equation}\label{coeff of LHS}
\sum_{r=0}^m \binom{k-\ell-1}{r}((k-1)a-1+S)^{k-\ell-r}\cdot \left(\sum_{J\subset [\ell], \ \ |J|=\ell-m-r} \prod_{f=1}^{\ell-m+r} ((k-1)a-1+R_{j_f})\right),
\end{equation}
where $J = \lbrace j_1,\ldots, j_{\ell-m+r}\rbrace\subset [\ell]$.
Each 
summand of \eqref{coeff of LHS} is of the form 
\[
\prod_{f=1}^{\ell-m+r} ((k-1)a-1+R_{j_f})\cdot ((k-1)a-1+S)^{k-\ell-r}.
\]
Since $\ell-m+r\leq \ell$, by strong induction, we have 
\[
\prod_{f=1}^{\ell-m+r} ((k-1)a-1+R_{j_f})\cdot ((k-1)a-1+S)^{k-\ell-r+m}\geq (ka-1+S)^{k-1}
\]
Since $(k-1)a-1+S<ka-1+S$, 
we have $\frac{1}{((k-1)a-1+S)^m}\geq\frac{1}{(ka-1+S)^m}$ for $m\geq 0$. Multiplying this inequality with the previous we obtain
\[
\prod_{f=1}^{\ell-m+r} ((k-1)a-1+R_{j_f})\cdot ((k-1)a-1+S)^{k-\ell-r}\geq (ka-1+S)^{k-1-m}.
\]
Thus our original inequality is proven if we prove that 
\[
\sum_{r=0}^m \binom{k-\ell-1}{r}\cdot \binom{\ell}{\ell-(m-r)}(ka-1+S)^{k-1-m} \geq \binom{k-1}{m}(ka-1+S)^{k-1-m}.
\]
Actually, this is an equality. 
\begin{Lemma}
One has
\[
\sum_{r=0}^m \binom{k-\ell-1}{r}\cdot \binom{\ell}{\ell-(m-r)}=\sum_{r=0}^m \binom{k-\ell-1}{r}\cdot \binom{\ell}{m-r} =\binom{k-1}{m},
\]
for any $0\leq r\leq m\leq k-1$ and any $0\leq \ell\leq k-1$. 
\begin{proof}
Let $X$ be a set of cardinality $k-1$
and let $X_m$ be the collection of all subsets of $X$ of cardinality $m$. Fix a subset $C\subset X$ of cardinality $\ell$.
Let $C_{m-r}$ be the collection of subsets of $C$ of cardinality $m-r$ and let $(X\setminus C)_{r}$ be the collection of subsets of $X$ disjoint from $C$ and with cardinality $r$. We have a bijection $X_m\rightarrow \cup_{r=0}^m C_{m-r}\times (X\setminus C)_{r}$
given by 
$A\mapsto (A\cap C, A\cap (X\setminus C))$, which shows the equality. 
\end{proof}
\end{Lemma}

This proves the inequality \eqref{l increment} for any $\ell$.  

\begin{Remark}
    The induction hypothesis in \Cref{all equal factors} excludes the case $(2,2,2,5)$, meaning it is necessary to verify the case $(2,a+1,b+1,a+b+2)$ where $a\geq b\geq 2$. The inequality becomes $(a+b)^3\leq (a+b+3)(a+b-1)ab$, so it is enough to verify $(a+b)^2\leq (a+b-1)ab$. Rearranging the terms we obtain $a^2(b-1)+b^2(a-1)-3ab\geq0$, and if $a\geq b\geq 3$, it simplifies to $2(a^2+b^2)-3ab\geq 0$ since $(a-b)^2\geq 0$.
    
    Two cases are missing, namely $b=1$ and $b=2.$ In the first the inequality to be verified is $(a+1)^3\leq(a+4)a^2$. This is equivalent to $1\leq a(a+3)$, which is satisfied for $a\geq 4$.
    
    If $b=2$, the inequality becomes $(a+2)^2\leq 2(a+5)(a+1)a$. The polynomial $2(a+5)(a+1)a-(a+2)^2$ has roots in the intervals $(-7,-6), (-2,-1), (1,2)$ and its leading coefficient is positive, therefore it is positive for all $a\geq2$.

  Therefore the only cases where the inequality is not satisfied for $k\geq 3$ are $(2,2,2,5)$, $(2,2,3,6)$, $(2,2,4,7)$.
\end{Remark}

\begin{Corollary}
    Let $V$ be a tensor space of format $(n_1+1,n_2+1, n_3+1, n+2)$, where $n=n_1+n_2+n_3$, and let $T\in V$ general. If $(n_1,n_2,n_3)\in \{(1,1,1), (1,1,2), (1,1,3)\}$, then $\langle Z_T\rangle \subsetneq \PP(H_T)$.
\end{Corollary}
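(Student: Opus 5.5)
The plan is to read this off from the dimension count of the previous subsections combined with \Cref{prop:dim k+1-factors}. All three formats $(n_1+1,n_2+1,n_3+1,n+2)$ with $n=n_1+n_2+n_3$ are $4$-factor tensor spaces exceeding the boundary format by one, so $k=3$. Hence \Cref{prop:dim k+1-factors} applies: for general $T$, the codimension of $\langle Z_T\rangle$ in $\PP(H_T)$ equals $\dim\ker(\alpha_T)$, where $\alpha_T\colon H^n(\sE^{(n+1)})\to H^n(\sE^{(n)})$. It therefore suffices to show that $\alpha_T$ cannot be injective. For this I only compare dimensions; the explicit form of $\alpha_T$ in \Cref{cor:spelloutalpha} is not needed.

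By the lemma identifying the source and target of $\alpha_T$ (through Serre duality and Bott's theorem), the domain is $\bigotimes_j \sS^{n-n_j-1}\C^{n_j+1}$ and the target is $\bigl(\bigotimes_j\sS^{n-n_j-2}\C^{n_j+1}\bigr)\otimes\C^{n+2}$. Their dimensions are
\[
\prod_{j=1}^3\binom{n-1}{n_j}\quad\text{and}\quad (n+2)\prod_{j=1}^3\binom{n-2}{n_j}.
\]
Evaluating these in the three cases gives:
\begin{itemize}
\item For $(1,1,1)$, with $n=3$, the domain has dimension $2^3=8$ and the target has dimension $5$.
\item For $(1,1,2)$, with $n=4$, the domain has dimension $3\cdot3\cdot3=27$ and the target has dimension $6\cdot 2\cdot 2\cdot 1=24$.
\item For $(1,1,3)$, with $n=5$, the domain has dimension $4\cdot4\cdot4=64$ and the target has dimension $7\cdot3\cdot3\cdot1=63$.
\end{itemize}
These are exactly the exceptional cases $(2,2,2,5)$, $(2,2,3,6)$, $(2,2,4,7)$ isolated in the preceding remark. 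In each case the domain is strictly larger than the target, so $\dim\ker(\alpha_T)\ge 1$. By \Cref{prop:dim k+1-factors}, $\langle Z_T\rangle$ then has positive codimension in $\PP(H_T)$, which gives $\langle Z_T\rangle\subsetneq\PP(H_T)$.

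The only step needing care is checking that \Cref{prop:dim k+1-factors} really applies to these small formats, where some $n_j=1$. Its proof relies on the vanishings in Propositions \ref{lemma: gen coh 1}, \ref{lemma: gen coh 2} and \ref{lemma: gen coh 3}, which were stated for $k+1\ge 4$ without excluding $n_j=1$. The point to confirm is that the chain in \Cref{coh: chains} and the identification $H^n(\sF_{n+1}({\bf 1}))\cong H^n(\sE^{(n+1)})$ still hold in these cases. Beyond that, the argument reduces to the arithmetic above.
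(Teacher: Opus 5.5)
Your proposal is correct and follows the paper's own route: the corollary comes from \Cref{prop:dim k+1-factors} combined with the dimension comparison of \Cref{sec:inequality}, whose final remark singles out exactly the formats $(2,2,2,5)$, $(2,2,3,6)$, $(2,2,4,7)$, and your counts $8>5$, $27>24$, $64>63$ are right. The applicability worry you raise is settled within the paper, since \Cref{prop:dim k+1-factors} is stated for every $k\ge 3$ with no restriction on the $n_j$.
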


\section{Conjectures}

\begin{Conjecture}\label{conj:1}
The map $\alpha_T$ has maximal rank, i.e. 
\[
\mathrm{rank}(\alpha_T)=\min\left\{\prod_{j=1}^k\binom{n-1}{n_j}, (n+2)\prod_{j=1}^k\binom{n-2}{n_j}\right\}.
\]
\end{Conjecture}

The minimum on the right-hand side is determined in \Cref{sec:inequality}. 
The validity of this \Cref{conj:1} implies the following two conjectures for $3$ and $k+1\geq 4$ factors. 

\begin{Conjecture}\label{conj:2}
Let $T$ be a general tensor of format $(a+1,b+1,n+2)$, where $n=a+b$, and $a,b\geq2$. 
Then $\langle Z_T\rangle = \PP(H_T)$, except in the cases  $(3,3,6), (3,4,7),(3,5,8)$, where $\langle Z_T\rangle\subsetneq \PP(H_T)$
has codimension $\binom{a+b-1}{a}\binom{a+b-1}{b}-\binom{a+b-2}{a}\binom{a+b-2}{b}(a+b+2)$. 
\end{Conjecture}

\begin{Conjecture}\label{conj:3}
Let $n=\sum_{j=1}^kn_j$ and consider a $(k+1)$-tensor space $V$ of format $(n_1+1,\dots,n_k+1,n+2)$ and $T\in V$ a general tensor. If $k\geq 3$, then $\langle Z_T\rangle=\PP(H_T)$ except in the cases $(2,2,2,5)$, $(2,2,3,6)$, $(2,2,4,7)$. In these latter cases, $\langle Z_T\rangle\subsetneq \PP(H_T)$ has codimension 
$\prod_{j=1}^k\binom{n-1}{n_j}-(n+2)\prod_{j=1}^k\binom{n-2}{n_j}$.
\end{Conjecture}

\section{An interpretation of the map $\alpha_T$ in terms of Koszul cohomology}\label{sec:Koszul coh approach}

In this final and very short section, we explain the connections of Conjectures \ref{conj:2} and \ref{conj:3} to Koszul cohomology. For the ease of notation, we consider the $3$-factor case. Let $W$ be a vector space of dimension $m=a+b+2$, and let $A = \C^{a+1}$ and $B = \C^{b+1}$. Then a tensor $T\in W^*\otimes A\otimes B$ may be seen as a linear map
$W\rightarrow A\otimes B$. Since $T$ is generic, this linear map is injective and we may regard $W$ as a linear series $W\subset |\sO_X(1,1)|$, where $X = \PP(A)\times \PP(B)=\PP^a\times \PP^b$. Since $m>\dim(X)$, one has that the linear series $W$ is base-point free, which means that there exists a surjection
\begin{equation}\label{baseptfree}
W\otimes \sO_X\longrightarrow \sO_X(1,1)\longrightarrow 0. 
\end{equation}
Twisting this map of locally free  sheaves with $\sO_X(b-2,a-2)$ and taking global sections we obtain the dual of the desired map $\alpha_T$ between vector spaces. 

Consider the Koszul complex associated to the surjection \eqref{baseptfree} (upon twisting by $\sO_X(b-2,a-2)$). This is the complex $\sF_{\bullet}$, where
\[
\sF_i = \wedge^i W\otimes \sO_X(b-1-i,a-1-i), \mbox{ for }\ \  i=0,\ldots, m. 
\]
In this notation, recall that the map we are interested in is the dual of the map $\delta_1:H^0(\sF_1)\longrightarrow H^0(\sF_0)$. 
Since the complex $\sF_{\bullet}$ is exact, its hypercohomology 
$\mathbb{H}^{*}(\sF_{\bullet})=0$. Letting $E_1^{p,q}=H^q(\sF_p)$, the latter condition is equivalent to the vanishing $E^{p,q}_{\infty}=0$ for every $p,q\in \Z$. 
Consider now the following 3-complex
\begin{equation}\label{3complex}
H^{m-2}(\sF_m)\stackrel{\delta_m}{\longrightarrow}H^{m-2}(\sF_{m-1})\stackrel{\delta_{m-1}}{\longrightarrow}H^{m-2}(\sF_{m-2}).
\end{equation}
Note that the cohomology $H^i(\sF_j)$ vanishes for every $0<j<m-2=a+b$ and every $i$. Thus, on page $E_{a+b+1}=E_{m-1}$, we obtain a map 
\[
d^{m-1,m-2}_{m-1}: \ker(\delta_{m-1})/\mathrm{Im}(\delta_m)\longrightarrow \mathrm{coker}(\delta_1),
\]
where the left-hand side is the homology of the $3$-complex \eqref{3complex} and $\delta_1$ is the map on global sections introduced before. Since $E_{\infty}=0$, $d^{m-1,m-2}_{m-1}$ is an isomorphism. 
Now note that the $3$-complex \eqref{3complex} is dual to 
\[
\wedge^2 W\longrightarrow W\otimes (A\otimes B)\longrightarrow \sS^2 A\otimes \sS^2 B, 
\]
whose middle homology is $\mathrm{Tor}_1^S(R,\C)_{(2,2)}$, where 
$S = \sS^{\bullet}(W)$ and $R=\bigoplus_{d\geq 0} \sS^d A\otimes \sS^d B$ is the coordinate ring of the Segre variety $X$ and is naturally an $S$-module. The latter $\mathrm{Tor}$ $S$-module is by definition the Koszul cohomology module
$\mathcal K_{1,1}(X,\sO(1,1),W)$ \cite{Green84}.  

Choose a subspace $H\subset W$ of dimension $a+b+1=\dim(X)+1$ and define $\overline{S}=S/(H)\cong \C[z]$. Hence $\overline{R}=R/(H)$ is an Artinian reduction of $R$. One has the following isomorphism of syzygy modules
\[
\mathrm{Tor}^S_1(R,\C)_{(2,2)}\cong\mathrm{Tor}^{\overline{S}}_1(\overline{R},\C)_2=\ker(\overline{R}_1\stackrel{\cdot z}{\longrightarrow}
\overline{R}_2),
\]
where the last equality follows simply from the fact that the graded resolution 
of $\C$ over $\overline{S}$ is $\overline{S}(-1)\rightarrow \overline{S}$ and 
 $\mathrm{Tor}^{\overline{S}}_1(\overline{R},\C)_2$ is by definition the kernel 
 of the map $\overline{R}\otimes \overline{S}(-1)\cong \overline{R}(-1)\rightarrow \overline{R}\cong \overline{R}\otimes \overline{S}$ restricted in degree $2$.

\bibliographystyle{alpha}
\bibliography{bibliography}

@article{DHOST,
    AUTHOR = {Draisma, Jan and Horobe\c{t}, Emil and Ottaviani, Giorgio and
              Sturmfels, Bernd and Thomas, Rekha R.},
     TITLE = {The {E}uclidean distance degree of an algebraic variety},
   JOURNAL = {Found. Comput. Math.},
  FJOURNAL = {Foundations of Computational Mathematics. The Journal of the
              Society for the Foundations of Computational Mathematics},
    VOLUME = {16},
      YEAR = {2016},
    NUMBER = {1},
     PAGES = {99--149},
      ISSN = {1615-3375},
   MRCLASS = {14P05 (14N10 14Q15 90C26)},
  MRNUMBER = {3451425},
MRREVIEWER = {Nicolas Dutertre},
       DOI = {10.1007/s10208-014-9240-x},
       URL = {https://doi.org/10.1007/s10208-014-9240-x},
}

@article {FO,
    AUTHOR = {Friedland, Shmuel and Ottaviani, Giorgio},
     TITLE = {The number of singular vector tuples and uniqueness of best
              rank-one approximation of tensors},
   JOURNAL = {Found. Comput. Math.},
  FJOURNAL = {Foundations of Computational Mathematics. The Journal of the
              Society for the Foundations of Computational Mathematics},
    VOLUME = {14},
      YEAR = {2014},
    NUMBER = {6},
     PAGES = {1209--1242},
      ISSN = {1615-3375},
   MRCLASS = {15A69 (15A18 57R20 65F15 65H10 65K05)},
  MRNUMBER = {3273677},
MRREVIEWER = {Zhening Li},
       DOI = {10.1007/s10208-014-9194-z},
       URL = {https://doi.org/10.1007/s10208-014-9194-z},
}

@INPROCEEDINGS{L,
    author={ {Lek-Heng Lim}},
    booktitle={1st IEEE International Workshop on Computational Advances in Multi-Sensor Adaptive Processing, 2005.},
    title={Singular values and eigenvalues of tensors: a variational approach},   year={2005},
    volume={},
    number={},
    pages={129-132},
    doi={10.1109/CAMAP.2005.1574201},
    }

@article{turatti2021tensors,
author = {Turatti, Ettore},
title = {On Tensors That Are Determined by Their Singular Tuples},
journal = {SIAM Journal on Applied Algebra and Geometry},
volume = {6},
number = {2},
pages = {319-338},
year = {2022},
}

@book {GKZ,
    AUTHOR = {Gel'fand, Israel M. and Kapranov, Mikhail M. and Zelevinsky, Andrey V.},
     TITLE = {Discriminants, resultants, and multidimensional determinants},
    SERIES = {Mathematics: Theory \& Applications},
 PUBLISHER = {Birkh\"{a}user Boston, Inc., Boston, MA},
      YEAR = {1994},
     PAGES = {x+523},
      ISBN = {0-8176-3660-9},
   MRCLASS = {14N05 (13D25 14M25 15A69 33C70 52B20)},
  MRNUMBER = {1264417},
MRREVIEWER = {I. Dolgachev},
       DOI = {10.1007/978-0-8176-4771-1},
       URL = {https://doi.org/10.1007/978-0-8176-4771-1},
}

@article {DOT,
    AUTHOR = {Draisma, Jan and Ottaviani, Giorgio and Tocino, Alicia},
     TITLE = {Best rank-{$k$} approximations for tensors: generalizing {E}ckart-{Y}oung},
   JOURNAL = {Res. Math. Sci.},
  FJOURNAL = {Research in the Mathematical Sciences},
  volume={5},
  number={2},
  pages={27},
  year={2018},
      ISSN = {2522-0144},
   MRCLASS = {15A69 (15A18 15A72)},
  MRNUMBER = {3805895},
MRREVIEWER = {Gustavo Adolfo Mu\~{n}oz-Fern\'{a}ndez},
       DOI = {10.1007/s40687-018-0145-1},
       URL = {https://doi.org/10.1007/s40687-018-0145-1}
}

@article {OP,
    AUTHOR = {Ottaviani, Giorgio and Paoletti, Raffaella},
     TITLE = {A geometric perspective on the singular value decomposition},
   JOURNAL = {Rend. Istit. Mat. Univ. Trieste},
  FJOURNAL = {Rendiconti dell'Istituto di Matematica dell'Universit\`a di
              Trieste. An International Journal of Mathematics},
    VOLUME = {47},
      YEAR = {2015},
     PAGES = {107--125},
      ISSN = {0049-4704},
   MRCLASS = {15-02 (14N05 15A23 15A69)},
  MRNUMBER = {3456581},
MRREVIEWER = {Julio Ben\'{\i}tez},
       DOI = {10.13137/0049-4704/11222},
       URL = {https://doi.org/10.13137/0049-4704/11222}
}

@book{Weyman, place={Cambridge}, series={Cambridge Tracts in Mathematics}, title={Cohomology of Vector Bundles and Syzygies}, publisher={Cambridge University Press}, author={Weyman, Jerzy}, year={2003}, collection={Cambridge Tracts in Mathematics}}

@book {Ottaviani95,
    AUTHOR = {Ottaviani, Giorgio},
     TITLE = {Rational homogeneous varieties},
PUBLISHER = {people.dimai.unifi.it/ottaviani/rathomo/rathomo.pdf},
      YEAR = {1995},
     PAGES = {70},
}

@article {ST22,
    AUTHOR = {Sodomaco, Luca and  Turatti, Ettore Teixeira},
     TITLE = {The span of singular tuples of a tensor beyond the boundary
              format},
  JOURNAL = {J. Symbolic Comput.},
    VOLUME = {120},
      YEAR = {2024},
     PAGES = {Paper No. 102230, 19}
}

@article{HT25,
title = {When does subtracting a rank-one approximation decrease tensor rank?},
journal = {Linear Algebra and its Applications},
volume = {709},
pages = {397-415},
year = {2025},
author = {Emil Horobe\c{t} and Ettore Teixeira Turatti},
}

@article{RHST25,
author = {Ribot, {A}lvaro and Horobe\c{t}, Emil and Seigal, Anna and Turatti, Ettore T.},
title = {Decomposing Tensors via Rank-One Approximations},
journal = {SIAM Journal on Matrix Analysis and Applications},
volume = {47},
number = {1},
pages = {282-307},
year = {2026}
}

@article{RSZ,
      title={Orthogonal eigenvectors and singular vectors of tensors}, 
      author={Alvaro Ribot and Anna Seigal and Piotr Zwiernik},
      year={2025},
      journal={arXiv:2506.19009},
}

@article{WPKS,
    title={Multi-subspace power method for decomposing all tensors},
    author={Kexin Wang and Jo{\~{a}}o M. Pereira and Joe Kileel and Anna Seigal},
    year={2025},
    journal={arXiv:2510.18627},

}

@article {Green84,
    AUTHOR = {Green, Mark L.},
     TITLE = {Koszul cohomology and the geometry of projective varieties},
      NOTE = {With an appendix by Robert Lazarsfeld and Green},
   JOURNAL = {J. Differential Geom.},
  FJOURNAL = {Journal of Differential Geometry},
    VOLUME = {19},
      YEAR = {1984},
    NUMBER = {1},
     PAGES = {125--171},
      ISSN = {0022-040X,1945-743X},
   MRCLASS = {14F05 (14B12)},
  MRNUMBER = {739785},
MRREVIEWER = {G.\ Horrocks},
       URL = {http://projecteuclid.org/euclid.jdg/1214438426},
}

\end{document}